\documentclass[a4paper,12pt]{article}
\usepackage[utf8]{inputenc}
\usepackage{amsmath}
\usepackage{amsfonts}
\usepackage{amssymb}
\usepackage{amsthm}
\usepackage[english]{babel}
\usepackage{fontenc}
\usepackage{graphicx}
\usepackage{mathtools}
\usepackage{accents}
\usepackage[normalem]{ulem}
\mathtoolsset{showonlyrefs}
\usepackage{dsfont}
\usepackage[pdftex,bookmarks=true,pdfborder={0 0 0}, breaklinks]{hyperref}
\usepackage[numbers,sort]{natbib}
\usepackage[hmargin=2.5cm,vmargin=2cm,paper=a4paper]{geometry}
\newcommand{\ls}{\leqslant}
\newcommand{\gs}{\geqslant}

\newcommand{\R}{\mathbb R}
\newcommand{\C}{\mathcal{C}}
\newcommand{\per}{\operatorname{Per}}

\newcommand{\supp}{\operatorname{supp}}
\DeclareMathOperator*{\esssup}{ess\,sup}
\DeclareMathOperator*{\essinf}{ess\,inf}
\newcommand{\eps}{\varepsilon}
\newcommand{\BV}{\mathrm{BV}}
\newcommand{\D}{{\widehat D}}
\newcommand{\Hm}{{\mathcal{H}}}
\newcommand{\f}{\widehat f}
\renewcommand{\u}{\widehat u}
\newcommand{\Om}{\widehat \Omega}
\newcommand{\Xih}{\widehat \Xi}
\newcommand{\otheta}{\overline \theta}
\newcommand{\dd}{\, \mathrm{d}}
\newcommand{\Id}{\operatorname{Id}}
\newcommand{\TV}{\operatorname{TV}}
\newcommand{\1}{\mathds 1}
\newcommand{\eqalpha}{\underline{\alpha}}

\newcommand{\scal}[2]{ \left \langle #1, \, #2 \right \rangle}
\newcommand\restr[2]{{
\left.\kern-\nulldelimiterspace #1 \vphantom{\big|} \right|_{#2} 
}}

\DeclareMathOperator*{\argmin}{arg\,min}

\newtheorem{theorem}{Theorem}
\newtheorem{prop}{Proposition}
\newtheorem{cor}{Corollary}

\newtheorem{lemma}{Lemma}
\theoremstyle{definition}

\newtheorem{remark}{Remark}
\newtheorem{example}{Example}
\usepackage{color}
\usepackage[dvipsnames]{xcolor}
\usepackage{comment}

\begin{document}
\title{Convergence of level sets in fractional Laplacian regularization\footnotetext{2020 Mathematics Subject Classification: 35R11, 47A52, 68U10, 35B51.}}
\author{Jos\'e A. Iglesias\thanks{Department of Applied Mathematics, University of Twente, Enschede, The Netherlands \newline(\texttt{jose.iglesias{@}utwente.nl})} , Gwenael Mercier\thanks{Computational Science Center, University of Vienna, Austria (\texttt{gwenael.mercier{@}univie.ac.at})}}
\date{}
\maketitle
\begin{abstract}
The use of the fractional Laplacian in image denoising and regularization of inverse problems has enjoyed a recent surge in popularity, since for discontinuous functions it can behave less aggressively than methods based on $H^1$ norms, while being linear and computable with fast spectral numerical methods. In this work, we examine denoising and linear inverse problems regularized with fractional Laplacian in the vanishing noise and regularization parameter regime. The clean data is assumed piecewise constant in the first case, and continuous and satisfying a source condition in the second. In these settings, we prove results of convergence of level set boundaries with respect to Hausdorff distance, and additionally convergence rates in the case of denoising and indicatrix clean data. The main technical tool for this purpose is a family of barriers constructed by Savin and Valdinoci for studying the fractional Allen-Cahn equation. To help put these fractional methods in context, comparisons with the total variation and classical Laplacian are provided throughout.
\end{abstract}

\section{Introduction}
In this work, and within the context of multidimensional data like natural images or recovered material parameters, we are interested in variational regularization approaches of the form
\begin{equation}\label{eq:introfunc}\min_u \|Au - f\|^2_{\Hm}+\alpha |u|^2_{H^s},\end{equation}
where $|u|^2_{H^s}$ is a fractional order Sobolev seminorm (see \eqref{eq:complementformula} below), $A$ is a linear operator and $\Hm$ is a Hilbert space containing the measurements. Owing to the fractional Laplacian appearing in its Euler-Lagrange equation, we refer to \eqref{eq:introfunc} as \emph{fractional Laplacian regularization}. Although it has a nonlocal character, the weights involved are singular and therefore are biased toward short range interactions. This is in contrast to patch-based methods designed to exploit long-range similarities across the image (see \cite{BuaColMor05, GilOsh08} for some prototypical examples).

Our focus is instead on the use of fractional order seminorms for a moderate amount of smoothing, and its compatibility with less regular inputs and solutions. Unlike other linear regularization methods like basic Tikhonov or $H^1$ seminorm regularization, one might claim that fractional Laplacian regularization with low order could be well adapted to images with distinct objects and discontinuities. This is the point of view adopted in works like \cite{AntBar17, AntDiKha20} and \cite{GatHes15}, mostly from a numerical perspective, in particular because using spectral methods could be extremely fast for such a problem. For example, in \cite{AntBar17} the inclusion \begin{equation}\label{eq:bvfractionalinclusion}\BV(\mathbb T^d) \cap L^\infty(\mathbb T^d) \subset H^s(\mathbb T^d)\end{equation} for $s<1/2$ (with $\mathbb T^d$ being the $d$-dimensional torus) is taken as a concrete argument in this direction, since the space $\BV$ of functions of bounded variation is the most important framework for problems with discontinuous solutions still allowing modelling using derivatives.

\begin{figure}[htb]
     \begin{center}
     \mbox{} 
     \hfill
	 \raisebox{-0.5\height}{\includegraphics[width=0.305\textwidth]{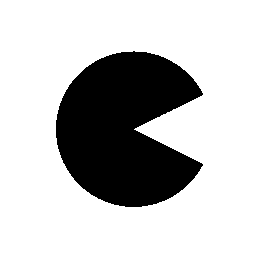}}
     \hspace{0.25cm}
	 \raisebox{-0.5\height}{\includegraphics[width=0.305\textwidth]{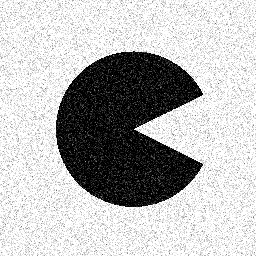}}
	 \hfill
	 \mbox{}
	 \\
	 \vspace{0.4cm}
	 \mbox{} 
     \hfill
	 \raisebox{-0.5\height}{\includegraphics[width=0.305\textwidth]{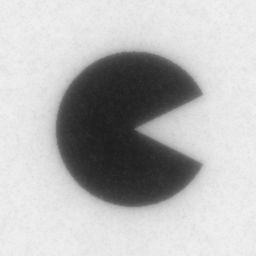}}
     \hspace{0.25cm}
	 \raisebox{-0.5\height}{\includegraphics[width=0.305\textwidth]{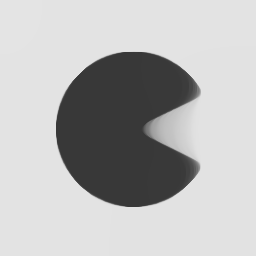}}
     \hspace{0.25cm}
	 \raisebox{-0.5\height}{\includegraphics[width=0.305\textwidth]{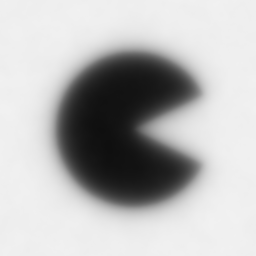}}
	 \hfill
	 \mbox{}
     \end{center}
     \caption{Oversmoothed denoising of a characteristic function. Upper row: clean and noisy images. Bottom row: Result with fractional Laplacian and $s=0.49$, with total variation, and with classical Laplacian. The $\TV$ result has been computed with the PDHG algorithm \cite{ChaPoc11} and finite differences, the others in the spectral domain with periodic boundary conditions by FFT. The large regularization parameters have been chosen so that $\text{PSNR}=16\pm0.01 \text{ dB}$ in all cases. In this case the fractional method gives a relatively sharp result, while being $\approx 10^3$ times faster than $\TV$.}\label{fig:pacman}
\end{figure}

In this work we explore this claim from an analytical point of view and beyond the spaces where the functionals may be defined, by considering geometric properties of the solutions arising from this type of regularization. In particular, we seek to answer questions like ``When denoising a characteristic function corrupted by noise, how strong is the smearing of edges caused by fractional Laplacian regularization?''. Figure \ref{fig:pacman} depicts an oversmoothed numerical example where the effect on edges of fractional Laplacian regularization can be seen easily, and also compared to total variation and $H^1$ regularization corresponding to the usual Laplacian.

To be able to give precise analytical answers, we turn to the low noise regime with vanishing regularization parameter, and formulate such results in terms of the Hausdorff distance between level set boundaries, which may be seen as uniform convergence of the objects in the images. This type of convergence is already known for $\TV$ regularization under various assumptions \cite{ChaDuvPeyPoo17, IglMerSch18, IglMer20, IglMer21} and, somewhat surprisingly, it is not only also true for fractional Laplacian regularization (see Theorem \ref{thm:hausmaxprinc} below), but holds as well for $H^1$ regularization. Where we do find a difference is in terms of convergence rates (Theorem \ref{thm:convrates}), since our proof depends on inclusions of the type \eqref{eq:bvfractionalinclusion}.

Beyond the case of piecewise constant ideal data and the recovery of jumps, we also consider the roughly opposite case of continuous ideal data. This allows us to study not only denoising but also regularization of linear inverse problems assuming the source condition and for a type of strongly smoothing operators, which in addition to being compact have their adjoint mapping continuously into $L^\infty$ (Theorem \ref{thm:hausmaxprincA}).

A limitation of our results is that we are forced to use noise belonging to $L^q$ with an exponent $q>2$  depending on the dimension and order of the regularization, or operators continuous on the dual space $L^p$ with $p=q'<2$. Roughly speaking, compared to a more standard $L^2$ situation, these assumptions mean that we force a stronger matching of observation and solution. Whether geometric results such as those proved here are possible without departing from a fully Hilbert space framework could be an interesting question for future work.

\subsection{Some notation} Within this article we work extensively with subsets of $\R^d$. In this setting, given $E, F \subset \R^d$ we use the notation $E^c := \R^d \setminus E$ for the complement, $\1_E:\R^d\to \R^+\cup \{0\}$ for the indicatrix taking the value $1$ on $E$ and $0$ on $E^c$, $|E|$ for the Lebesgue measure,
\[d(x,E) := \inf_{y \in E} |x-y|\]
for the distance from a point to a set, and 
\begin{equation}\label{eq:hausdorffdist}d_H(E,F):=\max\left(\sup_{x \in E} d(x,F), \ \sup_{y \in F} d(y,E)\right)\end{equation}
for the Hausdorff distance between two subsets. Moreover, we say that $E$ satisfies density estimates if there are some $C_E \in (0,1)$ and $r_0 \in (0,1)$ such that
\begin{equation}\label{eq:densityestA}\frac{|E \cap B(x,r)|}{|B(x,r)|}\gs C_E \text{ and }\frac{|B(x,r) \setminus E|}{|B(x,r)|}\gs C_E \text{ for all }r\ls r_0 \text{ and all } x \in \partial E.\end{equation}

\section{Fractional Laplacian regularization from a PDE perspective}
Throughout, we assume $\Omega\subset \R^d$ is a bounded Lipschitz domain satisfying the exterior ball condition, that is, there is some radius such that every point of $\partial \Omega$ can be touched with a ball of this radius contained in $\R^d\setminus \Omega$. Moreover, let $\mathcal H$ be a Hilbert space, $p \ls 2$ and $A: L^p(\Omega) \to \mathcal H$ a bounded linear operator. We want to invert $Au=f$ and minimize, for $n$ a given noise instance and $\alpha>0$ a regularization parameter, the functional
\begin{equation}\Vert Au - f -n \Vert_{\Hm}^2 + \alpha |u|_{H^s}^2 \label{eq:mingen} \end{equation}
among
\[u \in H^s_0(\overline{\Omega}) := \{ u: \R^d \to \R \, \mid \, \restr{u}{\R^d \setminus \Omega} = 0, \ |u|_{H^s} < + \infty \},\]
where the Gagliardo-Slobodeckij seminorm of (fractional) order $s\in (0,1)$ in $\R^d$ is defined as
\[|u|^2_{H^s} := |u|^2_{H^s(\R^d)} := \int_{\R^d} \int_{\R^d} \frac{|u(x)-u(y)|^2}{|x-y|^{d+2s}} \dd x \dd y,\]
where as noted we skip the domain $\R^d$ in the notation $|\cdot|_{H^s}$, since in the sequel, every fractional seminorm we use will be computed in the full space. Moreover, we also note that $H^s_0(\overline{\Omega})$ differs from $H^s_0(\Omega)$ defined as the closure of $\C^\infty_c(\Omega)$ in the $H^s(\Omega)$ topology, which we do not use in this article.

Now, the space $H^s_0(\overline{\Omega})$ is a Hilbert space (see \cite[Lem.~7]{SerVal12}, for example) with the inner product $\scal{u_1}{u_2}_{L^2} + \scal{u_1}{u_2}_{H^s}$ defined by
\[ \scal{u_1}{u_2}_{H^s} :=  \int_{\R^d} \int_{\R^d} \frac{ (u_1(x)-u_1(y) ) (u_2(x)-u_2(y) )}{|x-y|^{d+2s}} \dd x \dd y. \]
Since $u$ is constrained to vanish on the complement of $\Omega$, we have
\begin{equation}\label{eq:complementformula}\begin{aligned}|u|^2_{H^s} &= \iint_{(\R^d \times \R^d) \setminus (\Omega^c \times \Omega^c)} \frac{|u(x) - u(y)|^2}{|x-y|^{d+2s}} \dd x\dd y\\
&= \int_{\Omega}\int_{\Omega} \frac{|u(x) - u(y)|^2}{|x-y|^{d+2s}} \dd x\dd y \,+\, 2 \int_{\Omega} \int_{\Omega^c} \frac{|u(x)|^2}{|x-y|^{d+2s}} \dd x\dd y.\end{aligned}\end{equation}

We also recall (see \cite[Thm.~2.2.1]{BucVal16}, for example) the Sobolev inequality applied to $u \in H^s_0(\overline{\Omega})$,
\begin{equation}
\label{eq:sobineq}
    \Vert u \Vert_{L^{2d/(d-2s)}(\R^d)} \ls \Theta |u|_{H^s}.
\end{equation}

Let us remark that in the above definitions there is a peculiarity that is common when working with nonlocal equations: we are interested in functions supported in $\Omega$, but the $H^s$ norm we consider involves integrals over the whole $\R^d$, since the interaction kernel $1/|x-y|^{d+2s}$ is not compactly supported. For this reason, we make the convention that
\[L^p(\Omega) = \left\{ u \in L^p(\R^d) \,\middle\vert\, u=0 \text{ on }\R^d\setminus \Omega\right\},\]
noting that the identification by extension and restriction is clearly well-defined.

Let us now derive the Euler-Lagrange equation for this functional. We want to compare the energy of a minimizer $u_{\alpha,n}$ of \eqref{eq:mingen} with the energy of an admissible perturbation $u_{\alpha,n}+t h$ with $h\in H_0^s(\overline{\Omega})$ and $t \in \R$. This reads (we skip the indices for simplicity)
\begin{equation}\label{eq:perturbation}\Vert Au - f -n \Vert_{\Hm}^2 + \alpha \Vert u \Vert_{H^s}^2  \ls \Vert A(u+th) - f -n \Vert_{\Hm}^2 + \alpha \Vert u+th \Vert_{H^s}^2.\end{equation}
The right hand side writes
\[\Vert Au - f -n \Vert_{\Hm}^2 +t^2 \Vert Ah \Vert_{\Hm}^2 + 2t \scal{Au-f-n}{Ah}_{\Hm}+  \alpha \left( \Vert u \Vert_{H^s}^2 + t^2\Vert h \Vert_{H^s}^2 + 2t \scal{u}{h}_{H^s}  \right) \]
which implies, since \eqref{eq:perturbation} needs to be true for all $t$,
\[ \scal{Au-f-n}{Ah}_{\Hm} =-\alpha \scal{u}{h}_{H^s} \]
which means, writing explicitly the inner products and using the adjoint $A^\ast: \mathcal H \to (L^p(\Omega))'=L^q(\Omega)$, that for any $h \in L^p(\Omega)$ (we can write the first integral on the whole $\R^d$, since $h=0$ outside of $\Omega$)
\begin{equation}
    \int_{\R^d} A^\ast(f+n-Au) h = \alpha \iint_{\R^d \times \R^d} \frac{ (u(x)-u(y) ) (h(x)-h(y) )}{|x-y|^{d+2s}} \dd x \dd y.
    \label{eq:ELweak}
\end{equation}
Now, we notice that $H^1_0(\overline{\Omega}) \subset L^2(\Omega) \subset L^p(\Omega)$, and \eqref{eq:ELweak} holding for all $h \in H^1_0(\overline{\Omega})$ is defined to be (see \cite{LeoPerPriSor15, Ros16}, for example) the weak formulation of
\begin{equation}\label{eq:ELgen}\begin{cases}\eqalpha(-\Delta)^s u = A^\ast \left(f+n-Au\right) & \text{ on }\Omega \\ u = 0 & \text{ on } \Omega^c = \R^d \setminus \Omega.\end{cases}\end{equation}
Here, $(-\Delta)^s$ denotes the integral fractional Laplacian on $\R^d$, defined for $u$ regular enough (say in the Schwartz space of rapidly decaying $\mathcal{C}^\infty$ functions) as
\begin{equation}\label{eq:fraclap}\begin{gathered}(-\Delta)^s u(x) = C(d,s) \lim_{\delta \to 0}\int_{\R^d \setminus B(x, \delta)} \frac{u(x) - u(y)}{|x-y|^{d+2s}} \dd y, \text{ for }\\C(d,s):=\left( \int_{\R^d}\frac{1-\cos(y \cdot e_1)}{|y|^{d+2s}} \dd y \right)^{-1},\end{gathered}\end{equation}
for $e_1$ the first unit vector of the canonical basis of $\R^d$, and where we use the notation $\eqalpha := 2 C(d, s)^{-1} \alpha$. It might seem inconvenient that the factor $C(d,s)$ appears between the regularization energy \eqref{eq:mingen} and the fractional PDE \eqref{eq:ELgen}, but this constant (for which we use the conventions of \cite{DinPalVal12}) is necessary for a few reasons. First, it is required to maintain the relation of the operator $(-\Delta)^s$ with the classical Laplacian $-\Delta$, see \cite[Prop.~4.4]{DinPalVal12} for the pointwise limit $(-\Delta)^s u \to (-\Delta) u$ when $s \to 1$ and smooth $u$. Moreover, it is also required to maintain consistency with the definition in terms of Fourier multipliers, in which we have \cite[Prop.~3.3]{DinPalVal12} the relation
\[(-\Delta)^s u = \mathcal{F}^{-1}\left(|\xi|^{2s} \mathcal{F} u\right),\]
which also tells us \cite[Prop.~3.6]{DinPalVal12} that for any given $u \in L^2(\R^d)$, $(-\Delta)^s u \in L^2(\R^d)$ if and only if $u \in H^{2s}(\R^d)$. Moreover, we also have the identity
\begin{equation}\label{eq:parseval}\scal{u_1}{u_2}_{H^s} = 2\, C(d,s)^{-1} \scal{(-\Delta)^{s/2} u_1}{(-\Delta)^{s/2} u_2}_{L^2(\R^d)},\end{equation}
which allows us to interpret appearances of $\scal{\cdot}{\cdot}_{H^s}$ in convergence estimates in later sections. In particular, the two formulas above and the Plancherel theorem  yield for $u_1 \in H^s(\R^d)$ and $u_2 \in H^{2s}(\R^d)$ that
\begin{equation}\label{eq:intbyparts}\begin{aligned}\scal{u_1}{u_2}_{H^s} &= 2 C(d,s)^{-1}\! \big\langle (-\Delta)^{s/2} u_1, (-\Delta)^{s/2} u_2 \big\rangle_{L^2(\R^d)} \\ &= 2 C(d,s)^{-1}\! \scal{u_1}{(-\Delta)^{s} u_2}_{L^2(\R^d)},\end{aligned}\end{equation}
which further justifies speaking of \eqref{eq:ELweak} as a weak formulation.

In what follows, when speaking about solutions to \eqref{eq:ELgen}, we will always mean functions which satisfy the weak formulation \eqref{eq:ELweak}. Moreover, given functions $u,f$ defined on $\R^d$, we write for brevity
\begin{equation}(-\Delta)^s u + u = f \text{ in } \mathcal O \text{ for an open set }\mathcal{O}\end{equation}
whenever we have 
\begin{equation}\label{eq:laplacianlocal}\iint_{\R^d \times \R^d} \frac{ (u(x)-u(y) ) (h(x)-h(y))}{|x-y|^{d+2s}} \dd x \dd y = \int_{\R^d} (f-u) h \text{ for all }h \in H^s_0(\overline{\mathcal{O}}).\end{equation}
Notice that if $\mathcal{O} \subset \Omega$, by definition we have $H^s_0(\overline{\mathcal{O}}) \subset H^s_0(\overline{\Omega})$. This means that when $A=\Id$, a weak solution of \eqref{eq:ELgen} satisfies in particular that $(-\Delta)^s u = (f-u)/\alpha \text{ in } \mathcal O $ for all $\mathcal{O}\subset \Omega$.
We write similarly that 
\begin{equation}(-\Delta)^s u + u \ls f \text{ in } \mathcal O\end{equation}
whenever it holds that 
\begin{equation}\label{eq:sublaplacianlocal}\iint_{\R^d \times \R^d} \frac{ (u(x)-u(y) ) (h(x)-h(y))}{|x-y|^{d+2s}} \dd x \dd y \ls \int_{\R^d} (f-u) h \text{ if }h \in H^s_0(\overline{\mathcal{O}}) \text{ with } \, h\gs 0.\end{equation}

Notice that the expressions in both \eqref{eq:fraclap} and \eqref{eq:complementformula} contain contributions from outside the support of $u$. The combination of this unbounded domain of interaction and the singularity of the kernel gives rise to numerical challenges, particularly in comparison with the periodic case in which straightforward spectral methods are applicable. In any case, recent numerical works tackle the efficient computation of problems with the integral fractional Laplacian we consider here, see \cite{AntDonStr21} where image denoising by a Dirichlet problem of the form \eqref{eq:ELgen} with $A = \Id$ and its comparison to the periodic case are considered.

\subsection{Comparison principle}
Many of our results concern the particular case of denoising, in which the data and noise are assumed to belong to $L^2(\Omega)$ and $\Hm = L^2(\Omega)$ which allows for $A$ to be simply the identity. In this setting, let us now recall the following statement for the fractional Laplacian. Our proof follows that of \cite[Prop.~4.1]{Ros16}.
\begin{prop}[Weak maximum principle]
Let $\mathcal O$ be an open subset and $u \in H^s(\R^d)$ satisfy
\begin{equation}(-\Delta)^s u + u \ls 0 \text{ in } \mathcal O \label{eq:maxpple}\end{equation}
as well as $u \ls 0$ on $\R^d \setminus \mathcal O$.
Then, we also have $u \ls 0$ (a.e.) on $\mathcal O$.
\label{prop:maxpple}
\end{prop}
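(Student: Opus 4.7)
The plan is to test the sub-equation \eqref{eq:sublaplacianlocal} against the admissible function $h := u_+ = \max(u,0)$. To justify admissibility I would verify three facts: first, the hypothesis $u \ls 0$ on $\R^d \setminus \mathcal{O}$ gives $u_+ \equiv 0$ there, so $u_+$ is supported in $\overline{\mathcal{O}}$; second, because $t \mapsto t_+$ is $1$-Lipschitz on $\R$, the pointwise bound $|u_+(x) - u_+(y)| \ls |u(x) - u(y)|$ implies $|u_+|_{H^s} \ls |u|_{H^s} < +\infty$, so $u_+ \in H^s_0(\overline{\mathcal O})$; third, clearly $u_+ \gs 0$, as required by \eqref{eq:sublaplacianlocal}.

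Substituting $h = u_+$ with $f = 0$ into \eqref{eq:sublaplacianlocal} and using that $u \cdot u_+ = (u_+)^2$, one obtains
\begin{equation*}
\iint_{\R^d \times \R^d} \frac{(u(x)-u(y))(u_+(x)-u_+(y))}{|x-y|^{d+2s}} \dd x \dd y \,\ls\, -\int_{\R^d} (u_+)^2 \dd x.
\end{equation*}
The decisive step is then the elementary pointwise inequality
\begin{equation*}
(u(x)-u(y))(u_+(x)-u_+(y)) \gs (u_+(x) - u_+(y))^2 \quad \text{for all } x,y \in \R^d,
\end{equation*}
which, writing $u = u_+ - u_-$, reduces to $(u_+(x)-u_+(y))(u_-(x)-u_-(y)) \ls 0$; this last bound follows at once by case analysis from the fact that $u_+$ and $u_-$ have disjoint supports. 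Inserting this into the previous display yields $|u_+|_{H^s}^2 + \|u_+\|_{L^2(\R^d)}^2 \ls 0$, which forces $u_+ = 0$ a.e., i.e.\ $u \ls 0$ a.e.\ on $\mathcal{O}$.

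I anticipate no serious obstacle: the argument is the standard nonlocal analogue of the classical identity $\nabla u \cdot \nabla u_+ = |\nabla u_+|^2$, the only non-automatic point being the verification that $u_+$ is an admissible test function (which boils down to the contraction property of the positive part in the Gagliardo seminorm) and the sign inequality on the difference quotients, which is a short algebraic check.
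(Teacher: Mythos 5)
Your proof is correct and follows essentially the same route as the paper: testing the sub-equation with $h=u^+$, which lies in $H^s_0(\overline{\mathcal O})$ because $u\ls 0$ outside $\mathcal O$, and exploiting the sign of the cross term $(u^+(x)-u^+(y))(u^-(x)-u^-(y))\ls 0$ to conclude $|u^+|_{H^s}^2+\|u^+\|_{L^2}^2\ls 0$. The only difference is cosmetic: you spell out the admissibility of $u^+$ via the $1$-Lipschitz contraction property of $t\mapsto t^+$, which the paper asserts without detail.
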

\begin{proof} Because $u \in H^s(\R^d)$ and $u \ls 0$ on $\R^d \setminus \mathcal O$, we have that $u^+ := \max(u,0) \in H^s_0(\overline{\mathcal{O}})$, so we may use it in \eqref{eq:sublaplacianlocal} with $f=0$ to find
\[ \iint_{\R^d \times \R^d} \frac{ (u(x)-u(y) ) (u^+(x)-u^+(y) )}{|x-y|^{d+2s}} \dd x \dd y \ls- \int_{\R^d} u u^+.\]
Splitting $u = u^+ - u^-$, we obtain
\[\iint_{\R^d \times \R^d} \frac{ (u^+(x)-u^+(y) )^2}{|x-y|^{d+2s}}-\frac{ (u^-(x)-u^-(y) ) (u^+(x)-u^+(y) )}{|x-y|^{d+2s}} \dd x \dd y \ls- \int_{\R^d} (u^+)^2. \]
Now, we can notice that $ (u^-(x)-u^-(y) ) (u^+(x)-u^+(y) ) \ls 0 $ which implies that the left hand side of the last inequality is nonnegative. This forces $u^+=0$ a.e., that is $u\ls 0$ in the whole $\R^d$.
\end{proof}

\begin{cor}[Comparison principle]
Let $f,g$ be two $L^2$ functions with $f \ls g$ a.e.~on an open subset $\mathcal O\subset \R^d$ and $u$ and $v$ solutions of the respective equations
\[ (-\Delta)^s u + u = f \quad \text{and}\quad  (-\Delta)^s v + v = g \quad \text{ on } \mathcal O.\] 
If $u\ls v$ on $\R^d \setminus \mathcal O$, then this inequality also holds a.e.~on $\mathcal O$.
\label{cor:comppple}
\end{cor}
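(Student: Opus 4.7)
The plan is to reduce everything to the weak maximum principle (Proposition \ref{prop:maxpple}) via linearity, applied to the difference $w := u - v$. Since $u, v \in H^s(\R^d)$, so is $w$, and by hypothesis $w \ls 0$ on $\R^d \setminus \mathcal{O}$, so the maximum principle will apply once we show that $(-\Delta)^s w + w \ls 0$ in $\mathcal{O}$ in the sense of \eqref{eq:sublaplacianlocal}.

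The key step is to subtract the two weak formulations. Both the kernel integral appearing on the left of \eqref{eq:laplacianlocal} and the right-hand side $\int (f-u)h$ are linear in $u$ (respectively $v$), so for any $h \in H^s_0(\overline{\mathcal{O}})$ one has
\[\iint_{\R^d \times \R^d} \frac{(w(x)-w(y))(h(x)-h(y))}{|x-y|^{d+2s}} \dd x \dd y \,=\, \int_{\R^d}(f-g) h \,-\, \int_{\R^d} w h.\]
When additionally $h \gs 0$, the fact that $h$ vanishes outside $\mathcal{O}$ and $f - g \ls 0$ on $\mathcal{O}$ makes $\int_{\R^d}(f-g)h \ls 0$, so the right-hand side is bounded by $-\int_{\R^d} w h$. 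This is exactly the inequality \eqref{eq:sublaplacianlocal} for $w$ with right-hand side zero, i.e.\ $(-\Delta)^s w + w \ls 0$ in $\mathcal{O}$.

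Combined with $w \ls 0$ on $\R^d \setminus \mathcal{O}$, Proposition \ref{prop:maxpple} then yields $w \ls 0$ a.e.\ on $\mathcal{O}$, i.e.\ $u \ls v$ a.e.\ there. There is no real obstacle here: the only care needed is to remember that $h \in H^s_0(\overline{\mathcal{O}})$ vanishes outside $\mathcal{O}$, so that $\int_{\R^d}(f-g)h = \int_{\mathcal{O}}(f-g)h$ and only the sign of $f-g$ on $\mathcal{O}$ is used.
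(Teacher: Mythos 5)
Your argument is correct and is exactly the intended one: the paper states this as a corollary of Proposition \ref{prop:maxpple} without writing out the proof, and the subtraction of the two weak formulations for $w=u-v$, the sign observation $\int_{\mathcal{O}}(f-g)h\ls 0$ for $h\gs 0$, and the application of the weak maximum principle fill in that omitted argument precisely.
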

\subsection{Regularity for solutions of fractional Dirichlet problems}\label{sec:regularity}
A particularity of our formulation \eqref{eq:mingen} above, is that we have stepped out of a purely Hilbert space formulation where $A:L^2(\Omega) \to L^2(\Omega)$, which is both natural and commonly used, in particular in \cite{AntDiKha20} for regularization with fractional Laplacian. From now on, whenever $A$ appears (that is, when the problem considered is not a simple denoising), we will assume that it is defined instead on $L^p(\Omega)$ with \[p<\frac{d-2s}{d}<2.\] 
The reason for this are the following boundedness and regularity results, which hold only with right hand side with large enough integrability (in the dual space $L^q(\Omega)$). Needing $L^\infty$ control on the solutions is quite natural for the kind of results we want to prove, since we would like to work with level sets of the minimizers, and for that one should know at which values these level sets are to be examined. Let us remark that the situation in previous works proving convergence results for level sets in $\TV$ regularization is similar: it is proved in \cite{BreIglMer2X} that the assumptions used in \cite{ChaDuvPeyPoo17, IglMerSch18, IglMer21} all lead to $L^\infty$ estimates independent of $\alpha$.

We consider now the regularity of solutions of Dirichlet problems of the type
\begin{equation}\label{eq:fraclin}\begin{cases}(-\Delta)^s u + u = g &\text{ on }\Omega\\u= 0 &\text{ on }\R^d \setminus\Omega.\end{cases}\end{equation}
In what follows, we will repeatedly use the following boundedness result:
\begin{prop}\label{prop:boundedsol}
Let $g \in L^q$ with $q > d/2s$ and $u$ the weak solution to \eqref{eq:fraclin}. There exists a constant $C_S(\Omega,q,d,s)$ so that
\[ \Vert u \Vert_{L^\infty} \ls C_S(\Omega, q, d, s) \Vert g \Vert^{2d/(d-2s)}_{L^q(\Omega)}.\]
\end{prop}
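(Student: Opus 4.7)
The plan is to adapt the classical De~Giorgi--Stampacchia truncation argument to the fractional setting, and to trace where the hypothesis $q>d/(2s)$ enters. For each level $k \gs 0$, set $A(k):=\{u>k\}$ and $w_k := (u-k)^+$. Since $u$ vanishes outside $\Omega$, we have $w_k \in H^s_0(\overline{\Omega})$, so $w_k$ is an admissible test function in the weak formulation \eqref{eq:laplacianlocal}. Using the pointwise inequality $(u(x)-u(y))(w_k(x)-w_k(y))\gs (w_k(x)-w_k(y))^2$ together with $u\, w_k = w_k^2 + k\, w_k \gs 0$, this test produces
\[
|w_k|_{H^s}^2 \ls \int_{A(k)} g\, w_k.
\]

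The next step couples Hölder with the fractional Sobolev inequality \eqref{eq:sobineq}. Abbreviating $p^\star := 2d/(d-2s)$, Hölder with exponents $q$, $p^\star$ and an extra factor on $|A(k)|$ (legitimate because $q'<p^\star$, which is precisely what $q>d/(2s)$ buys) gives
\[
\int_{A(k)} g\, w_k \ls \|g\|_{L^q}\, \|w_k\|_{L^{p^\star}}\, |A(k)|^{\sigma}, \qquad \sigma := 1/q' - 1/p^\star > 0.
\]
Using \eqref{eq:sobineq} to bound $\|w_k\|_{L^{p^\star}} \ls \Theta\, |w_k|_{H^s}$, cancelling one factor of $|w_k|_{H^s}$, and applying Sobolev once more yields
\[
\|w_k\|_{L^{p^\star}} \ls \Theta^2 \|g\|_{L^q}\, |A(k)|^\sigma.
\]
For $h>k$, the inclusion $A(h)\subset A(k)$ together with $w_k \gs h-k$ on $A(h)$ and Chebyshev give the De~Giorgi inequality
\[
|A(h)| \ls \Big(\frac{\Theta^2 \|g\|_{L^q}}{h-k}\Big)^{p^\star} |A(k)|^\beta, \qquad \beta := p^\star \sigma.
\]

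A short computation shows that $\beta>1$ is equivalent to $q>d/(2s)$, so the hypothesis sits exactly at the threshold where the classical Stampacchia iteration lemma (see, e.g., Lemma~B.1 of \cite{Ros16}) applies to the decreasing function $k\mapsto |A(k)|$ and produces a level $k_0$ with $|A(k_0)|=0$; its explicit value depends on $\Theta^2 \|g\|_{L^q}$, on $|A(0)|\ls |\Omega|$, and on the exponents. This controls $\esssup u$, and applying the same argument to $-u$ (which solves the equation with datum $-g$) controls $\essinf u$. The main subtlety is simply the exponent bookkeeping around the threshold $q=d/(2s)$; the only genuinely fractional inputs are the Sobolev embedding \eqref{eq:sobineq} and the standard domination of the nonlocal quadratic form by $|w_k|_{H^s}^2$ when tested against a truncation.
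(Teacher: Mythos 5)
Your proposal is correct and follows essentially the same route as the paper: Stampacchia truncation tested in the weak formulation, domination of the nonlocal form by $|w_k|_{H^s}^2$, the fractional Sobolev inequality \eqref{eq:sobineq} plus H\"older, and the extinction lemma, with the only (cosmetic) difference that you use the one-sided truncation $(u-k)^+$ and then repeat for $-u$, whereas the paper uses the two-sided soft threshold $G_k(u)=(u-k)^+-(u+k)^-$ to handle both signs at once. One small imprecision worth fixing: the legitimacy of the three-factor H\"older split, i.e.\ $\sigma=1/q'-1/p^\star>0$, only requires $q>2d/(d+2s)$, which is weaker than $q>d/(2s)$; the full hypothesis is needed exactly where you invoke it next, namely to get $\beta=p^\star\sigma>1$ so that the iteration terminates.
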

\begin{proof}It follows by the classical Stampacchia method. A fractional version without linear term appears in \cite[Theorem 13]{LeoPerPriSor15}. However, their statement might make the reader think that the constant depends on $u$, whereas in the proof one sees that this dependence is just in terms of $|\supp u| \ls |\Omega|$. To show this dependence and that the estimate also holds with the linear term in the equation, we briefly present the complete proof.

As in \cite{LeoPerPriSor15} and in the classical case (see \cite[Thm.~B.2]{KinSta00}, for example) we introduce the soft thresholding function
\[ G_k(\sigma) = (\sigma-k)^+ - (\sigma+k)^-\]
and use, for a weak solution $u$ of \eqref{eq:fraclin}, the test function $h = G_k(u)$ in the variational formulation, which results in 
\begin{equation} \iint_{\R^d \times \R^d} \frac{\big(u(x)-u(y)\big)\big(G_k(u(x))-G_k(u(y))\big)}{|x-y|^{d+2s}} \dd x \dd y = \int_{\R^d} (g-u)G_k(u). \label{eq:varstamp}\end{equation}
Now, let us notice that we always have
\[u(x) (u(x)-k)^+ = (u(x)-k) (u(x)-k)^+ + k (u(x)-k)^+ = [(u(x)-k)^+]^2 +k (u(x)-k)^+ \]
as well as 
\[u(x) (u(x)+k)^- = (u(x)+k) (u(x)+k)^- - k (u(x)+k)^- = -[(u(x)+k)^-]^2 -k (u(x)+k)^- \]
which we can combine as
\[ u(x) G_k(u(x)) = G_k(u(x))^2 +k |G_k(u(x))|. \]
Finally, noticing that $|u| \gs |G_k(u)|$ and that $G_k$ preserves signs, we get that if $u(x)$ and $u(y)$ have different signs, then
\[u(x) G_k(u(y)) + u(y) G_k(u(x)) \ls 2 G_k(u(x)) G_k(u(y)),\]
while if $u(x)$ and $u(y)$ are both positive
\begin{align*}u(x) G_k(u(y)) + u(y) G_k(u(x)) &\ls \big(G_k(u(x))+k\big) G_k(u(y)) + \big(G_k(u(y))+k\big) G_k(u(x)) \\&\ls 2G_k(u(x))G_k(u(y)) + k \left( G_k(u(x)) + G_k(u(y)) \right).\end{align*}
and if they are both negative (in which case $u \gs G_k(u)-k$) then
\begin{align*}u(x) G_k(u(y)) + u(y) G_k(u(x)) &\ls\big(G_k(u(x))-k\big) G_k(u(y)) + \big(G_k(u(y))-k\big) G_k(u(x)) \\&\ls 2G_k(u(x))G_k(u(y)) + k \left( |G_k(u(x))| + |G_k(u(y)|) \right).\end{align*}
These three cases, combined with the previous equality, imply that
\[\iint_{\R^d \times \R^d} \frac{ (u(x)-u(y) )\big(G_k(u(x))-G_k(u(y))\big)}{|x-y|^{d+2s}} \dd x \dd y \gs \left|G_k(u)\right|^2_{H^s}.\]
Now, we can plug this estimate in \eqref{eq:varstamp} and note that $-uG_k(u) \ls 0$ to conclude
\[ \left|G_k(u)\right|^2_{H^s} \ls \int_{\R^d} gG_k(u) = \int_{\{|u| \gs k\}}gG_k(u).\]
The Sobolev inequality \eqref{eq:sobineq} on the left hand side and a Hölder inequality on the right implies, since $1-1/q-(d-2s)/2d=(qd-2d+2qs)/2qd$, 
\[\Vert G_k(u) \Vert^2_{L^{2d/(d-2s)}} \ls \Theta^2 \Vert g \Vert_{L^q} \Vert G_k(u) \Vert_{L^{2d/(d-2s)}} |\{|u| \gs k\}|^{\frac{qd-2d+2qs}{2qd}}.\]
Now if $k'>k$, we have $\{|u| \gs k'\} \subset \{|u| \gs k\}$ and $G_k(u) \1_{\{|u| \gs k'\}} \gs (k'-k)\1_{\{|u| \gs k'\}}$, so 
\[\Vert G_k(u) \Vert_{L^{2d/(d-2s)}} \gs (k'-k)|\{|u| \gs k'\}|^{(d-2s)/(2d)}.\]
This leads to, as soon as $k'>k$,
\[(k'-k)|\{|u| \gs k'\}|^{(d-2s)/(2d)}\ls \Theta^2 \Vert g \Vert_{L^q} |\{|u| \gs k\}|^{\frac{qd-2d+2qs}{2qd}},\]
which we can also write as
\[(k'-k)^{2d/(d-2s)}|\{|u| \gs k'\}|\ls \Theta^{\frac{4d}{d-2s}} \Vert g \Vert_{L^q}^{2d/(d-2s)} |\{|u| \gs k\}|^{\frac{qd-2d+2qs}{2qd}\frac{2d}{d-2s}}.\]
Now, in fact 
\[\delta:=\frac{qd-2d+2qs}{2qd}\frac{2d}{d-2s} > 1 \text{ if and only if }q>\frac{d}{2s},\]
which was the assumption on $q$. The previous inequality can then be written as
\[|\{|u| \gs k'\}|\ls \Theta^{\frac{4d}{d-2s}} \Vert g \Vert_{L^q}^{2d/(d-2s)} \frac{|\{|u| \gs k\}|^\delta}{(k'-k)^{2d/(d-2s)}},\]
and the standard extinction lemma \cite[Lem.~B.1]{KinSta00} implies that for 
\[k_0:=2^{\frac{\delta}{\delta-1}\frac{2d}{d-2s}} \Theta^{\frac{4d}{d-2s}} \Vert g \Vert_{L^q}^{2d/(d-2s)} |\{|u| \gs 0\}|^{\delta - 1} \] 
we have $|\{|u| \gs k\}|=0$ for $k \gs k_0$, and $k_0$ is the uniform bound on $u$ we were after. Note that since $u$ is supported on $\Omega$ we have $|\{|u| \gs 0\}|\ls |\Omega|$, so this bound depends on $g,\Omega, q,d$ and $s$ but not on the solution $u$ itself.
\end{proof}

From Proposition \ref{prop:boundedsol}, if $g \in L^q$ with $q>d/2s$ we see in particular that $u$ is a weak solution to 
\[\begin{cases}(-\Delta)^s u = g-u \in L^q(\Omega) &\text{ on }\Omega\\ u = 0 &\text{ on }\R^d\setminus \Omega\end{cases},\]
which (see \cite[Thm.~1.1]{Now21} for a very general but directly applicable statement) implies interior H\"older regularity for $u$, that is 
\begin{equation}\label{eq:uishoelder}u \in \C^{0,\gamma}\big(B(x,r)\big)\text{ for }\gamma \in (0, 2s-d/q)\end{equation}
and all balls $B(x, r) \subset \Omega$. With the additional assumption $g \in L^\infty$, regularity up to the boundary also holds but with the H\"older exponent saturating at $s$ due to boundary effects (see \cite{RosSer14} and \cite{Ros16} for further discussion):
\begin{prop}[{\cite[Prop.~1.1]{RosSer14}}]\label{prop:boundaryregularity}
Let $\Omega$ be a bounded Lipschitz domain satisfying the exterior ball condition, $g \in L^\infty$ and $u$ the weak solution to \eqref{eq:fraclin}. Then for some $C_G>0$ we have the estimate
\[\Vert u \Vert_{\C^{0,s}(\R^d)} \ls C_G \Vert g \Vert_{L^\infty(\Omega)}.\]
\end{prop}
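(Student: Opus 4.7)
The plan is to reduce the statement to the case with no linear term that is already treated by Ros-Oton and Serra, and then recall the key ingredients of their argument. First, since $g \in L^\infty(\Omega) \subset L^q(\Omega)$ for every $q > d/(2s)$, Proposition \ref{prop:boundedsol} yields $u \in L^\infty$. Because the equation is linear, rescaling $u \mapsto u/\|g\|_{L^\infty}$ converts the non-homogeneous Stampacchia bound into a linear estimate $\|u\|_{L^\infty} \ls C_1 \|g\|_{L^\infty}$. We may then rewrite the PDE as
\[\begin{cases}(-\Delta)^s u = \tilde g & \text{on }\Omega,\\ u = 0 & \text{on }\R^d \setminus \Omega,\end{cases}\]
with $\tilde g := g - u \in L^\infty(\Omega)$ satisfying $\|\tilde g\|_{L^\infty} \ls (1+C_1)\|g\|_{L^\infty}$. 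This is exactly the setting of \cite[Prop.~1.1]{RosSer14}, and its conclusion combined with this norm estimate delivers the proposition.

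The proof of the underlying Ros-Oton–Serra estimate itself combines two ingredients. The first is the interior regularity already mentioned in \eqref{eq:uishoelder}: on any ball $B(x,r)$ with $\overline{B(x,r)} \subset \Omega$ one has $C^{0,\gamma}$ bounds for every $\gamma < 2s$ (in particular $\gamma = s$), with constants that blow up as $r \to 0$ in a controlled, scale-covariant fashion. The second is a boundary decay estimate $|u(x)| \ls C \, d(x, \Omega^c)^s$, obtained by constructing a supersolution barrier. The canonical choice is built from the explicit torsion function $w_0(x) = (1-|x|^2)_+^s$ in the unit ball, which is known to satisfy $(-\Delta)^s w_0 = \kappa_{d,s}$ inside $B(0,1)$; translating, rescaling, and patching copies of $w_0$ along exterior balls touching $\partial \Omega$ (whose existence is guaranteed by the exterior ball hypothesis) produces a function $W$ with $(-\Delta)^s W \gs \|\tilde g\|_{L^\infty}$ in a strip around $\partial \Omega$ inside $\Omega$ and $W \gs 0$ outside. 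The comparison principle (Corollary \ref{cor:comppple}), applied to the homogenised equation, then forces $|u| \ls W \ls C\, d(\cdot, \Omega^c)^s$ near the boundary.

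The main obstacle, and the technical heart of the argument, is to glue these two local pieces into a uniform global $C^{0,s}$ bound. The standard dichotomy is as follows: given $x,y \in \overline{\Omega}$, set $r = d(x,\Omega^c)$ and distinguish the cases $|x-y| \ls r/2$, where a rescaled interior Hölder estimate on $B(x,r)$ gives control of $|u(x)-u(y)|/|x-y|^s$ once one tracks how the interior constant depends on $r$ via scaling of the equation, and $|x-y| > r/2$, where one uses simply $|u(x)-u(y)|\ls |u(x)|+|u(y)|$ together with the boundary decay $|u(z)|\ls C\,d(z,\Omega^c)^s$ to obtain the same bound. Carrying this out symmetrically in $x$ and $y$ is exactly where the exterior ball condition and the boundary exponent $s$ interact; the exponent $s$ is moreover sharp, as it already saturates for the explicit barrier $w_0$.
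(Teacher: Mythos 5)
Your proposal is correct and follows essentially the same route as the paper: the paper states this proposition as a direct citation of \cite[Prop.~1.1]{RosSer14}, after having already observed (in the text following Proposition \ref{prop:boundedsol}) that the solution of \eqref{eq:fraclin} solves $(-\Delta)^s u = g - u$ with right-hand side controlled in $L^\infty$ via the Stampacchia bound, which is exactly the reduction you carry out. Your rescaling trick to linearize the $L^\infty$ bound and your sketch of the Ros-Oton--Serra barrier-plus-interior-estimate argument are both accurate, just more detailed than what the paper records.
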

Notice that in particular this result implies that $u(x)=0$ for all $x \in \partial \Omega$, a fact that we will use in Sections \ref{sec:indicatrix} and \ref{sec:flat} below. For $s$ small this is not a given, since functions in $H^s_0(\overline{\Omega}) \subset H^s(\R^d)$ may jump across $\partial \Omega$, see Section \ref{sec:fractv}.

\section{Relations to convex regularization theory}

The regularization functional \eqref{eq:mingen} that we consider is clearly quadratic, leading to linear optimality conditions. However, in this article it is more convenient for us to think of it as a general linear inverse problem with convex regularization, in the spirit of \cite{SchGraGroHalLen09, BurOsh04, SchKalHofKaz12}. This point of view and its explicit use of optimality conditions provides us with enough information on the behaviour of minimizers of \eqref{eq:mingen} with respect to $n$ and $\alpha$ in order to prove our PDE/geometric statements.

We now present the building blocks from regularization theory (specialized to the fractional Laplacian context) that we will need for our main results about level sets in Sections \ref{sec:indicatrix} and \ref{sec:convtocont}. Specifically, in Section \ref{sec:optimality} we make precise the fractional PDE meaning of the optimality conditions while Section \ref{sec:fractv} contains additional material on their relation with fractional perimeters. Afterwards, in Sections \ref{sec:convsub} we treat convergence of subgradients and convergence rates in Bregman distance. Finally, Section \ref{sec:L2rates} contains a basic result for convergence rates of denoising which we later use in Section \ref{sec:rates}.

\begin{prop}\label{prop:existence}
There is a unique minimizer $u_{\alpha,n}$ of \eqref{eq:mingen} in $L^p(\Omega) \cap H^s_0(\overline{\Omega})$, which is determined by the optimality condition \eqref{eq:ELweak}.
\end{prop}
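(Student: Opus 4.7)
The plan is to apply the direct method of the calculus of variations, exploiting the strictly convex quadratic structure of the functional. Let $F(u):= \Vert Au - f - n \Vert_{\Hm}^2 + \alpha |u|_{H^s}^2$. The starting observation is that the Sobolev inequality \eqref{eq:sobineq} gives a continuous embedding $H^s_0(\overline{\Omega}) \hookrightarrow L^{2d/(d-2s)}(\R^d)$, and since $\Omega$ is bounded and $p \ls 2 \ls 2d/(d-2s)$, H\"older's inequality then provides $H^s_0(\overline{\Omega}) \hookrightarrow L^p(\Omega)$. Consequently the admissible set $L^p(\Omega) \cap H^s_0(\overline{\Omega})$ is just $H^s_0(\overline{\Omega})$, and the composition $A: H^s_0(\overline{\Omega}) \to \Hm$ is a bounded linear operator.

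For existence, I would take a minimizing sequence $\{u_k\} \subset H^s_0(\overline{\Omega})$. From $\alpha |u_k|_{H^s}^2 \ls F(u_k)$ and the fact that \eqref{eq:sobineq} upgrades $|\cdot|_{H^s}$ to an honest norm on $H^s_0(\overline{\Omega})$ (equivalent to the Hilbert norm $\scal{\cdot}{\cdot}_{L^2} + \scal{\cdot}{\cdot}_{H^s}$), the sequence is bounded in a Hilbert space. A subsequence $u_{k_j}$ therefore converges weakly to some $u_\star \in H^s_0(\overline{\Omega})$, and weak continuity of the bounded linear operator $A$ yields $Au_{k_j} \rightharpoonup Au_\star$ in $\Hm$. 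Weak lower semicontinuity of squared Hilbert (semi)norms then gives
\[F(u_\star) \ls \liminf_{j \to \infty} F(u_{k_j}) = \inf F,\]
so $u_\star$ is a minimizer.

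Uniqueness follows from strict convexity: the quadratic form $u \mapsto |u|_{H^s}^2$ is strictly convex on $H^s_0(\overline{\Omega})$ as the square of a norm on a Hilbert space, and adding the convex term $\Vert Au - f - n\Vert_{\Hm}^2$ preserves strict convexity. Finally, since $F$ is a convex Fr\'echet-differentiable functional on the Hilbert space $H^s_0(\overline{\Omega})$, the minimizer is characterized by the vanishing of its first variation, and this is precisely the variational identity \eqref{eq:ELweak} obtained in the perturbation computation preceding the statement; both directions of the equivalence between minimizers and weak solutions thus follow at once. I do not expect a substantial obstacle: the only point deserving care is checking that weak convergence in $H^s_0(\overline{\Omega})$ transfers through $A$, but this is immediate once $A$ is recognized, via the Sobolev embedding, as a bounded linear map on that Hilbert space.
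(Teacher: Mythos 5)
Your proposal is correct and follows essentially the same route as the paper: the paper's (much terser) proof likewise invokes the direct method, the Sobolev inequality \eqref{eq:sobineq} together with boundedness of $\Omega$ for the embedding into $L^p(\Omega)$, and strict convexity for uniqueness, with the equivalence to \eqref{eq:ELweak} already established in the perturbation computation preceding the statement. You have simply spelled out the standard details (boundedness of the minimizing sequence, weak lower semicontinuity, weak continuity of $A$) that the paper leaves implicit.
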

\begin{proof}
We can just use the direct method: notice that
\[p<\frac{d}{d-2s}<2<\frac{2d}{d-2s},\]
we have the Sobolev inequality \eqref{eq:sobineq}, and the domain $\Omega$ is bounded. Since this functional is strictly convex, there can only be one minimizer.
\end{proof}

\subsection{Optimality and source conditions for fractional Laplacian regularization}\label{sec:optimality}
Let us start with the optimality condition for the functional \eqref{eq:mingen} at its unique minimizer $u_ {\alpha, n}$, which follows directly by the definition of the adjoint $A^\ast$ and subdifferential \cite[Def.~I.5.1]{EkeTem99} and reads
\[-\frac{1}{\alpha} A^\ast (Au_{\alpha, n}-f-n) \in \frac12 \partial_{L^p(\Omega)}|\cdot|^2_{H^s}(u_{\alpha, n}).\]
In the above, and denoting $u:=u_{\alpha, n}$ for simplicity, the subgradient is understood as the subset of $(L^p(\Omega))' = L^q(\Omega)$ defined as
\begin{equation}\label{eq:defsubgrad}v \in \partial_{L^p} |\cdot|^2_{H^s}(u) \text{ if and only if }|u+h|^2_{H^s} \gs |u|^2_{H^s} + \int_\Omega v h \text{ for all }h\in L^p(\Omega).
\end{equation}
We first remark that since $p \ls 2$ and the domain $\Omega$ is bounded, we have the embedding $L^2 \subset L^p$ and therefore $\partial_{L^p} |\cdot|^2_{H^s} \subset L^q \subset L^2$. Moreover, in the definition \eqref{eq:defsubgrad} all $h \in L^2$ are allowed as well, so $\partial_{L^p} |\cdot|^2_{H^s} \subset \partial_{L^2} |\cdot|^2_{H^s}$. Now, given $u \in L^p(\Omega)$ with $|u|_{H^s} < +\infty$ (which in fact implies $u \in L^2$) let us simply consider $h \in L^2(\Omega)$. Then, either $\vert h \vert_{H^s} = +\infty$ and there is nothing to check, or $\vert h \vert_{H^s} < +\infty$ and \eqref{eq:defsubgrad} implies
\begin{equation}\begin{aligned}\frac{1}{t}\big(\vert u&+th \vert^2_{H^s} - \vert u \vert_{H^s}^2\big) - \int_\Omega v h \\&= 2 \iint_{(\R^d \times \R^d) \setminus (\Omega^c \times \Omega^c)} \frac{(u(x) - u(y))(h(x)-h(y))}{|x-y|^{d+2s}} \dd x\dd y + t\vert h \vert^2_{H^s} - \int_\Omega v h \gs 0, \end{aligned}\end{equation}
which by taking the limit as $t \to 0$ and considering also $-h$, leads to 
\begin{equation}\label{eq:weakforsubg}2 \iint_{(\R^d \times \R^d) \setminus (\Omega^c \times \Omega^c)} \frac{(u(x) - u(y))(h(x)-h(y))}{|x-y|^{d+2s}} \dd x\dd y = \int_\Omega v h \text{ for all }h \in H^s_0(\overline{\Omega})\end{equation}
which is defined to be the weak formulation of
\[\begin{cases}2C(d,s)^{-1}(-\Delta)^s u = v/2 &\text{ on }\Omega\\u = 0 &\text{ on }\R^d \setminus \Omega.\end{cases}\]
Therefore we can consider our regularization problem in the framework of the previous section, that is as a standard weak formulation of a fractional PDE with right hand side in different Lebesgue spaces. Moreover, when making this connection we see that the subgradients of the regularization term at the minimizers are the central quantities of interest, which motivates investigating their convergence as $\alpha \to 0$, which we do in the next subsection.

This type of formulation also applies directly to the standard \emph{source condition} in the context of the functional \eqref{eq:mingen}. Such a condition is satisfied at the point $u^\dag \in L^p(\Omega)$ if there exists $z \in \Hm$ with $A^\ast z \in \partial_{L^p} \left[\frac12 |\cdot|^2_{H^s}(u^\dag)\right]$, so it is satisfied precisely when the weak formulation of 
\[\begin{cases}2 C(d,s)^{-1}(-\Delta)^s u^\dag = A^\ast z &\text{ on }\Omega\\u^\dag = 0 &\text{ on }\R^d \setminus \Omega.\end{cases}\]
holds. Now, in case we have such a source condition and $p<d/(d-2s)$ then we have that $A^\ast z \in L^q(\Omega)$ with $q=p'>d/2s$ and the results of Section \ref{sec:regularity} are applicable, so that $u^\dag$ is not just bounded but also H\"older continuous.

\subsection{Relations with fractional perimeters}\label{sec:fractv}
In previous works on geometric convergence for total variation regularization \cite{ChaDuvPeyPoo17, IglMerSch18, IglMer20, IglMer21} the dual variables or subgradients also play a central role. In that case, using the coarea formula the subgradients appear as perturbations in perimeter minimization problems for the level sets of $u_{\alpha, n}$, and control their regularity. 

On the other hand for the characteristic function $\1_D$ of a set $D$ and $s < 1/2$ we have the relation (following the notation of \cite[Sec.~1]{FigEtAl15}, for example) 
\[\frac12 \big|\1_D\big|^2_{H^s} = \frac12 \int_{\R^d}\int_{\R^d} \frac{|\1_D(x) - \1_D(y)|}{|x-y|^{d+2s}}\dd x \dd y = \int_{D}\int_{D^c} \frac{1}{|x-y|^{d+2s}}\dd x \dd y =: \per_{2s}(D),\]
and sets which are minimizers and almost-minimizers of the fractional perimeter $\per_{2s}$ also satisfy regularity properties. Specifically, for minimizers it is known that $\partial D \in \C^{1,\beta}$ for all $\beta<s$ outside a singular set of dimension at most $d-3$, see \cite[Thm.~6.1]{CafRoqSav10}, \cite[Cor.~2]{SavVal13} and the introduction to \cite{BarFigVal14}. Moreover, in \cite[Cor.~3.5]{FigEtAl15} H\"older regularity of the normal vector is also proved for flat almost-minimizers (in the sense of perturbations with a mass term). There is a limit to which kind of perturbations are allowed, though. Note that if we had 
\[D \in \argmin_E \,\per_{2s}(E) - C\int_E f\]
and $f \in L^q(\Omega)$ for some $q>d/2s$, then the fractional isoperimetric \cite[(1.1)]{FigEtAl15} and H\"older inequalities make the first term dominate the second. However, if instead $f \notin L^{d/2s}_{\text{loc}}(\Omega)$, the functional could assign low energy values to sets of vanishing mass.

Now, we might ask if there is any relation between regularity of almost-minimizers of fractional perimeter and the fractional Laplacian regularization we consider. Working in $H^s(\R^d)$ the coarea formula is not available, but we can reinterpret the subgradient as
\[u^\dag \in \argmin_u \frac12 |u|^2_{H^s} - \int_{\Omega} \big(A^\ast z \big)u,\]
which if $u^\dag = \1_D$, trying the above minimality with functions of the form $u=\1_E$ we end up with
\[D \in \argmin_E \,\per_{2s}(E) - C\int_E A^\ast z.\]
We note however that on the one hand that if $A^\ast z \in L^2(\Omega)$, then we have \cite[Thm.~1.4]{BicWarZua17} that $\1_D \in H^{2s}_{\text{loc}}(\Omega)$, in the sense that $\psi \1_D \in H^{2s}(\R^d)$ for all $\psi \in \mathcal{C}^\infty_c(\Omega)$. But on the other hand (see \cite[Thm.~11.4]{LioMag72} and \cite[Ch.~33]{Tar07}) functions in $H^{2s}$ cannot contain jump discontinuities as soon as $s\gs1/4$. We might ask ourselves if in the case $s<1/4$ it would be possible to obtain regularity of $\partial D$ from such a source condition. The answer is no, because we would end up with the requirement $A^\ast z \in L^q$ for $q > d/2s$. This is exactly the exponent threshold for $\gamma$ in \eqref{eq:uishoelder}, which implies that $u^\dag$ is H\"older continuous, so $u^\dag = \1_D$ is again not possible.

Let us also mention that in the recent work \cite{NovOno21}, the authors study a denoising scheme with the $W^{s,1}$ seminorm as regularizer, proving preservation of H\"older continuity and hence recreating the well-known result of \cite{CasChaNov11} for total variation denoising. In the fractional case, this seminorm satisifies a coarea formula (first proved in \cite{Vis91}, see also \cite[Thm.~2.2.2]{BucVal16}) in terms of the fractional perimeter, which makes a purely geometric point of view applicable and leads one to expect that results along the lines of those in \cite{ChaDuvPeyPoo17, IglMerSch18, IglMer21} also hold. However, from a numerical point of view the minimization of such a functional is very challenging, since it combines the nonlocality of the fractional formulation and the nonsmoothness arising from being based on $L^1$-type norms.

\subsection{Convergence of subgradients, as appearing in the Dirichlet problem}\label{sec:convsub}

\begin{prop}\label{prop:convergenceoflaplacians}Assume that $A^\ast$ is compact from $\Hm$ to $L^q(\Omega)$, that $u^\dag \in L^p(\Omega)$ is such that $A u^\dag = f$ and $A^\ast z \in \partial_{L^p}\left[\frac12 |\cdot|^2_{H^s} \right](u^\dag)$, and that $\|n\|_\Hm \to 0$ and $\alpha \to 0$ with $\|n\|_\Hm/\alpha \ls C$. Then the minimizers $u_{\alpha, n}$ in \eqref{eq:mingen} satisfy
\begin{equation}\label{eq:convergenceoflaplacians}
\left\|\frac{A^\ast \big( A u_{\alpha,n} - f\big)}{\alpha}+A^\ast z\right\|_{L^q(\Omega)}\to 0 \quad\text{ and }\quad D_{A^\ast z}(u_{\alpha,n},u^\dag) = O(\alpha),
\end{equation}
where $D_{A^\ast z}(u_{\alpha, n}, u^\dag)$ denotes the Bregman distance with respect to the subgradient $A^\ast z \in \partial \left[ \frac12 |\cdot|^2_{H^s}\right](u^\dag)$, that is
\[D_{A^\ast z}(u_{\alpha, n}, u^\dag) := \frac{1}{2}|u_{\alpha, n}|^2_{H^s} - \frac{1}{2}|u^\dag|^2_{H^s} - \scal{A^\ast z}{u_{\alpha, n} - u^\dag}_{(L^q(\Omega), L^p(\Omega))}.\]
\end{prop}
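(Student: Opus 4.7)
The strategy is the classical Burger--Osher source-condition argument, specialized to the quadratic $H^s$-regularizer, with compactness of $A^\ast$ upgrading the natural distributional convergence of subgradients to strong convergence in $L^q$.

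\emph{Bregman bound.} Plugging $u^\dag$ as a competitor in \eqref{eq:mingen} and using $Au^\dag = f$ produces
\begin{equation*}
\|Au_{\alpha,n}-f-n\|_\Hm^2 + \alpha|u_{\alpha,n}|^2_{H^s} \;\le\; \|n\|_\Hm^2 + \alpha|u^\dag|^2_{H^s}.
\end{equation*}
I would then expand $|u_{\alpha,n}|^2_{H^s}-|u^\dag|^2_{H^s} = 2D_{A^\ast z}(u_{\alpha,n},u^\dag) + 2\langle A^\ast z, u_{\alpha,n}-u^\dag\rangle$, rewrite $\langle A^\ast z, u_{\alpha,n}-u^\dag\rangle = \langle z, Au_{\alpha,n}-f-n\rangle_\Hm + \langle z, n\rangle_\Hm$ via the adjoint and $Au^\dag = f$, and absorb the first cross term into the data-fidelity via Young's inequality
\begin{equation*}
2\alpha\,|\langle z,Au_{\alpha,n}-f-n\rangle_\Hm| \;\le\; \tfrac12\|Au_{\alpha,n}-f-n\|_\Hm^2 + 2\alpha^2\|z\|_\Hm^2.
\end{equation*}
Combining yields simultaneously $D_{A^\ast z}(u_{\alpha,n},u^\dag) = O(\alpha)$ and the residual bound $\|Au_{\alpha,n}-f-n\|_\Hm = O(\alpha)$, whence also $\|Au_{\alpha,n}-f\|_\Hm = O(\alpha)$ under $\|n\|_\Hm/\alpha \le C$.

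\emph{Convergence of subgradients.} Writing $p_{\alpha,n} := A^\ast(f+n-Au_{\alpha,n})/\alpha$ for the $L^q(\Omega)$-subgradient identified in Section~\ref{sec:optimality}, the first assertion in \eqref{eq:convergenceoflaplacians} amounts (modulo the term $A^\ast n/\alpha$) to $p_{\alpha,n} \to A^\ast z$ in $L^q(\Omega)$. I plan to combine two ingredients. First, the data-fidelity bound from the previous step shows that $(f+n-Au_{\alpha,n})/\alpha$ is bounded in $\Hm$, so $\{p_{\alpha,n}\}$, being the image under the compact operator $A^\ast$ of a bounded set, is relatively compact in $L^q(\Omega)$. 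Second, in the quadratic case a direct calculation gives $D_{A^\ast z}(u_{\alpha,n},u^\dag) = \tfrac12|u_{\alpha,n}-u^\dag|_{H^s}^2$, so the Bregman bound forces $u_{\alpha,n}\to u^\dag$ strongly in $H^s$; continuity of the linear subgradient map $u\mapsto 2C(d,s)^{-1}(-\Delta)^s u$ from $H^s_0(\overline{\Omega})$ into $H^{-s}$ then gives $p_{\alpha,n}\to A^\ast z$ distributionally. The Sobolev embedding $L^q(\Omega)\hookrightarrow H^{-s}$, which holds on bounded $\Omega$ because $q>d/2s>2d/(d+2s)$, forces every $L^q$-cluster point of $\{p_{\alpha,n}\}$ to coincide with $A^\ast z$; a Urysohn sub-subsequence argument promotes this to convergence of the full sequence in $L^q(\Omega)$. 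The remaining noise term $A^\ast n/\alpha$ in \eqref{eq:convergenceoflaplacians} is dealt with by the same compactness-plus-identification scheme applied to $n/\alpha$, which is bounded in $\Hm$ while $\|n\|_\Hm \to 0$.

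\emph{Main obstacle.} The delicate step is exactly this upgrade from the weak (distributional) convergence of the subgradients, which is automatic in the Burger--Osher framework, to the strong $L^q$ convergence claimed here: relative compactness in $L^q$ alone supplies only subsequential cluster points, and it is the embedding $L^q\hookrightarrow H^{-s}$ that identifies them uniformly with $A^\ast z$. The restriction $q>d/2s$ imposed throughout the paper is precisely what makes this bridge, and hence the compactness argument, available.
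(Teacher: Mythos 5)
Your argument follows the paper's proof essentially step for step: the Burger--Osher quadratic completion (your Young's inequality is an equivalent reformulation of their completed square $\frac12\|Au_{\alpha,n}-f+\alpha z\|^2_\Hm$) yields $D_{A^\ast z}(u_{\alpha,n},u^\dag)=O(\alpha)$ and boundedness of the residual over $\alpha$ in $\Hm$; compactness of $A^\ast$ then gives relative compactness of the subgradients in $L^q(\Omega)$; and the cluster points are identified by passing to the limit in the weak formulation at $u^\dag$. Your identification step is actually spelled out more carefully than the paper's terse ``pass to the limit in the weak formulation,'' since you justify $u_{\alpha,n}\to u^\dag$ strongly in $H^s$ via $D_{A^\ast z}(u_{\alpha,n},u^\dag)=\tfrac12|u_{\alpha,n}-u^\dag|^2_{H^s}$ before invoking continuity of the subgradient map into $H^{-s}$.

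The one step that does not close as written is your treatment of the noise term $A^\ast n/\alpha$. The ``compactness-plus-identification scheme'' applied to $n/\alpha$ only produces subsequential limits $A^\ast w$ for weak cluster points $w$ of $n/\alpha$, and nothing in the hypotheses $\|n\|_\Hm\to 0$, $\|n\|_\Hm/\alpha\ls C$ forces such $w$ to vanish (consider $n=\alpha z_0$ for a fixed $z_0\in\Hm$); there is no variational characterization available to identify this limit with zero. What your argument genuinely proves is strong $L^q$ convergence of the actual subgradient $A^\ast(f+n-Au_{\alpha,n})/\alpha$ to $A^\ast z$. To be fair, the paper's own proof silently drops the same term when it writes $-\alpha^{-1}A^\ast(Au_{\alpha,n}-f)$ for the subgradient, so on this point you are no less rigorous than the source --- but you should not present the vanishing of $A^\ast n/\alpha$ as following from the stated scheme.
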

\begin{proof}The proof is based on the one of \cite[Thm.~2]{BurOsh04} (see also \cite{Val21} for more in-depth results in the same fashion). To start, optimality in \eqref{eq:mingen} leads to
\[\|Au_{\alpha, n}-f-n\|^2_{\Hm} + \alpha|u_{\alpha, n}|^2_{H^s} \ls \|n\|^2_{\Hm} + \alpha |u^\dag|^2_{H^s},\]
or \[\frac{1}{2}\|Au_{\alpha, n} - f\|_\Hm^2 +\frac{\alpha}{2} \left( |u_{\alpha, n}|^2_{H^s} - |u^\dag|^2_{H^s}\right) \ls \frac12\|n\|^2_{\Hm}.\]
which using the definitions of $A^\ast$ and of $D_{A^\ast z}$, is equivalent to
\[\frac{1}{2}\|Au_{\alpha, n} - f\|_\Hm^2 + \scal{\alpha z}{Au_{\alpha,n} - f} + \alpha D_{A^\ast z}(u_{\alpha, n}, u^\dag) \ls \frac12 \|n\|^2_{\Hm},\]
or equivalently
\begin{equation}\label{eq:buroshest}\frac{1}{2}\|Au_{\alpha, n} - f + \alpha z\|_\Hm^2 + \alpha D_{A^\ast z}(u_{\alpha, n}, u^\dag) \ls \frac12 \|n\|^2_{\Hm} + \frac{\alpha^2}{2}\|z\|_{\Hm}^2.\end{equation}
Now, classically, dividing by $\alpha$ in \eqref{eq:buroshest} and using $\|n\|_{\Hm}/\alpha \ls C$ gives us the convergence rate $D_{A^\ast z}(u_{\alpha, n}, u^\dag) = O(\alpha)$. We can examine the first term further, obtaining
\[\|Au_{\alpha, n} - f\|_\Hm \ls \big( \|n\|^2_{\Hm} + \alpha^2\|z\|_{\Hm}^2 \big)^{1/2} + \alpha \|z\|_\Hm,\]
which again using the assumption $\|n\|_{\Hm}/\alpha \ls C$ tells us that the family of functions $(Au_{\alpha, n} - f)/\alpha$ is bounded in $\Hm$, so it can be assumed to (up to a subsequence) converge weakly to some element of $\Hm$. Since we have assumed $A^\ast$ to be compact, then the functions $A^\ast(Au_{\alpha, n} - f)/\alpha$ converge strongly in $L^q(\Omega)$ to some element $v_0$. Moreover, since 
\[-\frac{1}{\alpha} A^\ast(Au_{\alpha, n} - f) \in \partial \left[\frac12|\cdot|^2_{H^s}\right](u_{\alpha,n})\text{ and }u_{\alpha, n} \to u^\dag,\]
we can pass to the limit in the weak formulation \eqref{eq:weakforsubg} and use that it has a unique solution, implying not only that we must have $v_0 \in \partial \left[\frac12 |\cdot|^2_{H^s}\right](u^\dag) = \{A^\ast z\}$, but also that the whole sequence converges to $-A^\ast z$.
\end{proof}

It is worthwhile to remark that also for $\TV$ regularization, convergence of subgradients is an important ingredient in the results of geometric convergence of \cite{ChaDuvPeyPoo17, IglMerSch18, IglMer20, IglMer21}, see in particular \cite[Prop.~3]{IglMerSch18}. However, there is one crucial difference: in the total variation case, one does not need to assume $A^\ast$ to be compact, and in fact as soon as the functions $Au_{\alpha,n}-f$ are bounded in $\Hm$ they must converge strongly. One can interpret this in light of the Radon-Riesz property satisfied by Hilbert spaces and uniformly convex Banach spaces, meaning that weak convergence combined with convergence of norms implies strong convergence. For one-homogeneous functionals like $\TV$ the second part is automatically satisfied, since their subgradients are zero-homogeneous (i.e. $\partial\TV (\lambda u)=\partial\TV(u)$). For a quadratic functional like $|\cdot|^2_{H^s}$ this is not the case, so we require compactness in addition.

\begin{remark}\label{rem:yosida}When applying a denoising method and with $n=0$, it is also possible to just consider the properties of resolvents and Yosida regularization, which tell us \cite[Prop.~7.2 (a1) and (d)]{Bre11} that if $f \in H^{2s}(\R^d)=\mathrm{dom}((-\Delta)^s)$ then
\[(-\Delta)^s u_\alpha \to (-\Delta)^s f \text{ strongly in }L^2.\]
\end{remark}

\subsection{Bregman distance in the case of the fractional Laplacian}\label{sec:bregman}
In the present case and assuming $u_{\alpha, n}, u^\dag \in H^{2s}(\R^d) \cap H^s_0(\overline{\Omega})$ additionally to the existence of a source element $z$, the Bregman distance can be computed using \eqref{eq:intbyparts} as
\begin{align*} D_{A^\ast z}(u_{\alpha,n},u^\dag) &= |u_{\alpha,n}|_{H^s}^2 - |u^\dag|_{H^s}^2 - \scal{A^\ast z}{u_{\alpha,n} - u^\dag}_{(L^q(\Omega), L^p(\Omega))} \\
&= |u_{\alpha,n}|_{H^s}^2 - |u^\dag|_{H^s}^2 - \frac{4}{C(d,s)}\scal{(-\Delta)^s u^\dag}{u_{\alpha,n} - u^\dag}_{(L^q(\Omega), L^p(\Omega))} \\
&= |u_{\alpha,n}|_{H^s}^2 - |u^\dag|_{H^s}^2 - \frac{4}{C(d,s)}\scal{(-\Delta)^s u^\dag}{u_{\alpha,n} - u^\dag}_{(L^q(\R^d), L^p(\R^d))} \\
& =|u_{\alpha,n}|_{H^s}^2 - |u^\dag|_{H^s}^2  \\
&\qquad\qquad\;\;\; - \frac{4}{C(d,s)}\scal{(-\Delta)^{s/2} u^\dag}{ (-\Delta)^{s/2} u_{\alpha,n} - (-\Delta)^{s/2} u^\dag}_{L^2(\R^d)} \\
&= \frac{2}{C(d,s)} \bigg( \left\|(-\Delta)^{s/2} u_{\alpha,n}\right\|^2_{L^2} + \left\|(-\Delta)^{s/2} u^\dag\right\|^2_{L^2} \\&\qquad\qquad\;\;\; - 2\scal{(-\Delta)^{s/2} u^\dag}{(-\Delta)^{s/2} u_{\alpha,n}}_{L^2(\R^d)} \bigg) \\
&= \frac{2}{C(d,s)}\left\|(-\Delta)^{s/2} u_{\alpha,n} - (-\Delta)^{s/2} u^\dag\right\|^2_{L^2}=|u_{\alpha,n} - u^\dag|_{H^s}^2,
\end{align*}
and as we saw in Proposition \ref{prop:convergenceoflaplacians} it satisfies
\[D_{A^\ast z}(u_{\alpha,n},u^\dag) = O(\alpha),\]
so that $|u_{\alpha,n} - u^\dag |_{H^s} = O(\alpha^{1/2})$ as well. 

Note that the sequence of equalities above would also hold with the only assumption that $u^\dag$ is in $H^s$, without the need for additional regularity. Indeed, the product
\[\scal{A^\ast z}{u_{\alpha,n}}_{(L^q(\Omega), L^p(\Omega))} \]
can be rewritten using the variational formulation of the equation $A^\ast z = (-\Delta)^s u^\dag$, which combined with \eqref{eq:parseval} would yield the fourth equality.

\subsection{Convergence rates in \texorpdfstring{$L^2$}{L2} norm for denoising}\label{sec:L2rates}
\begin{lemma}\label{lem:L2rates}If $f \in H^s_0(\overline{\Omega})$, $\|n\|_{L^2}/\alpha \ls C$ and $u_{\alpha, n}$ is the unique minimizer of
\begin{equation}\label{eq:fracden}
u \mapsto \int_{\Omega} \big(u(x)-f(x)-n(x)\big)^2 \dd x + \alpha |u|^2_{H^s},
\end{equation}
then we have
\begin{equation}\label{eq:sqrtrate}\|u_{\alpha, n} - f\|_{L^2} \ls C(f) \alpha^{1/2}.\end{equation}
\end{lemma}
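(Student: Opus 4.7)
The plan is to use $f$ itself as a competitor in the minimization defining $u_{\alpha,n}$. This is the classical ``oracle inequality'' argument, made possible precisely because we assumed $f \in H^s_0(\overline{\Omega})$, so that $|f|_{H^s}$ is finite.

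First, by optimality of $u_{\alpha,n}$ in \eqref{eq:fracden} applied to the competitor $f$, we get
\[\|u_{\alpha,n}-f-n\|^2_{L^2} + \alpha|u_{\alpha,n}|^2_{H^s} \ls \|n\|^2_{L^2} + \alpha|f|^2_{H^s}.\]
Expanding the square on the left as $\|u_{\alpha,n}-f\|^2_{L^2} - 2\scal{u_{\alpha,n}-f}{n}_{L^2} + \|n\|^2_{L^2}$, the $\|n\|^2_{L^2}$ terms cancel and we drop the nonnegative $\alpha|u_{\alpha,n}|^2_{H^s}$ to obtain
\[\|u_{\alpha,n}-f\|^2_{L^2} \ls 2\scal{u_{\alpha,n}-f}{n}_{L^2} + \alpha|f|^2_{H^s}.\]

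Next, I would handle the cross term by Cauchy--Schwarz followed by Young's inequality, writing
\[2\scal{u_{\alpha,n}-f}{n}_{L^2} \ls 2\|u_{\alpha,n}-f\|_{L^2}\|n\|_{L^2} \ls \tfrac{1}{2}\|u_{\alpha,n}-f\|^2_{L^2} + 2\|n\|^2_{L^2},\]
so that after absorbing $\tfrac{1}{2}\|u_{\alpha,n}-f\|^2_{L^2}$ into the left hand side,
\[\tfrac{1}{2}\|u_{\alpha,n}-f\|^2_{L^2} \ls 2\|n\|^2_{L^2} + \alpha|f|^2_{H^s}.\]

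Finally, using the hypothesis $\|n\|_{L^2} \ls C\alpha$, we bound $\|n\|^2_{L^2} \ls C^2\alpha^2$, and for $\alpha$ small (say $\alpha\ls 1$) we have $\alpha^2 \ls \alpha$, so
\[\|u_{\alpha,n}-f\|^2_{L^2} \ls (4C^2 + 2|f|^2_{H^s})\alpha,\]
which gives \eqref{eq:sqrtrate} with $C(f) = \sqrt{4C^2+2|f|^2_{H^s}}$. There is no serious obstacle here: the whole statement is a three-line consequence of optimality plus Young's inequality, and the hypothesis $f\in H^s_0(\overline{\Omega})$ is exactly what is needed so that using $f$ as a test function is admissible and yields a finite right hand side.
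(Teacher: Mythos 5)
Your argument is correct and follows essentially the same route as the paper's: both test optimality of $u_{\alpha,n}$ against the competitor $f$, which is admissible precisely because $f\in H^s_0(\overline{\Omega})$, and then use the parameter choice $\|n\|_{L^2}\ls C\alpha$. The only (cosmetic) difference is in handling the cross term $2\scal{u_{\alpha,n}-f}{n}_{L^2}$: you absorb it via Young's inequality, while the paper bounds it with Cauchy--Schwarz after a second test with the zero competitor to control $\|u_{\alpha,n}\|_{L^2}$; your variant is slightly more economical and equally valid (modulo the harmless restriction $\alpha\ls 1$ to convert $\alpha^2$ into $\alpha$).
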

\begin{proof}
Since $f\in H^s(\R^d)$ we can test minimality in \eqref{eq:fracden} with $f$ to get
\begin{equation}\label{eq:testbyf}\big(\|u_{\alpha, n} - f\|_{L^2} - \|n\|_{L^2}\big)^2 \ls  \|u_{\alpha, n} - f - n\|_{L^2}^2  + \alpha |u_{\alpha, n}|_{H^s}^2 \ls \|n\|^2_{L^2} + \alpha |f|_{H^s}^2,\end{equation}
and again testing minimality with zero we also get
\[\big(\|u_{\alpha, n}\|_{L^2} - \|f+n\|_{L^2}\big)^2 \ls\|u_{\alpha, n} - f - n\|_{L^2}^2  + \alpha |u_{\alpha, n}|_{H^s}^2 \ls \|f+n\|^2_{L^2},\]
which implies
\[\|u_{\alpha, n}\|_{L^2} \ls 2\|f+n\|_{L^2}.\]
Using this last inequality in \eqref{eq:testbyf} and the parameter choice, we end up with
\begin{align*}\|u_{\alpha, n} - f\|_{L^2}^2 &\ls \alpha |f|_{H^s}^2 + 2\|u_{\alpha, n} - f\|_{L^2}\|n\|_{L^2} + \|n\|^2_{L^2} \\&\ls \alpha |f|_{H^s}^2 + \big(2\|f\|_{L^2}+4\|f+n\|_{L^2}\big)\|n\|_{L^2} + \|n\|^2_{L^2} \\ & \ls\alpha \big(|f|_{H^s}^2 + 2C\|f\|_{L^2}+4C\|f+n\|_{L^2}\big) + C^2\alpha^2,\end{align*}
which implies \eqref{eq:sqrtrate}.
\end{proof}

\begin{remark}The lemma above applies to $f=\1_D$ when $s<1/2$ and $D$ of finite perimeter, since then \[|\1_D|^2_{H^s}=\per_{2s}(D) \ls \per(D) < + \infty.\]
In contrast, as noted in Section \ref{sec:fractv}, when $s \geq 1/2$ functions in $H^s$ contain no jump discontinuities.
\end{remark}

\section{Geometric convergence for denoising with piecewise constant ideal data}\label{sec:indicatrix}
We want to use the barriers constructed in \cite{SavVal11,SavVal14} to obtain geometric convergence for the minimizer of 
\begin{equation}
\argmin_{u \in H^s_0(\overline{\Omega})} \int_\Omega \big( u(x) - f(x) - n(x) \big)^2 + \alpha |u|^2_{H^s},
 \label{eq:fracrofW}
\end{equation}
as $\alpha$ and $n$ tend to zero simultaneously. In this section we restrict ourselves to $f = \1_D$, the characteristic function of an open set $D \subset \Omega$. This implies in particular that if the noise $n$ vanishes, minimizers will have values in between $0$ and $1$, by the following lemma:

\begin{lemma}[Truncation]\label{lem:threshold}
Assume that $n=0$ and $f \in L^\infty(\Omega)$ is such that $\essinf_\Omega f \ls 0 \ls \esssup_\Omega f$. Then the minimizer $u_{\alpha, 0}$ of \eqref{eq:fracrofW} satisfies
\[u_{\alpha, 0}(x) \in \left[\essinf_{\Omega} f, \esssup_{\Omega} f\right] \text{ for a.e. } x \in \Omega.\]
\end{lemma}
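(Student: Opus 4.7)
The plan is a standard truncation argument: compare $u_{\alpha,0}$ with its pointwise projection onto the interval $[m,M]$, where $m := \essinf_\Omega f$ and $M:=\esssup_\Omega f$, and use uniqueness of the minimizer (Proposition \ref{prop:existence}) to conclude. Concretely, I would set $T(t) := \max\big(m, \min(M, t)\big)$ and define $\tilde u := T \circ u_{\alpha, 0}$. Since the assumption $m \ls 0 \ls M$ gives $T(0)=0$, the function $\tilde u$ automatically vanishes on $\R^d \setminus \Omega$, so once I verify that $|\tilde u|_{H^s}<+\infty$ it is an admissible competitor in $H^s_0(\overline{\Omega})$.

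Next I would check separately that the two terms of the energy do not increase under the truncation. For the fidelity term, since $f(x) \in [m,M]$ for a.e.\ $x \in \Omega$, the map $T$ is precisely the nearest point projection onto the interval containing $f(x)$, so $\big(\tilde u(x) - f(x)\big)^2 \ls \big(u_{\alpha,0}(x) - f(x)\big)^2$ pointwise a.e., with strict inequality exactly on $\{u_{\alpha,0} \notin [m, M]\}$. For the seminorm, I would use that $T$ is $1$-Lipschitz on $\R$, which gives $|\tilde u(x) - \tilde u(y)| \ls |u_{\alpha,0}(x) - u_{\alpha,0}(y)|$ for pairs $x,y \in \Omega$; for pairs $x \in \Omega$, $y \in \Omega^c$ the same inequality still holds because $u_{\alpha, 0}(y)=\tilde u(y)=0$ and $|T(t) - T(0)| \ls |t-0|$; finally on $\Omega^c \times \Omega^c$ both functions vanish. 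Integrating against the kernel $|x-y|^{-d-2s}$ and using the formulation \eqref{eq:complementformula} yields $|\tilde u|^2_{H^s} \ls |u_{\alpha, 0}|^2_{H^s}$ (so in particular $\tilde u \in H^s_0(\overline{\Omega})$).

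Adding the two inequalities gives that the energy at $\tilde u$ is no larger than the energy at $u_{\alpha, 0}$, and strictly smaller unless $u_{\alpha,0} \in [m,M]$ a.e. By the strict convexity and uniqueness asserted in Proposition \ref{prop:existence}, this forces $\tilde u = u_{\alpha, 0}$ a.e., which is exactly the claim. The only delicate point, and the one place the assumption $m \ls 0 \ls M$ is essential, is the behaviour across $\partial \Omega$: without $T(0)=0$ the truncated function would exhibit a jump from a nonzero value to $0$ when crossing into $\Omega^c$, adding a nontrivial contribution to the cross integral in \eqref{eq:complementformula} that could destroy the monotonicity of the seminorm under truncation. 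Once this cross term is handled by the sign condition on $f$, the rest of the argument is routine.
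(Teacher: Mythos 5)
Your proof is correct and follows essentially the same route as the paper: truncate the minimizer, check that neither the fidelity term nor the $H^s$ seminorm increases (the sign condition $\essinf_\Omega f \ls 0 \ls \esssup_\Omega f$ being exactly what keeps the truncation admissible across $\partial\Omega$ and controls the cross term in \eqref{eq:complementformula}), and conclude by uniqueness of the minimizer. The only difference is cosmetic: the paper truncates one side at a time and verifies the seminorm inequality by an explicit case decomposition of the double integral, whereas you obtain the same pointwise bound on the integrand in one stroke from the $1$-Lipschitz property of the projection $T$ together with $T(0)=0$.
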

\begin{proof}Let $u$ be arbitrary and denote a truncation at level $T:=\esssup_{\Omega} f$ of it by $u^T := \min(u, \esssup_{\Omega} f )$. First, we have that 
\[\int_{\Omega} \big(u^T(x)-f(x)\big)^2 \dd x \ls \int_{\Omega} \big(u(x)-f(x)\big)^2 \dd x.\]
Moreover, thanks to \eqref{eq:complementformula} the seminorm $|u|_{H^s}$ can be written as the sum of two interactions. For the first term we have
\begin{align*}&\int_{\Omega}\int_{\Omega} \frac{|u^T(x) - u^T(y)|^2}{|x-y|^{d+2s}} \dd x\dd y\\ 
&\qquad=\int_{\{u\gs T\}}\int_{\{u< T\}} \frac{|T - u(y)|^2}{|x-y|^{d+2s}} \dd x\dd y + 
\int_{\{u< T\}}\int_{\{u\gs T\}} \frac{|u(x) - T|^2}{|x-y|^{d+2s}} \dd x\dd y \\&\qquad\qquad+ \int_{\{u<T\}}\int_{\{u < T\}} \frac{|u(x) - u(y)|^2}{|x-y|^{d+2s}} \dd x\dd y,
\end{align*} 
which leads to 
\begin{align*}&\int_{\Omega}\int_{\Omega} \frac{|u^T(x) - u^T(y)|^2}{|x-y|^{d+2s}} \dd x\dd y\\ 
&\qquad\ls \int_{\{u\gs T\}}\int_{\{u< T\}} \frac{|u(x) - u(y)|^2}{|x-y|^{d+2s}} \dd x\dd y + 
\int_{\{u< T\}}\int_{\{u\gs T\}} \frac{|u(x) - u(y)|^2}{|x-y|^{d+2s}} \dd x\dd y \\&\qquad\qquad+ \int_{\{u<T\}}\int_{\{u < T\}} \frac{|u(x) - u(y)|^2}{|x-y|^{d+2s}} \dd x\dd y \\
&\qquad\ls \int_{\Omega}\int_{\Omega} \frac{|u(x) - u(y)|^2}{|x-y|^{d+2s}} \dd x\dd y,
\end{align*}
and for the second term, owing to the assumption $T \gs 0$, that 
\begin{align*}&\int_{\Omega^c}\int_{\Omega} \frac{|u^T(x)|^2}{|x-y|^{d+2s}} \dd x\dd y\\
&\qquad=\int_{\Omega^c}\int_{\{u>T\}} \frac{T^2}{|x-y|^{d+2s}} \dd x\dd y + \int_{\Omega^c}\int_{\{u \ls T\}} \frac{|u(x)|^2}{|x-y|^{d+2s}} \dd x\dd y \\
&\qquad\ls \int_{\Omega^c} \int_{\Omega} \frac{|u(x)|^2}{|x-y|^{d+2s}} \dd x\dd y.
\end{align*}
Combined, these inequalities imply that $|u^T|_{H^s} \ls |u|_{H^s}$. For $\max (u, \essinf_{\Omega} f)$ one proceeds analogously. By uniqueness of the minimizer of \eqref{eq:fracrofW} (with $n=0$) this implies $u^T=u$.
\end{proof}

\begin{remark}
The assumption $\essinf_\Omega f \ls 0 \ls \esssup_\Omega f$ is not superfluous, since we have a homogeneous Dirichlet boundary/outer condition. This implies that the denoising procedure does not preserve constants and has a bias towards zero, since the second term of \eqref{eq:complementformula} imposes a decay as $d(x, \partial \Omega) \to 0$.
\end{remark}

Next we explore the behaviour of the energy with respect to rescaling in space, which results in a scaling factor multiplying the right hand side of the Euler-Lagrange equation. This fact will be used to analyze the behaviour of solutions using a `fixed amount of diffusion'.

\begin{lemma}[Scaling]\label{lem:scaling}
If $n=0$ and $u_\alpha$ is the minimizer of \eqref{eq:fracrofW}, then the function
\[u_\alpha^{\rho}(x):=u_\alpha(\rho x)\]
minimizes
\begin{equation}
\int_{\rho^{-1} \Omega} \big(u(\hat x) - f(\rho \hat x)\big)^2 \dd \hat x + \alpha \rho^{-2s}|u|^2_{H^s}
 \label{eq:scaledfracrof}
\end{equation}
\end{lemma}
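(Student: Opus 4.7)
The plan is to prove this by a direct change of variables $y = \rho \hat{x}$ in both terms of the energy \eqref{eq:fracrofW}, which will transform it (up to an overall positive multiplicative factor) into the rescaled functional \eqref{eq:scaledfracrof}, and then conclude by the uniqueness part of Proposition \ref{prop:existence}. The main task is to track the powers of $\rho$ correctly; no real obstacle is expected.

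More precisely, first I would observe that the map $u \mapsto u(\rho \,\cdot)$ is a bijection between $H^s_0(\overline{\Omega})$ and $H^s_0(\overline{\rho^{-1}\Omega})$, since the supports transform accordingly and a linear change of variables preserves the Gagliardo seminorm up to a power of $\rho$. Concretely, writing $v(\hat x) := u(\rho \hat x)$ and substituting $x = \rho \hat x$, $y = \rho \hat y$ in the double integral defining $|u|_{H^s}^2$, the kernel picks up a factor $\rho^{-(d+2s)}$ while the volume element contributes $\rho^{2d}$, yielding
\[
|u|^2_{H^s} = \rho^{2d-(d+2s)} |v|^2_{H^s} = \rho^{d-2s}|v|^2_{H^s}.
\]
Similarly, for the fidelity term, substituting $x = \rho \hat x$ gives
\[
\int_\Omega \big(u(x)-f(x)\big)^2 \dd x = \rho^d \int_{\rho^{-1}\Omega} \big(v(\hat x)-f(\rho \hat x)\big)^2 \dd \hat x.
\]

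Combining these two identities, the energy \eqref{eq:fracrofW} evaluated at $u$ equals
\[
\rho^d \left[ \int_{\rho^{-1}\Omega} \big(v(\hat x)-f(\rho \hat x)\big)^2 \dd \hat x + \alpha \rho^{-2s} |v|_{H^s}^2 \right],
\]
which is exactly $\rho^d$ times the functional in \eqref{eq:scaledfracrof} evaluated at $v$. Since the bijection $u \leftrightarrow v$ preserves admissibility and the factor $\rho^d>0$ does not affect minimizers, the minimizer $u_\alpha$ of \eqref{eq:fracrofW} corresponds via this bijection to the unique minimizer of \eqref{eq:scaledfracrof}, which is precisely $u_\alpha^\rho = u_\alpha(\rho\, \cdot)$. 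Uniqueness of both minimizers, given by Proposition \ref{prop:existence} applied to $\Omega$ and to $\rho^{-1}\Omega$ respectively, then yields the claim.
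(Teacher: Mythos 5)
Your proof is correct and follows essentially the same route as the paper: a change of variables $x=\rho\hat x$, $y=\rho\hat y$ in both the fidelity term and the Gagliardo seminorm, yielding the factors $\rho^d$ and $\rho^{d-2s}$ respectively, after which the overall factor $\rho^d$ is discarded and one concludes by uniqueness of minimizers. Your explicit remark that $u\mapsto u(\rho\,\cdot)$ is a bijection between $H^s_0(\overline{\Omega})$ and $H^s_0(\overline{\rho^{-1}\Omega})$ is a point the paper leaves implicit, but the argument is the same.
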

\begin{proof}
Just notice that 
\[ \int_\Omega (u_\alpha-f)^2 = \int_{\Omega} \big(u^\rho_\alpha(x / \rho) - f(x)\big)^2 \dd x = \rho^{d} \int_{\rho^{-1} \Omega} \big(u^\rho_\alpha(\hat x) - f(\rho \hat x)\big)^2 \dd \hat x\]
and
\begin{align*}|u|^2_{H^s} &= \int_\Omega \int_\Omega \frac{|u^\rho_\alpha\left(x/\rho\right)-u^\rho_\alpha\left(y/\rho\right)|^2}{|x-y|^{d+2s}} \dd x \dd y + 2\int_\Omega \int_{\R^d \setminus \Omega} \frac{|u^\rho_\alpha\left( x/\rho\right)-u^\rho_\alpha\left(y/\rho\right)|^2}{|x-y|^{d+2s}} \dd x \dd y \\
& = \rho^{2d} \int_{\rho^{-1}\Omega} \int_{\rho^{-1}\Omega} \frac{|u^\rho_\alpha\left(\hat x\right)-u^\rho_\alpha\left(\hat y\right)|^2}{|\rho \hat x-\rho \hat y|^{d+2s}} \dd \hat{x} \dd \hat{y} \\&\qquad+ 2\rho^{2d}\int_{\rho^{-1}\Omega} \int_{\R^d \setminus \rho^{-1}\Omega} \frac{|u^\rho_\alpha\left(\hat x\right)-u^\rho_\alpha\left(\hat y\right)|^2}{|\rho \hat x-\rho \hat y|^{d+2s}} \dd \hat{x} \dd \hat{y}\\
& = \rho^{d-2s} |u^\rho_\alpha|^2_{H^s}.\qedhere
\end{align*}
\end{proof}

\begin{remark}\label{rem:scalingone}
From Lemma \ref{lem:scaling}, we see that to have a fixed factor $1$ on the $|\cdot|^2_{H^s}$ term we should consider $\rho = \alpha^{1/2s}$, which when $\alpha \to 0$ corresponds to denoising `zoomed in' versions $f^\rho$ of $f$ defined on $\rho^{-1} \Omega \nearrow \R^d$ . In that case for the transformation $\hat x = x/\rho$ a ball of radius $R$ in $\rho^{-1} \Omega$ corresponds to a ball of radius $R\alpha^{1/2s}$ in the original domain $\Omega$.
\end{remark}

To prove geometric properties a family of barriers is needed, which is introduced in \cite[Lemma 2]{SavVal11}:
\begin{lemma}[Barrier]
\label{lem:barrier}
There exists a constant $R_0(d,s)>0$ such that for any \[R\gs R_0(d, s)\] and $b>a$ there exists a constant $C_B:=C_B(d,s,b-a)$, nondecreasing in $b-a$, and a rotationally symmetric function $w \in \C(\R^d, [a+C_B R^{-2s},b])$ which satisfies
\begin{equation}\label{eq:barrierfar}w=b \text{ on }\R^d \setminus B(0,R),\end{equation}
\begin{equation}\label{eq:barrierlapbound}-(-\Delta)^s w(x) = \int_{\R^d} \frac{w(x)-w(y)}{|x-y|^{n+2s}} \dd y \ls w(x)-a\end{equation}
and finally
\begin{equation}\label{eq:barriervalbound}\frac{1}{C_B} \left(R+1-|x| \right)^{-2s} \ls w(x)-a \ls C_B \left(R+1-|x| \right)^{-2s}\end{equation}
for every $x \in B(0,R)$.
\end{lemma}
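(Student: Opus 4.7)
The plan is to construct $w$ as a rotationally symmetric function $w(x) = a + v(|x|)$, with the radial profile tuned so that the two-sided bound \eqref{eq:barriervalbound} is built into the definition and $w \equiv b$ outside $B(0,R)$. Guided by \eqref{eq:barriervalbound}, I would set
\[v(r) = \min\bigl\{\,b-a,\;\eta(R+1-r)^{-2s}\,\bigr\} \text{ for } r\ls R, \qquad v(r)=b-a \text{ for } r> R,\]
with a parameter $\eta = \eta(d,s,b-a)\in(0,1]$ chosen so that the cap value $b-a$ is attained at a radius $r^\star\in(0,R)$ separated from $R$ by a constant depending only on $(d,s,b-a)$. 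Then $w \in \C(\R^d)$ is automatic, the two-sided estimate \eqref{eq:barriervalbound} holds with $C_B = C_B(d,s,b-a)$ nondecreasing in $b-a$ (a larger range forces a larger cap and a larger $\eta$), and evaluating the lower bound at the origin gives $w \gs a + \eta(R+1)^{-2s}$, hence $w \in \C(\R^d,[a+C_B R^{-2s},b])$ after possibly enlarging $C_B$.

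The crux is verifying the pointwise supersolution inequality \eqref{eq:barrierlapbound} on $B(0,R)$. Writing $v(x):=v(|x|)$ by abuse of notation, and using that constants are pointwise $s$-harmonic, the claim reduces to $(-\Delta)^s v(x) + v(x) \gs 0$ for every $x \in B(0,R)$. In the annular region $B(0,R)\setminus B(0,r^\star)$ the function $v$ attains its cap $b-a$, so $v(x) - v(y) \gs 0$ for every $y \in \R^d$ and $(-\Delta)^s v(x) \gs 0$ trivially. In the interior ball $B(0,r^\star)$, where $v$ is smooth and coincides with $\eta(R+1-|x|)^{-2s}$, I would split the principal value integral at the scale $\ell(x) := R+1-|x|$ into three pieces. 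The \emph{local} piece over $|y-x|\ls\ell(x)/2$ is estimated by a second-order Taylor expansion in the symmetric-differences form of the integrand, giving a bound of order $\ell(x)^{-2s-2}\cdot\ell(x)^{2-2s} = \ell(x)^{-4s}$, which is $\ls C v(x)$. The \emph{mid-range} piece over $\{|y-x|>\ell(x)/2\}\cap B(0,R)$ is controlled trivially by $\|v\|_\infty$ times the tail of the kernel, again of order $\ell(x)^{-2s}$. The \emph{far-field} piece over $\R^d\setminus B(0,R)$, where $v(y)=b-a > v(x)$, contributes a strictly negative term of magnitude at least $(b-a-v(x))\cdot I(x)$ with $I(x):=\int_{B(0,R)^c}|x-y|^{-d-2s}\dd y$. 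A geometric estimate shows $I(x)\gs c\,\ell(x)^{-2s}$ uniformly in $x$, since the complement contains a half-space-like region at distance $\ell(x)$ from $x$ when $|x|$ is close to $R$, and contains a spherical shell at distance $\approx R$ when $|x|$ is small. Summing the three contributions and taking $R\gs R_0(d,s)$ large enough, the far-field negative gain dominates, yielding the desired inequality.

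I expect the main obstacle to be the uniform control of the tail integral $I(x)$ simultaneously across the two extreme regimes: $|x|\approx r^\star$, where $\ell(x)$ is of order one and the exterior of $B(0,R)$ locally resembles a half-space, versus $|x|$ small, where $\ell(x)\approx R$ and the relevant exterior mass sits at distance of order $R$. Balancing this tail gain against the local Taylor error, which itself deteriorates as $\ell(x)\to 1$, is exactly what pins down the threshold $R_0(d,s)$ and the precise admissible choice of $\eta$. Once these quantitative estimates are in hand, every property stated in the lemma follows by direct bookkeeping.
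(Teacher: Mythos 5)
Your proposal attempts to rebuild the barrier from scratch, whereas the paper's proof is a one-line reduction to the barrier already constructed by Savin and Valdinoci in \cite{SavVal11} (an affine change of the range from $[-1,1]$ to $[a,b]$ and setting their parameter $\tau=1$). So what has to be judged is whether your sketch could replace their computation, and as written it cannot: the verification of \eqref{eq:barrierlapbound} contains a decisive sign error. You correctly reduce the claim to $(-\Delta)^s v + v \gs 0$ on $B(0,R)$ --- this is indeed the inequality used downstream in Lemmas \ref{lem:compalphaone}--\ref{lem:ctdecay}, where one needs $-(-\Delta)^s w \ls w-a$, i.e.\ $\int_{\R^d} \big(v(y)-v(x)\big)|x-y|^{-d-2s}\dd y \ls C\,v(x)$. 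For \emph{this} inequality the far field $\R^d\setminus B(0,R)$, where $v\equiv b-a\gs v(x)$, is the principal adversary: it contributes the positive quantity $(b-a-v(x))\,I(x)$ to the left-hand side, equivalently it drives $(-\Delta)^s v(x)$ \emph{down}, and it is precisely this term that must be shown to fit under the budget $v(x)$. Your closing step, that ``the far-field negative gain dominates, yielding the desired inequality,'' has the roles reversed: if the negative far-field contribution to $(-\Delta)^s v(x)$ dominated everything else, you would conclude $(-\Delta)^s v(x)+v(x)<0$, the reverse inequality, and the comparison arguments of Section \ref{sec:indicatrix} would collapse.

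Even with the sign corrected, the proposed mechanism does not close. By your own accounting the far-field term has size of order $(b-a)\,\ell(x)^{-2s}$ while the budget is $v(x)=\eta\,\ell(x)^{-2s}$: these are of the \emph{same} order in $\ell(x)$, so the issue is a fight between constants, and enlarging $R_0$ does not resolve it (it only improves the local Taylor term $\ell(x)^{-4s}$, and only at points far from the cap radius). What actually rescues the Savin--Valdinoci barrier is the helpful \emph{negative} contribution of the region nearer the origin, where $v(y)<v(x)$; showing quantitatively that this gain together with $v(x)$ beats the far-field and mid-range losses is the entire content of their proof, and your sketch never invokes it. Two further inconsistencies: for the cap $b-a$ to be attained at an interior radius $r^\star<R$ you need $\eta>b-a$, which contradicts $\eta\in(0,1]$ in the only case the paper uses ($b-a=1$); if instead $\eta\ls b-a$ the minimum is never capped inside $B(0,R)$ and your ``annular region'' case is empty. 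And the second-order Taylor expansion is not valid on local balls of radius $\ell(x)/2\gs 1/2$ that reach the Lipschitz kink of $\min\{b-a,\eta(R+1-r)^{-2s}\}$. I would either cite the Savin--Valdinoci lemma as the paper does, or reproduce their estimate in full.
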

\begin{proof}
The barrier constructed in \cite[Lemma 2]{SavVal11} corresponds to $a = -1$ and $b=1$, and in their case a parameter $\tau$ appears multiplying the right hand side of \eqref{eq:barrierlapbound}, which in our case is always $\tau = 1$. Then, we just notice that multiplying $w$ by a constant changes the offset $a$ in \eqref{eq:barrierlapbound} but not the terms with $w$ by homogeneity, while in \eqref{eq:barriervalbound} it just affects the constant $C_B$.
\end{proof}

Our strategy is to rescale using Lemma \ref{lem:scaling} as indicated in Remark \ref{rem:scalingone}, to then consider comparison with barriers only in situations with $\alpha=1$. In that case we have:

\begin{lemma}[Avoidance]\label{lem:compalphaone}
Let $\D \subset \Om \subset \R^d$ with $\Om$ open and bounded, and $\u_n$ be the unique minimizer of 
\begin{equation}
u \mapsto \int_{\Om} \big(u(x) - \1_{\D}(x)-n(x)\big)^2 \dd x + |u|^2_{H^s(\Om)}.
\label{eq:onefracrof}
\end{equation}
Then for every $R>R_0:=R_0(d, s)$ (following the notation of Lemma \ref{lem:barrier}), radius $r<R$, level $\otheta > 0$ and tolerance $\eta < \otheta$ satisfying 
\begin{equation}\label{eq:boundcondition}C_B(R+1-r)^{-2s} \ls \otheta-\eta\end{equation} 
where $C_B:=C_B(d,s,1)$, we have that whenever 
\begin{equation}\label{eq:noisetolerance}\|\u_n - \u_0\|_{L^\infty} \ls \eta,\end{equation}
also 
\begin{equation}\label{eq:levelsetnottthere}\begin{gathered}\u_n \ls \otheta\text{ on }B(x,r)\text{ and }\u_n(x) < \otheta,\\\text{ for all }x \in \Om \setminus \D \text{ with }\max\big( d(x, \partial \D), d(x, \partial \Om)\big)>R.\end{gathered}\end{equation}
\end{lemma}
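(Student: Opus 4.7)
The approach is to place the Savin--Valdinoci barrier $w$ of Lemma~\ref{lem:barrier} (with $a=0$ and $b=1$) centered at $x$, show by comparison that $\u_0 \ls w$ everywhere, and then exploit the quantitative decay of $w$ near its center together with the noise bound \eqref{eq:noisetolerance} to conclude.

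First, I would invoke Lemma~\ref{lem:threshold} with $f=\1_\D$, which has $\essinf = 0$ and $\esssup = 1$, to obtain $\u_0 \in [0,1]$. The noise tolerance \eqref{eq:noisetolerance} then reduces the desired conclusion to showing that $\u_0 \ls \otheta - \eta$ on $B(x,r)$, with strict inequality at $x$, since then $\u_n \ls \u_0 + \eta \ls \otheta$ pointwise. Next, I would apply Lemma~\ref{lem:barrier} with $a = 0$, $b = 1$, centered at $x$, producing a barrier $w$ with $w = 1$ on $\R^d \setminus B(x, R)$, the pointwise bound $w(y) \ls C_B (R+1-|y-x|)^{-2s}$ inside $B(x,R)$, and the distributional supersolution inequality $-(-\Delta)^s w \ls w$ in that ball (in the Savin--Valdinoci normalization).

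The central step is the global comparison $\u_0 \ls w$. Outside $B(x, R)$ this is automatic, since $w = 1 \gs \u_0$ by Step 1. Inside, I would use the distance condition on $x$: since $x \in \Om \setminus \D$, if $d(x, \partial \D) > R$ then $B(x,R) \cap \D = \emptyset$, so the Euler--Lagrange equation for $\u_0$ reduces to
\[
\u_0 + C_0\,(-\Delta)^s \u_0 \;=\; 0 \quad \text{in } B(x, R)\cap \Om,
\]
with $C_0 = 2/C(d,s)$. The barrier $w$ is, up to a constant rescaling to match $C_0$, a supersolution of the same linear equation, and outside $B(x,R) \cap \Om$ one has $\u_0 \ls w$ ($\u_0 = 0$ outside $\Om$ and $w \gs 0$; $w = 1 \gs \u_0$ outside $B(x,R)$). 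Testing the weak formulation of the difference against $(\u_0 - w)^+$, which is supported in $B(x,R) \cap \overline{\Om}$ and therefore lies in $H^s_0(\overline{\Om})$, and running the same algebraic manipulation as in the proof of Proposition~\ref{prop:maxpple} (the Gagliardo double integral of $(\u_0 - w)^+$ is a nonnegative quadratic form, and the remaining zeroth-order term has the same sign), forces $(\u_0 - w)^+ \equiv 0$.

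Having established $\u_0 \ls w$, I would conclude by combining the quantitative bound for $w$ with the standing hypothesis \eqref{eq:boundcondition}: for any $y \in B(x,r)$,
\[
\u_0(y) \;\ls\; w(y) \;\ls\; C_B (R+1-|y-x|)^{-2s} \;\ls\; C_B (R+1-r)^{-2s} \;\ls\; \otheta - \eta,
\]
so $\u_n(y) \ls \u_0(y) + \eta \ls \otheta$. The strict inequality at $y = x$ is obtained from the strictly better bound $w(x) \ls C_B(R+1)^{-2s} < C_B (R+1-r)^{-2s} \ls \otheta - \eta$, since $r > 0$.

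The main obstacle I expect is Step 3, specifically reconciling the constants. The barrier's supersolution inequality is stated without the normalization factor $C(d,s)$, while the Euler--Lagrange equation for $\u_0$ carries $C_0 = 2/C(d,s)$ on the fractional Laplacian; a constant rescaling of $w$ (permitted by the homogeneity noted in the proof of Lemma~\ref{lem:barrier}) is needed so that $\u_0$ and the barrier satisfy comparable inequalities before the maximum principle argument can be run cleanly. A secondary point requiring care is the admissibility of $(\u_0 - w)^+$ as test function when $B(x, R)$ is not entirely contained in $\Om$: there the vanishing of $\u_0$ outside $\Om$ saves us, but it has to be spelled out.
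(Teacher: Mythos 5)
Your proposal is correct and follows essentially the same route as the paper: the barrier of Lemma~\ref{lem:barrier} with $a=0$, $b=1$ centered at $x$, the truncation bound $\u_0\in[0,1]$ to control the exterior of $B(x,R)$, the maximum principle applied to $\u_0-w$ (which you re-derive by testing with $(\u_0-w)^+$ rather than citing Proposition~\ref{prop:maxpple}), and finally \eqref{eq:barriervalbound} with \eqref{eq:boundcondition} and the noise tolerance. The two caveats you flag (the $C(d,s)$ normalization and the case $B(x,R)\not\subset\Om$) are real but are handled, respectively, by the paper's convention \eqref{eq:laplacianlocal} for the weak formulation and by exactly the observation you make; the only detail you omit is that passing from an a.e.\ to a pointwise conclusion at $x$ uses the interior H\"older continuity of $\u_0$ from Section~\ref{sec:regularity}.
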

\begin{proof}
Let $x \in \Om \setminus \D$ with $\max\big( d(x, \partial \D), d(x, \partial \Om)\big) >R$, and $w$ be the translation to $x$ of the barrier of Lemma \ref{lem:barrier} with $a = 0$ and $b=1$. Moreover, assume that $n=0$ for now. Then we have
\[-(-\Delta)^s w \ls w  \text{ on } B(x,R)\]
and since $d(x, \partial \Om) > R$ and $d(x, \partial \D) > R$ also the Euler Lagrange equation
\[ (-\Delta)^s \u_0 = \1_\D-\u_0 = -\u_0 \text{ on } B(x,R). \]
Summing up these two identities, we obtain
\[ (-\Delta)^s (\u_0-w) +\u_0- w \ls 0 \text{ on } B(x, R),\]
which allows (extending $\u_0$ by $0$ outside $\Om$) to apply the maximum principle of Proposition \ref{prop:maxpple} to $\u_0-w$ on $B(x,R)$, and taking into account that by Lemma \ref{lem:threshold} we have $\u_0-w \ls 0$ on $\R^d \setminus B(x,R)$, to finally obtain
\[\u_0(x) \ls w(x) \text{ for all }x \in B(x,R),\]
where we could conclude for all $x$ and not only a.e.~because the regularity results of Section \ref{sec:regularity} are applicable and $\u_0$ is in fact (locally) H\"older continuous. To conclude, notice that by \eqref{eq:noisetolerance} we have, again pointwise since $\u_n$ is also continuous, that
\[\u_n \ls w+\eta \text{ on }B(x,R),\]
but then \eqref{eq:boundcondition} and \eqref{eq:barriervalbound} imply
\begin{equation}\label{eq:wboundwitheta}w \ls C_B(R+1-r)^{-2s} \ls \otheta - \eta \text{ on }B(x,r) \subset B(x,R),\end{equation}
so that we get
\[B(x,r) \cap \{\u_n > \otheta\} = \emptyset,\]
as desired. To see that $\u_n(x) < \otheta$, just notice that the expression in \eqref{eq:wboundwitheta} is strictly increasing in $r$.
\end{proof}

\begin{remark}
Note that since there is a fixed universal lower bound for $R$ and also the tolerance $\eta$ needs to be accomodated in \eqref{eq:boundcondition}, a significant distance from $x$ to $\partial \hat{D}$ and $\partial \hat{\Omega}$ is needed. This will be the case when we apply this lemma below, because the domains $\hat{D}$ and $\hat{\Omega}$ will be rescaled versions of fixed ones, with freedom to choose the rescaling factor.
\end{remark}

We would also like to have a conclusion, for points in $D$, of the type $\u_n(x) > \otheta$. In this case, the Dirichlet condition on $\R^d \setminus \Om$ makes this harder and harder to satisfy if $x$ is very close to $\partial \Om$. To take this effect into account, we use the input $1-\1_D$ and obtain a lower bound on the corresponding output. This leads us to consider denoising of the constant function $1$. Again because of the Dirichlet condition on $\R^d \setminus \Om$, the resulting solution is not constant, and instead it must decay close to the boundary $\partial \Om$, so let us denote this result by $\Xih$. With it, we have then:

\begin{lemma}\label{lem:compalphaoneD}
In the situation of Lemma \ref{lem:compalphaone} but replacing \eqref{eq:boundcondition} by
\begin{equation}\label{eq:flippedboundcondition}C_B(R+1-r)^{-2s} \ls 1-\otheta-\eta,\end{equation} 
we also have that
\begin{equation}\label{eq:levelsetnottthereD}\begin{gathered}\u_n \gs \otheta - (1-\Xih) \text{ on }B(x,r)\text{ and }\u_n(x) > \otheta - \big(1-\Xih(x)\big), \\ \text{ for all }x \in \D\text{ with }d(x, \partial \D)>R.\end{gathered}\end{equation}
\end{lemma}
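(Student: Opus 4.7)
The plan is to reduce this lower bound for $\u_n$ to the upper bound already established in Lemma \ref{lem:compalphaone}, by exploiting the linearity of the Euler--Lagrange equation to reinterpret $\Xih - \u_0$ as the minimizer of a ``flipped'' denoising problem. Concretely, I would set $\hat{v} := \Xih - \u_0$ and observe that, since $\Xih$ and $\u_0$ are the unique minimizers in $H^s_0(\overline{\Om})$ of the quadratic energies with noiseless data $1$ and $\1_{\D}$ respectively, by uniqueness (Proposition \ref{prop:existence}) the difference $\hat{v}$ is itself the unique minimizer of the denoising functional with datum $\1_{D'}$ where $D' := \Om \setminus \D$, so that $(-\Delta)^s \hat{v} + \hat{v} = \1_{D'}$ weakly on $\Om$ and $\hat{v} = 0$ on $\Om^c$.

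Next, I would invoke Lemma \ref{lem:compalphaone} applied to $\hat{v}$ viewed as its own (zero-noise) solution, with $D'$ playing the role of $\D$, noise tolerance $\eta' = 0$, and threshold $\otheta' := 1 - \otheta - \eta$. For $x \in \D$ with $d(x, \partial \D) > R$, the ball $B(x, R)$ lies entirely inside $\D$, which gives $x \in \Om \setminus D'$ together with $d(x, \partial D') > R$ and $d(x, \partial \Om) > R$. The required condition $C_B(R+1-r)^{-2s} \ls \otheta' - \eta'$ in that lemma becomes exactly our assumption \eqref{eq:flippedboundcondition}, so Lemma \ref{lem:compalphaone} yields $\hat{v} \ls 1 - \otheta - \eta$ on $B(x, r)$ together with the strict inequality $\hat{v}(x) < 1 - \otheta - \eta$.

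To conclude, I would undo the substitution: from $\hat{v} = \Xih - \u_0$ we get $\u_0 = \Xih - \hat{v} \gs \Xih - (1 - \otheta - \eta) = \otheta + \eta - (1 - \Xih)$ on $B(x, r)$, and then the noise bound \eqref{eq:noisetolerance} in the form $\u_n \gs \u_0 - \eta$ delivers $\u_n \gs \otheta - (1 - \Xih)$ on $B(x, r)$; the strict version at $x$ is inherited along the same chain. The main point to verify is that the ingredients behind Lemma \ref{lem:compalphaone} (the truncation of Lemma \ref{lem:threshold} and the pointwise continuity from Section \ref{sec:regularity}) transfer to the flipped problem, but this is immediate since $\1_{D'} \in L^\infty(\Om)$ takes values in $[0, 1]$ and hence meets the hypothesis $\essinf \ls 0 \ls \esssup$ required by Lemma \ref{lem:threshold}.
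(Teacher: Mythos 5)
Your proof is correct and follows essentially the same route as the paper: both arguments pass to the ``flipped'' problem with data $1-\1_{\D}$, whose solution is $\Xih-\u_0$ by linearity, and then run the barrier comparison at the level $1-\otheta$. The only cosmetic difference is that you invoke Lemma \ref{lem:compalphaone} as a black box for the flipped problem rather than repeating the maximum-principle argument, which is a valid (and slightly cleaner) way to organize the same proof.
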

\begin{proof}
We consider as announced the denoising of $1-\1_{\D}$ with solution $\Xih - \u_{0}$ and choose a point $x \in \D$ with $d(x,\R^d \setminus \D) >R$, where we note that since $\D \subset \Om$, it is automatically true that $d(x,\partial \Om) >R$. 

Let $w$ again be the barrier of Lemma \ref{lem:barrier} with $a=0$ and $b=1$. We have then
\[-(-\Delta)^s w \ls w \text{ and }(-\Delta)^s (\Xih-\u_0) = 1-\1_D-(\Xih-\u_0) = -(\Xih-\u_0) \text{ on } B(x,R)\]
Summing these two equations, we obtain
\[(-\Delta)^s (\Xih-\u_0-w) +(\Xih-\u_0-w) \ls 0 \text{ on } B(x,R). \]
and moreover, since $\Xih, u_0 \in [0,1]$ which implies $\Xih-\u_0-w \ls 0$ on $\R^d \setminus B(x,R)$, we have
\[(\Xih-\u_0-w) \ls 0 \text{ on }\R^d \setminus B(x,R),\]
after extending $\Xih, u_0$ by $0$ outside $\Omega$. The maximum principle of Proposition \ref{prop:maxpple} implies then that the same inequality holds also in $B(x,R)$. The rest follows exactly as in Lemma \ref{lem:compalphaone}, taking into account that the level gets transformed to $1-\otheta$ which is the distance to the new energy well (where the fidelity term vanishes).
\end{proof}

We also use the barriers to quantify the boundary effects on $\Xih$:
\begin{lemma}\label{lem:ctdecay}
Let $\Xih$ be the unique minimizer of 
\begin{equation}
u \mapsto \int_{\Om} (u(x) - 1)^2 \dd x + |u|^2_{H^s(\Om)}.
\end{equation}
Then we have that for all $x$ with $d\big(x, \partial \Om\big) > R_0$,
\begin{equation}\label{eq:decay}1-\Xih(x) \ls C_B \Big(1+d\big(x, \partial \Om\big)\Big)^{-2s}.\end{equation}
\end{lemma}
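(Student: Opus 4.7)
The plan is to apply the barrier-comparison strategy of Lemmas \ref{lem:compalphaone} and \ref{lem:compalphaoneD}, with the barrier centered at the point where we want to estimate $\Xih$. Fix $x_0 \in \Om$ with $R := d(x_0, \partial \Om) > R_0$, so in particular $B(x_0, R) \subset \Om$. Let $w$ denote the translate to $x_0$ of the barrier provided by Lemma \ref{lem:barrier} with $a = 0$ and $b = 1$. Then $w = 1$ on $\R^d \setminus B(x_0, R)$, and in particular on $\Om^c$, while \eqref{eq:barrierlapbound} reads $(-\Delta)^s w + w \gs 0$ on $B(x_0, R)$.

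The Euler-Lagrange equation for $\Xih$ is $(-\Delta)^s \Xih + \Xih = 1$ on $\Om$. Because constants contribute nothing to the fractional Laplacian, setting $\psi := \Xih + w - 1$ and summing the two relations formally yields
\begin{equation}
(-\Delta)^s \psi + \psi \gs 0 \quad \text{on } B(x_0, R).
\end{equation}
Outside $B(x_0, R)$ the function $\psi$ is nonnegative: on $\Om^c$ we have $\Xih = 0$ and $w = 1$, so $\psi = 0$, while on $\Om \setminus B(x_0, R)$ we have $w = 1$ and $\Xih \gs 0$, the latter following by a truncation-from-below argument entirely analogous to Lemma \ref{lem:threshold}. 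One can then apply Proposition \ref{prop:maxpple} to $-\psi$, just as in the proofs of Lemmas \ref{lem:compalphaone} and \ref{lem:compalphaoneD}, to conclude $\psi \gs 0$ on all of $B(x_0, R)$. Evaluating at the center and using the upper bound in \eqref{eq:barriervalbound} yields
\begin{equation}
1 - \Xih(x_0) \ls w(x_0) \ls C_B(R + 1)^{-2s} = C_B\big(1 + d(x_0, \partial \Om)\big)^{-2s},
\end{equation}
which is \eqref{eq:decay}.

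I expect the only non-obvious step to be the same one already handled in Lemma \ref{lem:compalphaone}, namely the passage from the pointwise barrier inequality and the weakly formulated Euler-Lagrange equation for $\Xih$ to a valid weak inequality for $-\psi$ on $B(x_0, R)$ suitable as an input for Proposition \ref{prop:maxpple}. This is taken care of by the boundary H\"older regularity from Proposition \ref{prop:boundaryregularity} (applied with $g \equiv 1 \in L^\infty(\Om)$) and the continuity of $w$ guaranteed by Lemma \ref{lem:barrier}, which together ensure that $(-\psi)^+ \in H^s_0(\overline{B(x_0, R)})$ and justify the subtraction of the two inequalities at the variational level.
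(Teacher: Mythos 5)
Your proof is correct and follows essentially the same route as the paper: compare $\Xih$ near $x_0$ with a translate of the barrier of Lemma \ref{lem:barrier} via the maximum principle of Proposition \ref{prop:maxpple}, then read off the decay from \eqref{eq:barriervalbound} at the center. The only difference is the cosmetic normalization of the barrier ($a=0$, $b=1$ applied to $\Xih+w-1$ versus the paper's $a=-1$, $b=0$ applied to $-\Xih-w$), and your explicit remark that $\Xih\gs 0$ needs a truncation-from-below at level $0$ is in fact slightly more careful than the paper's direct citation of Lemma \ref{lem:threshold}.
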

\begin{proof} Using Lemma \ref{lem:threshold} we know that $\Xih$ takes values in $[0, 1]$. For convenience and using the linearity of the equations, we consider $\zeta = - \Xih$ and the barrier of Lemma \ref{lem:barrier} with $a=-1$ and $b = 0$. In that case setting $R:=d\big(x, \partial \Om\big)$ we have
\begin{alignat*}{3}
(-\Delta)^s \zeta &= -1-\zeta &&\text{ on }B(x,R) \text{ and }\\
-(-\Delta^s) w &\ls w+1 &&\text{ on }B(x,R),
\end{alignat*}
which added up and considering \eqref{eq:barrierfar} imply 
\[\begin{cases}(-\Delta)^s (\zeta - w)+(\zeta - w) \ls 0 &\text{ on }B(x,R) \\ (\zeta - w) \ls 0 &\text{ on }\R^d \setminus B(x,R).\end{cases}\]
By the maximum principle of Proposition \ref{prop:maxpple} and \eqref{eq:barriervalbound} we get then
\begin{align*}\zeta(x) &\ls w(x) \ls -1 + C_B(1+R)^{-2s}, \text{ or }\\\Xih(x) &\gs 1 - C_B(1+R)^{-2s}\end{align*}
and hence \eqref{eq:decay}.
\end{proof}

Undoing the rescaling in Lemmas \ref{lem:compalphaone}, \ref{lem:compalphaoneD} and \ref{lem:ctdecay}, we arrive at:
\begin{theorem}\label{thm:hausmaxprinc}Let $\Omega$ be a bounded domain with Lipschitz boundary and satisfying an exterior ball condition, $D\subset \Omega$ also with Lipschitz boundary and 
\begin{equation}\label{eq:donottouch}\inf_{x \in \partial D} d(x, \partial \Omega) > 0,\end{equation}
where we remark that $\partial D$ is considered in $\R^d$. Moreover, let $u_{\alpha, n}$ be the unique minimizer of 
\begin{equation}\label{eq:fracrof}
u \mapsto\int_{\Omega} \big(u(x) - \1_{D}(x)-n(x)\big)^2 \dd x + \alpha |u|^2_{H^s}.
\end{equation}
Then, if $\|n\|_\Hm \to 0$, $\alpha \to 0$ and the parameter choice is such that
\begin{equation}\label{eq:paramchoice}\frac{\|n\|_{L^q(\Omega)}}{\alpha} \to 0 \text{ for some } q > \frac{d}{2s},\end{equation}
then we have that for almost every $\theta \in (0,1)$
\[d_H(\partial \{u_{\alpha, n} > \theta\}, \partial D) \xrightarrow[\alpha \to 0]{} 0.\]
\end{theorem}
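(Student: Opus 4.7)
The plan is to pass to the rescaled, fixed-parameter problem via Lemma \ref{lem:scaling}, use the barrier-based avoidance estimates of Lemmas \ref{lem:compalphaone}, \ref{lem:compalphaoneD} and \ref{lem:ctdecay} to squeeze the minimizer strictly above or below $\theta$ outside a vanishing tubular neighbourhood of $\partial D$, and then close the argument via continuity of $u_{\alpha,n}$ together with the Lipschitz regularity of $\partial D$.

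Following Remark \ref{rem:scalingone}, I would take $\rho = \alpha^{1/2s}$ and set $\u_n(\hat x) := u_{\alpha,n}(\rho\hat x)$, $\hat n(\hat x) := n(\rho\hat x)$, $\D := \rho^{-1}D$, $\Om := \rho^{-1}\Omega$, so that $\u_n$ is the minimizer in \eqref{eq:onefracrof}. The noise tolerance $\eta_\alpha := \|\u_n - \u_0\|_{L^\infty} = \|u_{\alpha,n} - u_{\alpha,0}\|_{L^\infty}$ required by the avoidance lemmas can be bounded by linearity: $v := u_{\alpha,n} - u_{\alpha,0}$ is the weak solution of $\eqalpha(-\Delta)^s v + v = n$ on $\Omega$ with $v = 0$ outside. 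Testing against $G_k(v)$ and repeating the Stampacchia argument of Proposition \ref{prop:boundedsol} (the non-negative term $\int v\, G_k(v)$ is simply discarded, and the factor $\eqalpha$ passes into the bound) yields
\[\|v\|_{L^\infty(\Omega)} \ls C(\Omega,q,d,s)\bigl(\|n\|_{L^q(\Omega)}/\alpha\bigr)^{2d/(d-2s)},\]
which tends to $0$ by the parameter choice \eqref{eq:paramchoice}.

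With this in hand, I fix $\theta \in (0,1)$ and pick auxiliary levels $0 < \otheta_- < \theta < \otheta_+ < 1$. Once $\alpha$ is small enough that $\eta_\alpha$ is much smaller than both $\otheta_-$ and $1 - \otheta_+$, I select $R > R_0(d,s)$ and $r < R$ so that both \eqref{eq:boundcondition} at level $\otheta_-$ and \eqref{eq:flippedboundcondition} at level $\otheta_+$ are satisfied. Applied at $\hat x = x/\rho$, Lemma \ref{lem:compalphaone} yields $u_{\alpha,n}(x) = \u_n(\hat x) < \otheta_- < \theta$ for every $x \in \Omega \setminus D$ sufficiently far from $\partial D$ and from $\partial\Omega$, while combining Lemma \ref{lem:compalphaoneD} with Lemma \ref{lem:ctdecay} gives
\[u_{\alpha,n}(x) \gs \otheta_+ - C_B\bigl(1 + d(x,\partial\Omega)\alpha^{-1/2s}\bigr)^{-2s} > \theta\]
for every $x \in D$ with $d(x,\partial D) > R\alpha^{1/2s}$. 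Hypothesis \eqref{eq:donottouch} enters here to keep $d(x,\partial\Omega)$ bounded away from zero near $\partial D$, so that the required distance $R\alpha^{1/2s}$ to $\partial\Omega$ is eventually met and the boundary decay of $\Xih$ from Lemma \ref{lem:ctdecay} is asymptotically negligible.

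To conclude, $u_{\alpha,n}$ is continuous by Proposition \ref{prop:boundaryregularity}. Any $x \in \partial\{u_{\alpha,n}>\theta\}$ satisfies $u_{\alpha,n}(x) = \theta$, so the two strict bounds force such $x$ into the $R\alpha^{1/2s}$-neighbourhood of $\partial D$. Conversely, for $y \in \partial D$ and $\epsilon > 0$, the Lipschitz regularity of $\partial D$ and the exterior ball condition on $\Omega$ produce points $y_\pm \in B(y,\epsilon)$ with $y_+ \in D$, $y_- \notin D$ and $d(y_\pm,\partial D) \gs c\epsilon$ for some geometric $c > 0$; for $\alpha$ small the above bounds deliver $u_{\alpha,n}(y_+) > \theta > u_{\alpha,n}(y_-)$, whence by continuity $B(y,\epsilon)$ must meet $\partial\{u_{\alpha,n} > \theta\}$. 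Letting $\alpha \to 0$ and $\epsilon \to 0$ yields the Hausdorff convergence, with the restriction to almost every $\theta$ serving only to discard the negligible set of levels for which $\{u_{\alpha,n} = \theta\}$ has positive measure, the sole loophole in this intermediate-value step. The most delicate point of the plan is the $L^\infty$ noise bound in the second paragraph, as it is precisely what dictates the integrability threshold $q > d/(2s)$ in \eqref{eq:paramchoice}.
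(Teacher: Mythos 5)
Your overall strategy coincides with the paper's: rescale by $\alpha^{1/2s}$, control the noise in $L^\infty$ by applying the Stampacchia estimate to the equation satisfied by $u_{\alpha,n}-u_{\alpha,0}$ (this is exactly how the exponent $q>d/2s$ enters in the paper as well), and then sandwich $u_{\alpha,n}$ between barriers via Lemmas \ref{lem:compalphaone}, \ref{lem:compalphaoneD} and \ref{lem:ctdecay}. There is, however, one genuine gap. Lemma \ref{lem:compalphaone} only yields $u_{\alpha,n}<\theta$ at points of $\Omega\setminus D$ lying at distance greater than $R\alpha^{1/2s}$ from \emph{both} $\partial D$ and $\partial\Omega$: the ball $B(x,R)$ must stay inside the rescaled domain for the Euler--Lagrange equation to hold there, so the collar $\{x\in\Omega:\ d(x,\partial\Omega)\ls R\alpha^{1/2s}\}$ is not covered by either of your two bounds. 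By \eqref{eq:donottouch} that collar sits at a fixed positive distance from $\partial D$, so if $\{u_{\alpha,n}>\theta\}$ reached into it, $\sup_{x\in\partial\{u_{\alpha,n}>\theta\}}d(x,\partial D)$ would not tend to zero; your statement that ``the two strict bounds force such $x$ into the $R\alpha^{1/2s}$-neighbourhood of $\partial D$'' is therefore not justified. Your invocation of \eqref{eq:donottouch} only addresses points near $\partial D$ (where no upper bound is needed anyway), not points near $\partial\Omega$. The paper devotes a separate step to precisely this: comparing $u_{\alpha,n}$ with the denoised constant $\Xi_\alpha$ and using the up-to-the-boundary $\C^{0,s}$ estimate of Proposition \ref{prop:boundaryregularity} to show that $u_{\alpha,n}(x)>\theta$ forces $d(x,\partial\Omega)$ to exceed the width of the collar for $\alpha\ls\alpha_\theta$. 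Some argument of this kind must be added before the first half of the Hausdorff convergence is established.

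Your closing step for the reverse half, $\sup_{x\in\partial D}d\big(x,\partial\{u_{\alpha,n}>\theta\}\big)\to0$, genuinely differs from the paper's: you produce Lipschitz corkscrew points $y_\pm$ and run a connectedness/intermediate-value argument, whereas the paper deduces this half from $L^1$ convergence of the level sets (valid for a.e.\ $\theta$) combined with the density estimates \eqref{eq:densityestA} for $D$, citing \cite[Props.~2.2 and 2.6]{IglMer21}. Your variant is sound and in fact slightly stronger than you claim: a segment joining a point of the open set $\{u_{\alpha,n}>\theta\}$ to a point of the open set $\{u_{\alpha,n}<\theta\}$ must meet $\partial\{u_{\alpha,n}>\theta\}$ by connectedness, with no need to discard levels where $\{u_{\alpha,n}=\theta\}$ is large. (The exterior ball condition on $\Omega$ plays no role in producing $y_\pm$; only the Lipschitz regularity of $\partial D$ is used there.)
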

\begin{proof}
To maintain the strategy of Lemma \ref{lem:compalphaone} of using the maximum principle and the barrier $w$ but with some noise $n$ added to $f$, we will need to control the effect of the noise in $L^\infty$ norm. Denoting $u_{\alpha, n}$ and $u_{\alpha, 0}$ the corresponding solutions with and without noise, we have that
\begin{equation}\begin{cases}(-\Delta)^s (u_{\alpha, n} - u_{\alpha, 0}) + (u_{\alpha, n} - u_{\alpha, 0})/\alpha = n/\alpha &\text{ on }\Omega \\u_{\alpha, n}-u_{\alpha, 0} = 0 &\text{ on }\R^d \setminus \Omega\end{cases},\end{equation}
and on this equation we may apply the boundedness estimate of Proposition \ref{prop:boundedsol} to obtain
\[\|u_{\alpha, n} - u_{\alpha, 0}\|_{L^\infty(\Omega)} \ls C_S(\Omega, q, d, s) \left(\frac{\|n\|_{L^p(\Omega)}}{\alpha}\right)^{2d/(d-2s)} \text{ for each } q > \frac{d}{2s}.\]
This means that with the parameter choice we are able to enforce, for any $\eta >0$ and possibly looking further into the vanishing sequence of $\alpha$ and noise instances that
\[\|u_{\alpha, n} - u_{\alpha, 0}\|_{L^\infty(\Omega)} \ls \eta.\]
Now, we can apply Lemma \ref{lem:compalphaone} on the rescaled domain $\Om = \alpha^{-1/2s} \Omega$ with parameters (independent of $\alpha$) given by $\otheta = \theta$, $\eta = \theta/2$, and $r=R/2$ for a radius $R>0$ satisfying
\begin{equation}\label{eq:Rbound1}R>\max\big(C\theta^{-1/2s}-C, R_0\big)\end{equation}
for an adequate constant $C$ that could be made explicit by solving in \eqref{eq:boundcondition} with these parameters. The conclusion of Lemma \ref{lem:compalphaone}, taking into account Remark \ref{rem:scalingone} after Lemma \ref{lem:scaling} leads us then to
\[u_{\alpha, n}(x) \ls \theta \text{ for all }x \in D^c\text{ with }\max\big( d(x, \partial D), d(x, \partial \Omega)\big)>R\alpha^{1/2s},\]
from which we would like to deduce that
\[\sup_{x \in \{u_{\alpha, n} > \theta \}} d(x, D) \ls R \alpha^{1/2s} \text{ for }\alpha\text{ small enough},\]
which is one half of the definition \eqref{eq:hausdorffdist} of $d_H(\{u_{\alpha, n} > \theta \}, D)$. We cannot immediately conclude this though, since we need to additionally ensure that
\[\{u_{\alpha, n} > \theta \} \cap \{d(x,\partial \Omega) \ls R\alpha^{1/2s}\} = \emptyset \text{ for }\alpha\text{ small}.\]
For this, we can first denote by $\Xi_\alpha$ the result of denoising with input the constant function $1$, no noise and the given regularization parameter, and then use the comparison principle and the result of regularity up to the boundary of Proposition \ref{prop:boundaryregularity}, to obtain
\[\theta < u_{\alpha,n}(x) \ls (1+\eta)\,\Xi_\alpha(x) \ls (1+\eta) \big(d(x,\partial \Omega)\big)^s \|\Xi_\alpha\|_{\mathcal{C}^{0,s}(\overline{\Omega})} \ls \frac{C_G(1+\eta)}{\alpha} \big(d(x,\partial \Omega)\big)^s,\]
so that taking into account $\eta = \theta/2$, we have for all $x$ for which $u_{\alpha,n}(x) >\theta$ that
\begin{equation}\label{eq:spanishbound}d(x,\partial \Omega)> \left( \frac{2\theta}{2+\theta}\frac{1}{C_G}\alpha\right)^{1/s} > R\alpha^{1/2s},\end{equation}
for all $0<\alpha \ls \alpha_\theta$ and some $\alpha_\theta$ depending only on $\theta$. Note that $R$ depends itself on $\theta$, but still $\alpha_\theta > 0$ for all $\theta$, although $\alpha_\theta \to 0$ as $\theta \to 0$.

For the other half, we need a conclusion of the type $u_{\alpha, n}(x) > \theta$ for $x \in D$. Under the same rescaling onto $\Om = \alpha^{-1/2s} \Omega$, the function $\Xi_\alpha$ defined above transforms to $\Xih$ appearing in Lemmas \ref{lem:compalphaoneD} and \ref{lem:ctdecay}. From the latter we find then that for all $x \in D$
\begin{equation}\label{eq:Xibound}\begin{aligned}1-\Xi_\alpha(x)&\ls C_B\big(1+d(x,\partial \Omega)\alpha^{-1/2s}\big)^{-2s} \\&\ls C_B\big(1+\inf_{x \in D} d(x,\partial \Omega)\alpha^{-1/2s}\big)^{-2s} := B_D(\alpha) \xrightarrow[\alpha \to 0]{} 0,\end{aligned}\end{equation}
where we have used \eqref{eq:donottouch}. Since this uniform bound vanishes as $\alpha \to 0$, we can always restrict $\alpha$ so that $B_D(\alpha) < (1-\theta)/3$. We can then use this estimate to apply Lemma \ref{lem:compalphaoneD} with $\otheta=\theta+(1-\theta)/3$ (which is always stricly below $1$) and $\eta=(1-\theta)/3$, and imposing on $R$ the additional condition
\begin{equation}\label{eq:Rbound2}R>\max\big(C'(1-\theta)^{-1/2s}-C',R_0\big),\end{equation}
where we remark that the constant $C'$, derived from \eqref{eq:flippedboundcondition}, is independent of $\alpha$ but different than the one appearing in \eqref{eq:Rbound1}. As conclusion of the lemma, we finally get that 
\[u_{\alpha, n}(x) > \theta \text{ for all }x \in D,\]
and this in turns means that
\[\sup_{x \in \{u_{\alpha, n} \ls \theta \}} d\big(x, D^c\big) \ls R \alpha^{1/2s} \text{ for all }\alpha \text{ such that }B_D(\alpha)<(1-\theta)/3.\]
Since $\{u_{\alpha, n} \ls \theta \} = \{u_{\alpha, n} > \theta \}^c$, we can combine the two estimates above for a distance between boundaries (see \cite[Prop.~2.6]{IglMer21}, for example) and end up with
\[\sup_{x \in \partial \{u_{\alpha, n} > \theta \}} d(x, \partial D) \to 0.\]
To complete the Hausdorff convergence $d_H(\partial \{u_{\alpha, n} > \theta \}, \partial D) \to 0$ we also need that
\[\sup_{x \in \partial D} d\big(x, \partial \{u_{\alpha, n} > \theta \}\big) \to 0.\]
This is satisfied (see \cite[Props.~2.2 and 2.6]{IglMer21} for a proof) if $\{u_{\alpha, n} > \theta \}$ converges to $D$ in $L^1$ (which is true for a.e.~$\theta$, since $u_{\alpha,n} \to \1_D$ in $L^2$) and $D$ satisfies inner and outer density estimates as in \eqref{eq:densityestA}. These density estimates are in particular implied by $\partial D$ being either a strong Lipschitz boundary or a Lipschitz manifold.
\end{proof}

\begin{remark}
We note that in these arguments we needed to assume the conditions \eqref{eq:Rbound1} and \eqref{eq:Rbound2} in which $R\to \infty$ if $\theta \to 0$ or $\theta \to 1$. A restriction in $R$ from below in turn forces us to choose $\alpha$ smaller and smaller, so the speed of convergence one can obtain degenerates if $\theta$ is very close to the values $0$ and $1$ attained by $\1_D$. This motivates the results of Section \ref{sec:rates} below.
\end{remark}

\begin{remark}\label{rem:reallaplacianconv}
The proof of the result above relies just on comparison principles and the barriers of Lemma \ref{lem:barrier}. If instead of fractional Laplacian regularization one would use the $H^1$ seminorm, leading to the usual Laplacian, one could use the same proof just by replacing the barriers with the ones of \cite{CafCor95} (see the proof of Lemma 3 there). In the total variation case this kind of result with denoising and $\1_D$ as limit is true even without regularity assumptions on $D$, see \cite[Thm.~1.2]{IglMer21}.
\end{remark}

Such as a result is also straightforward to extend to piecewise constant functions on regular enough partitions:

\begin{cor}\label{cor:pwconst}
Assume that $f \gs 0$ is piecewise constant and with compact support on $\Omega$, that is, there are $0 =c_0 < c_1 < \ldots < c_N$ and $\Omega \supset \Omega_1 \supset \ldots \supset \Omega_N$ with $\inf_{x \in \Omega_1}  d(x, \partial \Omega)>0$ such that
\begin{equation}\label{eq:pwcf} f=\sum_{i=1}^N \big( c_i-c_{i-1} \big) \1_{\Omega_i}, \text{ so that }\Omega_i = \{ f > c_{i-1} \},\end{equation}
where the boundaries $\partial \Omega_i$ are Lipschitz, and $n, \alpha$ are such that the parameter choice condition \eqref{eq:paramchoice} holds. Then for $u_{\alpha, n}$ the minimizers of \eqref{eq:mingen} and $\theta \in (c_{i-1}, c_i)$ with $i=1, \ldots, N$ we have that
\[d_H(\partial \{u_{\alpha, n} > \theta\}, \partial \Omega_i) \xrightarrow[\alpha \to 0]{} 0.\]
\end{cor}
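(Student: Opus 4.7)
The plan is to adapt the argument of Theorem \ref{thm:hausmaxprinc}, using level-shifted barriers from Lemma \ref{lem:barrier} in combination with the linearity of the Euler--Lagrange equation for the quadratic functional. As in that proof, I first apply Lemma \ref{lem:scaling} with $\rho = \alpha^{1/2s}$, reducing the noiseless problem to $(-\Delta)^s \u_0 + \u_0 = \hat f$ on $\Om := \alpha^{-1/2s}\Omega$ with $\hat f = \sum_{i=1}^N (c_i-c_{i-1})\1_{\hat \Omega_i}$. A straightforward adaptation of Lemma \ref{lem:threshold}, truncating at $0$ and $c_N$, gives $\u_0 \in [0, c_N]$.

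For the outer half, fix $x_0 \in \Om \setminus \hat \Omega_i$ with $\max\big(d(x_0, \partial \hat \Omega_i), d(x_0, \partial \Om)\big) > R$, and apply Lemma \ref{lem:barrier} to obtain a barrier $w$ centered at $x_0$ with $a = c_{i-1}$ and $b = c_N$. Since $\hat f \leq c_{i-1}$ on $B(x_0, R)$, the equations for $\u_0$ and $w$ combine to yield $(-\Delta)^s(\u_0 - w) + (\u_0 - w) \leq 0$ on $B(x_0, R)$, while $\u_0 \leq c_N = w$ outside. Proposition \ref{prop:maxpple} then gives $\u_0(x) \leq w(x) \leq c_{i-1} + C_B(R+1-r)^{-2s}$ for all $x \in B(x_0, r)$, which is below $\theta$ for $R$ sufficiently large (depending on $\theta - c_{i-1} > 0$).

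For the inner half I exploit linearity: setting $\Xih^{(c_N)} := c_N \Xih$ (the denoising of the constant $c_N$), the difference $v := \Xih^{(c_N)} - \u_0$ solves the denoising equation with right-hand side $c_N - \hat f$. For $x_0 \in \hat \Omega_i$ with $d(x_0, \partial \hat \Omega_i) > R$, we have $c_N - \hat f \leq c_N - c_i$ on $B(x_0, R)$, so the analogous barrier comparison with $a = c_N - c_i$, $b = c_N$ yields $v(x) \leq c_N - c_i + C_B(R+1-r)^{-2s}$ on $B(x_0, r)$. Combined with the bound $\Xih^{(c_N)} \geq c_N(1-B(\alpha))$ from Lemma \ref{lem:ctdecay}, where $B(\alpha) \to 0$ uniformly on $\hat \Omega_i$ thanks to \eqref{eq:donottouch} after rescaling, this gives $\u_0(x) \geq c_i - c_N B(\alpha) - C_B(R+1-r)^{-2s}$, which exceeds $\theta$ for $R$ large and $\alpha$ small, since $\theta < c_i$.

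Noise is absorbed using $\|u_{\alpha,n} - u_{\alpha,0}\|_{L^\infty} \to 0$ from Proposition \ref{prop:boundedsol} and \eqref{eq:paramchoice}, as in Theorem \ref{thm:hausmaxprinc}. After unscaling, these pointwise bounds translate into $\{u_{\alpha,n} > \theta\}$ being contained in an $R\alpha^{1/2s}$-neighborhood of $\Omega_i$ (after also discarding a boundary layer of $\partial \Omega$ via Proposition \ref{prop:boundaryregularity}, exactly as in \eqref{eq:spanishbound}) and containing an $R\alpha^{1/2s}$-interior of $\Omega_i$. Hausdorff convergence of the boundaries then follows as in Theorem \ref{thm:hausmaxprinc}, using $L^1$ convergence $\1_{\{u_{\alpha, n} > \theta\}} \to \1_{\Omega_i}$ for a.e.\ $\theta$ (via Lemma \ref{lem:L2rates}, since $f \in H^s$ when $s < 1/2$ by Lipschitz regularity of the $\partial \Omega_i$) together with the density estimates implied by that same Lipschitz regularity. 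The main technical obstacle is simply bookkeeping: tracking how the barrier parameters must shift from $(0,1)$ as used in Lemma \ref{lem:compalphaone} to $(c_{i-1}, c_N)$ outside and, after the linearization through $\Xih^{(c_N)}$, to $(c_N - c_i, c_N)$ inside, and verifying that $\theta \in (c_{i-1}, c_i)$ supplies positive safety margins on both sides.
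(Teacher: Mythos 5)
Your proposal is correct and takes essentially the same route as the paper, whose proof of this corollary is a one-line reduction to Theorem \ref{thm:hausmaxprinc}: on the region where $\hat f\ls c_{i-1}$ compare with a barrier anchored at $a=c_{i-1}$, $b=c_N$, and on $\hat\Omega_i$ use linearity through the denoising of the constant $c_N$ (i.e.\ $c_N\Xih$) to compare $c_N\Xih-\u_0$ with a barrier anchored at $a=c_N-c_i$; your bookkeeping of the resulting margins $\theta-c_{i-1}$ and $c_i-\theta$ is right. One small slip: for the final half of the Hausdorff convergence you only need $L^1$ convergence of the level sets, which follows from $u_{\alpha,n}\to f$ in $L^2$ under the parameter choice \eqref{eq:paramchoice} for any $f\in L^2(\Omega)$; routing this through Lemma \ref{lem:L2rates} (which requires $f\in H^s_0(\overline\Omega)$ and hence $s<1/2$ for piecewise constant $f$) would impose a restriction on $s$ that neither the corollary nor the paper's argument actually needs.
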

\begin{proof}
The proof follows in a completely analogous manner, when considering $x \in \Omega_i \setminus \Omega_{i-1}$ with $\max\big(d(x, \partial \Omega_{i-1}), d(x, \partial \Omega_i)\big) > R \alpha^{1/2s}$. The barrier should then have $a = f(x) = c_i$ and be above $u_{\alpha, 0}$ far away from $x$, which is accomplished with $b = c_N = \max_\Omega f$.
\end{proof}

\begin{remark}
In Corollary \ref{cor:pwconst} we have used that every level set of $f$ should have a Lipschitz boundary. Note that this is a stronger assumption than the domains of constancy $\Omega_i \setminus \Omega_{i-1}$ having a Lipschitz boundary. One can easily construct a counterexample with the union of two tangent balls in the plane, for instance.
\end{remark}

\subsection{Convergence rates in Hausdorff distance for denoising with indicatrix ideal data}\label{sec:rates}
Since we have obtained our results by comparison with barriers with values between those attained by the piecewise constant ideal input $f$, the speed of the convergence one can obtain with this method depends on how close the level $\theta$ is to those values. This is reflected in the following convergence rates result:
\begin{theorem}\label{thm:convrates}In the situation of Theorem \ref{thm:hausmaxprinc} and with $s<1/2$, there is a constant $\alpha_0(\Omega, D, \theta)>0$ for which we have for $\theta \in (0,1)$ the estimate
\begin{equation}\label{eq:hausrate}
d_H\big(\partial \{u_{\alpha, n} > \theta\}, \partial D\big) \ls C(\Omega, D)\max\left(\frac{1}{\theta},\ \frac{1}{(1-\theta)}\right)^{\min(2/d,\,1/2s)}\alpha^{1/d},
\end{equation}
for all $\alpha \ls \alpha_0(\Omega, D, \theta)$.
\end{theorem}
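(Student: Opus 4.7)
The plan is to refine the proof of Theorem~\ref{thm:hausmaxprinc} by tracking all quantitative dependencies on $\theta$ in the barrier argument, and to supplement it with the $L^2$ convergence rate of Lemma~\ref{lem:L2rates}. The latter applies with $f=\1_D$ because $s<1/2$ and $\partial D$ Lipschitz give $|\1_D|^2_{H^s} = \per_{2s}(D) \ls \per(D) < +\infty$, yielding $\|u_{\alpha,n} - \1_D\|_{L^2(\Omega)} \ls C(D)\,\alpha^{1/2}$. Combined with Lemma~\ref{lem:threshold} and the parameter choice~\eqref{eq:paramchoice}, Proposition~\ref{prop:boundaryregularity} furnishes the H\"older bound $\|u_{\alpha,n}\|_{\C^{0,s}(\R^d)} \ls L_\alpha$ with $L_\alpha \sim \alpha^{-1}$.

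For the half $\sup_{x \in \partial\{u_{\alpha,n} > \theta\}} d(x, \partial D)$, I pick such an $x_0$ with $d(x_0, \partial D) = h$, WLOG $x_0 \in \overline D$ (the opposite case being symmetric with $\theta \leftrightarrow 1-\theta$); continuity yields $u_{\alpha,n}(x_0) = \theta$. The H\"older estimate then gives $u_{\alpha,n} \ls (1+\theta)/2$ on $B(x_0, r_\alpha)$ with $r_\alpha := c\bigl((1-\theta)\alpha\bigr)^{1/s}$, so when $h \ls r_\alpha$ we have $|u_{\alpha,n} - \1_D| \gs (1-\theta)/2$ on $B(x_0, h) \subset D$, and the $L^2$ rate forces $h \ls C'(1-\theta)^{-2/d}\alpha^{1/d}$. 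When $h > r_\alpha$, applying Lemma~\ref{lem:compalphaoneD} on the rescaled domain $\alpha^{-1/2s}\Omega$ (via Lemma~\ref{lem:scaling}) with $R \sim (1-\theta)^{-1/2s}$ chosen so that~\eqref{eq:flippedboundcondition} holds, the barrier forces $u_{\alpha,n}(x_0) > \theta$ strictly, giving a contradiction unless $h \ls C''(1-\theta)^{-1/2s}\alpha^{1/2s}$.

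For the complementary half $\sup_{y \in \partial D} d(y, \partial\{u_{\alpha,n} > \theta\})$, if $d(y, \partial\{u_{\alpha,n} > \theta\}) > h'$ then $B(y, h')$ sits entirely in $\{u_{\alpha,n} > \theta\}$ or in $\{u_{\alpha,n} \ls \theta\}$. The density estimates~\eqref{eq:densityestA} give $|B(y, h') \cap D|, |B(y, h') \cap D^c| \gs c_D(h')^d$, so $|u_{\alpha,n} - \1_D| \gs \min(\theta, 1-\theta)$ on the mismatched subset, and the $L^2$ rate yields $h' \ls C\,\max(\theta^{-1}, (1-\theta)^{-1})^{2/d}\alpha^{1/d}$; alternatively, I can pick $z \in B(y, h'/2)$ on the ``wrong'' side of $\partial D$ with $d(z, \partial D) \sim h'$ (possible via the interior/exterior cone conditions implied by $D$ Lipschitz) and apply Lemma~\ref{lem:compalphaone} or~\ref{lem:compalphaoneD} at $z$ to obtain the sharper bound $h' \ls C\,\max(\theta^{-1}, (1-\theta)^{-1})^{1/2s}\alpha^{1/2s}$. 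Finally, to collapse all contributions into~\eqref{eq:hausrate}, I fix $\alpha_0(\Omega, D, \theta)$ small enough that both the $L^2$-based bound $\max(\cdot)^{2/d}\alpha^{1/d}$ and the barrier-based bound $\max(\cdot)^{1/2s}\alpha^{1/2s}$ are $\ls C\max(\cdot)^{\min(2/d, 1/2s)}\alpha^{1/d}$: since $2s \ls d$ gives $\alpha^{1/2s} \ls \alpha^{1/d}$, this reduces to a condition of the form $\alpha \ls \min(\theta, 1-\theta)^{\gamma(d,s)}$ with $\gamma(d,s) \gs 0$, automatic when $d < 4s$. The main obstacle I anticipate is this final unification, where the $\min(2/d, 1/2s)$ in the exponent precisely selects whichever of the $L^2$+density/H\"older mechanism ($2/d$) and the barrier mechanism ($1/2s$) provides the better dependence on $\min(\theta, 1-\theta)^{-1}$, while the uniform $\alpha^{1/d}$ rate is obtained only at the price of a $\theta$-dependent restriction on $\alpha$.
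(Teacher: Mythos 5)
Your proposal is correct and follows essentially the same strategy as the paper: the $L^2$ rate of Lemma \ref{lem:L2rates} combined with the density estimates of $D$ yields the $\max(\theta^{-1},(1-\theta)^{-1})^{2/d}\alpha^{1/d}$ bound on $\sup_{x\in\partial D}d(x,\partial\{u_{\alpha,n}>\theta\})$, the barrier machinery of Theorem \ref{thm:hausmaxprinc} with $\theta$-explicit radii yields the $\max(\theta^{-1},(1-\theta)^{-1})^{1/2s}\alpha^{1/2s}$ bound on the other half, and the two are unified under a $\theta$-dependent smallness restriction on $\alpha$, exactly as in the paper. The only superfluous element is the H\"older-based case $h\ls r_\alpha$ in the first half (the barrier alone covers it, and the global $\C^{0,s}$ bound of Proposition \ref{prop:boundaryregularity} would anyway require $n\in L^\infty$ rather than $L^q$), but dropping that branch leaves the argument intact.
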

\begin{proof}
Using the Markov/Chebyshev inequality we get for $t\gs 0$ and any $f$ that,
\[|\{u_{\alpha, n} - f \gs t\}| \ls \frac{1}{t^2} \int_{\{(u_{\alpha, n}-f) \gs t\}} |u_{\alpha, n}-f|^2 \ls \frac{1}{t^2} \Vert u_{\alpha, n}-f\Vert_{L^2}^2. \]
For $t \ls 0$, we have as well
\[|\{u_{\alpha, n} - f \ls t\}| \ls \frac{1}{t^2} \int_{\{(u_{\alpha, n}-f)\ls t\}} |u_{\alpha, n}-f|^2 \ls \frac{1}{t^2} \Vert u_{\alpha, n}-f\Vert_{L^2}^2. \]
Now, since we assume that $f$ is an indicatrix $f=\1_D$, one can rewrite, if $t \gs 0$,
\[| \{u_{\alpha, n} > t+1\} \cap D| + |\{u_{\alpha, n} > t\} \setminus D| \ls \frac{1}{t^2} \Vert u_{\alpha, n}-\1_D\Vert_{L^2}^2\]
which in particular implies for $\theta > 0$ that
\[| \{u_{\alpha, n} > \theta\} \setminus D| \ls \frac{1}{\theta^2} \Vert u_{\alpha, n}-\1_D\Vert_{L^2}^2.\]
For $t \in (-1,0)$, we obtain similarly
\[|D \setminus \{u_{\alpha, n} > t+1\}| \ls |\{u_{\alpha, n} - \1_D \ls t\}| \ls \frac{1}{t^2} \Vert u_{\alpha, n}-\1_D\Vert_{L^2}^2\]
which also reads, for $\theta = (t+1) \in (0,1)$,
\[|D \setminus \{u_{\alpha, n} > \theta \}| \ls \frac{1}{(\theta-1)^2} \Vert u_{\alpha, n}-\1_D\Vert_{L^2}^2.\]
Collecting these results, we obtain for all $\theta \in (0,1)$ that
\[ |D \Delta \{u_{\alpha, n} > \theta\} | \ls \left( \frac{1}{\theta^2} + \frac{1}{(\theta-1)^2} \right) \Vert u_{\alpha, n}-\1_D\Vert_{L^2}^2.\] 
Now, if $D$ satisfies the density estimates
\begin{equation}\label{eq:densityestD}\frac{|D \cap B(x,r)|}{|B(x,r)|}\gs C_D \text{ and }\frac{|B(x,r) \setminus D|}{|B(x,r)|}\gs C_D \text{ for all }r\ls r_0 \text{ and } x \in \partial D,\end{equation}
we infer that
\[|D \Delta \{u_{\alpha, n} > \theta\} | \gs C_D |B(0,1)| r^d \text{ whenever } r \ls \min \left( \sup_{x \in \partial D} d\big(x, \partial \{u_{\alpha, n} > \theta \}\big), r_0 \right),\]
but since $|D \Delta \{u_{\alpha, n} > \theta\} | \to 0$, for $\alpha$ small enough we end up for each $\theta \in (0,1)$ with
\begin{align*}\sup_{x \in \partial D} d\big(x, \partial \{u_{\alpha, n} > \theta \}\big) &\ls (C_D|B(0,1)|)^{-1/d} |D \Delta \{u_{\alpha, n} > \theta\}|^{1/d} \\&\ls \left(\frac{2}{C_D}\right)^{1/d}\max\left( \frac{1}{\theta^{2/d}},\ \frac{1}{(1-\theta)^{2/d}} \right) \Vert u_{\alpha, n}-\1_D\Vert_{L^2}^{2/d}.\end{align*}
Combining this with the estimate of Lemma \ref{lem:L2rates}, which can be applied since a bounded set with Lipschitz boundary has finite perimeter and
\[\alpha \gs C\|n\|_{L^p} \gs C|\Omega|^{(p-2)/2p}\|n\|_{L^2},\]
we obtain
\begin{equation}\label{eq:markovhausest}\sup_{x \in \partial D} d\big(x, \partial \{u_{\alpha, n} > \theta \}\big) \ls C(\Omega, D) \max\left( \frac{1}{\theta^{2/d}},\ \frac{1}{(1-\theta)^{2/d}} \right) \alpha^{1/d}.\end{equation}
For the other half of the Hausdorff distance, we have by recalling \eqref{eq:Rbound1} and \eqref{eq:Rbound2} (whose validity determines $\alpha_0(D, \Omega, \theta)$ through \eqref{eq:spanishbound} and \eqref{eq:Xibound} which quantify the boundary effects), and applying Theorem \ref{thm:hausmaxprinc} that
\[\sup_{x \in \partial \{u_{\alpha, n} > \theta \}} d(x, \partial D) \ls C(\Omega, D) \max\left( \frac{1}{\theta^{1/2s}},\ \frac{1}{(1-\theta)^{1/2s}} \right)\alpha^{1/2s},\]
which, since $s \in (0, 1/2)$, $d\gs 2$ and we can assume $\alpha \in (0,1)$, can be combined with \eqref{eq:markovhausest} to get \eqref{eq:hausrate}.
\end{proof}

\begin{remark}\label{rem:noreallaplacianrates}
This method will not work when regularizing with the usual Laplacian, since we have used $s< 1/2$, which permits $\1_D \in H^s(\R^d)$ and the application of Lemma \ref{lem:L2rates}. It is easily adapted to the total variation, though: under the source condition and for adequate choices of exponents in the data term, one has density estimates for the level sets of solutions with uniform constant along the sequence and level $\theta$, and also holding for the limit $\1_D$ (see \cite[Thm.~4.5]{IglMer20} for a result in any dimension). In that case, the density estimates are also true beyond denoising, but to follow the argument of the proof of Theorem \ref{thm:convrates} we start with a rate of convergence in $L^2$ norm, which again is in principle only easily obtained in the denoising case (see \cite{Val21} for some slightly more general settings where it also holds, though).
\end{remark}

\section{Geometric convergence to continuous ideal data}\label{sec:convtocont}
We now consider Hausdorff convergence when the ideal data is continuous, so that it becomes flatter by the same rescaling used for Theorem \ref{thm:hausmaxprinc}, and with nontrivial operators $A$. This section is (except for Corollary \ref{cor:hausmaxprinc3} regarding denoising) devoted to the proof of the following result:
\begin{theorem}\label{thm:hausmaxprincA}
Let $\Omega$ be an open bounded set with Lipschitz boundary and satisfying the exterior ball condition, and the operator $A$ be such that $A^\ast$ maps $\Hm$ to $L^q(\Omega)$ with $q>d/2s$ compactly as well as $\Hm$ to $L^\infty(\Omega)$ continuously. Moreover, assume that for $u^\dag \gs 0$ the source condition $A^\ast z \in \partial [\frac12 |\cdot|^2_{H^s}](u^\dag)$ holds. 

Then, for $u_{\alpha, n}$ the unique minimizer of \eqref{eq:mingen} and for almost every $\theta >0$ and all $\eps >0$, we have  as $\|n\|_\Hm \to 0$ and $\alpha \to 0$ and with $\|n\|_\Hm/\alpha \ls C$ that
\[\sup_{x \in \{u_{\alpha, n} > \theta \}} d\big(x, \{u^\dag \gs \theta - \eps\}\big) \to 0, \text{ and }\, \sup_{x \in \{u_{\alpha, n} \ls \theta \}} d\big(x, \{u^\dag \ls \theta + \eps\}\big) \to 0.\]
Moreover, if for a.e.~$\theta \in \R \setminus \{0\}$ we have that $\{u^\dag > \theta\}$ satisfies inner and outer density estimates in the sense of \eqref{eq:densityestA}, then the Hausdorff convergence
\[d_H\big(\partial \{u_{\alpha, n} > \theta\}, \partial \{u^\dag > \theta\}\big) \to 0\]
holds for a.e.~$\theta \in \R \setminus \{0\}$ for which
\[\lim_{\eps\to 0} d_H\big(\{u^\dag \gs \theta - \eps\}, \{u^\dag > \theta\}\big) = 0,\]
that is, we need to exclude levels $\theta$ corresponding to `flat' parts of $u^\dag$.
\end{theorem}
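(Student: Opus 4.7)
The plan is to upgrade the subgradient convergence from Proposition \ref{prop:convergenceoflaplacians} into uniform convergence $u_{\alpha,n}\to u^\dag$ in $L^\infty(\R^d)$, after which the three statements of the theorem become essentially elementary.

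The first step is a uniform H\"older bound. Combining the optimality condition \eqref{eq:ELweak} with the weak formulation \eqref{eq:weakforsubg}, the minimizer $u_{\alpha,n}$ satisfies a weak equation of the form $(-\Delta)^s u_{\alpha,n}=R_{\alpha,n}$ on $\Omega$ with zero exterior data, where
\[R_{\alpha,n} \;=\; -\frac{C(d,s)}{2\alpha}A^\ast(Au_{\alpha,n}-f)\;+\;\frac{C(d,s)}{2\alpha}A^\ast n.\]
The bound \eqref{eq:buroshest} from the proof of Proposition \ref{prop:convergenceoflaplacians} gives $\Vert Au_{\alpha,n}-f\Vert_{\Hm}/\alpha\ls C$, and the parameter choice $\Vert n\Vert_{\Hm}/\alpha\ls C$ is in force. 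Since $A^\ast\colon\Hm\to L^\infty(\Omega)$ is assumed continuous, both summands of $R_{\alpha,n}$ are uniformly bounded in $L^\infty(\Omega)$. The boundary H\"older regularity result of \cite{RosSer14}, in the same spirit as Proposition \ref{prop:boundaryregularity} but for the equation without the linear term, then yields a uniform bound of $u_{\alpha,n}$ in $\C^{0,s}(\R^d)$. Because $u_{\alpha,n}$ vanishes outside $\Omega$, Arzel\`a--Ascoli gives relative compactness in $C^0(\overline{\Omega})$.

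The second step is to identify the limit. The Bregman rate $D_{A^\ast z}(u_{\alpha,n},u^\dag)=O(\alpha)$ from Proposition \ref{prop:convergenceoflaplacians}, combined with the identification carried out in Section \ref{sec:bregman}, reads $|u_{\alpha,n}-u^\dag|_{H^s}^2=O(\alpha)$, and thus yields $L^2$ convergence $u_{\alpha,n}\to u^\dag$ via the Sobolev inequality \eqref{eq:sobineq}. The uniform limit of any convergent subsequence must therefore coincide with $u^\dag$ (which is itself H\"older continuous, since the source condition forces $A^\ast z\in L^q(\Omega)$ with $q>d/(2s)$), and by uniqueness the whole family satisfies $\Vert u_{\alpha,n}-u^\dag\Vert_{L^\infty(\R^d)}\to 0$.

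The third step deduces level set convergence. For each $\eps>0$, once $\Vert u_{\alpha,n}-u^\dag\Vert_{L^\infty}<\eps$, any $x\in\{u_{\alpha,n}>\theta\}$ satisfies $u^\dag(x)>\theta-\eps$ and hence belongs to $\{u^\dag\gs\theta-\eps\}$, so the first supremum in the theorem is exactly zero; the second supremum is treated symmetrically. Combining the two inclusions yields the sandwich $\{u^\dag>\theta+\eps\}\subset\{u_{\alpha,n}>\theta\}\subset\{u^\dag\gs\theta-\eps\}$, valid for $\alpha$ small. Letting $\eps\to 0$ collapses both enclosures to $\{u^\dag>\theta\}$ in Hausdorff distance by the no-flat-parts assumption, and the density estimates on $\{u^\dag>\theta\}$ transfer this proximity to the boundaries via \cite[Props.~2.2 and 2.6]{IglMer21}, exactly as at the end of the proof of Theorem \ref{thm:hausmaxprinc}. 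The layer-cake formula ensures $L^1$ convergence $\{u_{\alpha,n}>\theta\}\to\{u^\dag>\theta\}$ for a.e.\ $\theta$, which is the input needed by those propositions.

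The main obstacle is the first step: the noise contribution $\alpha^{-1}A^\ast n$ is only bounded, not vanishing, so the right-hand side of the fractional equation does not converge in any strong norm. The saving fact is that $A^\ast\colon\Hm\to L^\infty(\Omega)$ is continuous, which is stronger than mere boundedness into $L^q$ and is precisely what the boundary H\"older theory needs to produce uniform $\C^{0,s}$ control. Once that control is in hand, compactness combined with the $L^2$ limit coming from the Bregman estimate upgrades the convergence to $L^\infty$, which is then more than enough for the geometric conclusions.
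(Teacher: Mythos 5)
Your argument is correct, but it takes a genuinely different route from the paper's. The paper proves this theorem by rewriting the problem as denoising of the perturbed data $f_{\alpha,n}=\alpha v_{\alpha,n}+u_{\alpha,n}$, showing $\|f_{\alpha,n}-u^\dag\|_{L^\infty}\to 0$ (Proposition \ref{prop:uniformconv}), and then running the barrier/maximum-principle machinery of Proposition \ref{prop:hausmaxprinc2} (Lemmas \ref{lem:compalphaonef} and \ref{lem:compalphaoneDf}, the rescaling of Lemma \ref{lem:scaling}, and the boundary decay of Lemma \ref{lem:ctdecay}) on that denoising problem. You instead establish $\|u_{\alpha,n}-u^\dag\|_{L^\infty}\to 0$ directly and observe that all three conclusions then follow from the elementary sandwich $\{u^\dag>\theta+\eps\}\subset\{u_{\alpha,n}>\theta\}\subset\{u^\dag\gs\theta-\eps\}$ together with the same endgame the paper uses (condition \eqref{eq:notflat} for the upper enclosure, Lemma \ref{lem:nestedhausdorff} for the nested sets $\{u^\dag\ls\theta+\eps\}$, and the density estimates via \cite[Props.~2.2 and 2.6]{IglMer21}). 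This is legitimate: the paper's own proof of Proposition \ref{prop:uniformconv} already records $\|u_{\alpha,n}-u^\dag\|_{L^\infty}\to 0$ as an intermediate step (obtained there by applying the Stampacchia estimate of Proposition \ref{prop:boundedsol} to the equation for $u_{\alpha,n}-u^\dag$, whose right-hand side $v_{\alpha,n}-A^\ast z$ tends to zero in $L^q$ by the compactness assumption), so your proposal essentially points out that this intermediate fact already implies the theorem. Your alternative derivation of the uniform convergence --- uniform $\C^{0,s}$ bounds from \cite{RosSer14} using the $L^\infty$-continuity of $A^\ast$ and the bound on $(Au_{\alpha,n}-f)/\alpha$ coming from \eqref{eq:buroshest}, Arzel\`a--Ascoli, and identification of the limit through the Bregman rate of Proposition \ref{prop:convergenceoflaplacians} and Section \ref{sec:bregman} --- is also sound, and notably does not use the compactness of $A^\ast$ into $L^q$ at all. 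What the paper's barrier route buys is uniformity of method: the same Proposition \ref{prop:hausmaxprinc2} also covers the pure denoising case of Corollary \ref{cor:hausmaxprinc3} and parallels Theorem \ref{thm:hausmaxprinc}, settings in which uniform convergence of $u_{\alpha,n}$ to the (discontinuous, or merely continuous) data is not available and the barriers are genuinely needed; it moreover localizes the level sets at the explicit scale $R\alpha^{1/2s}$. For the present theorem, with the source condition and the strong mapping assumptions on $A^\ast$ in force, your shorter argument suffices.
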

\begin{proof}
It follows by combining Propositions \ref{prop:uniformconv} and \ref{prop:hausmaxprinc2} below. Note that because $A^\ast$ maps into $L^\infty(\Omega)$, and that the source condition is equivalent to the weak formulation \eqref{eq:weakforsubg} (with $v = A^\ast z$) of a Dirichlet problem with zero outer/boundary condition, we can apply Proposition \ref{prop:boundaryregularity} to infer that $u^\dag$ is H\"older continuous up to the boundary and $u^\dag=0$ on $\partial \Omega$.
\end{proof}

\begin{remark}\label{rem:sourcecondtv}
For total variation regularization and assuming the source condition the same convergence also holds, also for more general cases of operators: this is the main result of both \cite{IglMerSch18} and \cite{IglMer20}.
\end{remark}

We first show that under these assumptions, we can also treat regularization with nontrivial operators as if it were denoising. 
\subsection{From inversion of strongly smoothing operators to denoising}\label{sec:denoisingtrick}
As in Section \ref{sec:optimality}, the optimality condition reads
\begin{equation}
v_{\alpha, n} := -\frac{1}{\alpha} A^\ast \big(Au_{\alpha, n}-f-n\big) \in L^q(\Omega) \text{ with }q>\frac{d}{2s}, \text{ and }(-\Delta)^s u_{\alpha,n}=v_{\alpha, n}\text{ on }\Omega.
\end{equation}
Our goal is to transform this into a denoising problem for a perturbed measurement. To this end, we define
\begin{equation}\label{eq:regularizationbydenoising}
f_{\alpha, n} := \alpha v_{\alpha, n} + u_{\alpha, n}, \text{ so that } (-\Delta)^s u_{\alpha,n}=\frac{1}{\alpha}\big(f_{\alpha, n} - u_{\alpha, n}\big) \text{ in } \Omega,
\end{equation}
and notice that the last equation means that
\[u_{\alpha,n} = \argmin_{u\in L^2(\Omega)} \int_{\Omega} (u-f_{\alpha,n})^2 + \alpha \vert u \vert_{H^s}^2\]
where the denoising problem with $L^2$ fidelity term and data $f_{\alpha, n}$ can also be posed to lead to \eqref{eq:regularizationbydenoising}, since the Sobolev inequality \eqref{eq:sobineq} can be used and
\[p<2<\frac{2d}{d-2s}, \text{ so }f_{\alpha, n} \in L^2(\Omega).\]

\begin{prop}\label{prop:uniformconv}
Assume that $A^\ast$ maps $\Hm$ to $L^q$ compactly as well as $\Hm$ to $L^\infty$ continuously, the source condition $A^\ast z \in \partial [\frac12 |\cdot|^2_{H^s}](u^\dag)$ and the parameter choice $\|n\|_\Hm/\alpha \ls C$ holds. Then for the $f_{\alpha, n}$ defined in \eqref{eq:regularizationbydenoising} we have
\begin{equation}\label{eq:Linftyconv}\big\|f_{\alpha, n} - u^\dag\big\|_{L^\infty} \to 0 \text{ as }\alpha,\|n\|_\Hm \to 0.\end{equation}
\end{prop}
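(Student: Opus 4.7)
The proof plan splits $f_{\alpha,n}-u^\dag$ according to the definition \eqref{eq:regularizationbydenoising} as
\[f_{\alpha,n}-u^\dag = \alpha v_{\alpha,n}+(u_{\alpha,n}-u^\dag) = A^\ast(f+n-Au_{\alpha,n})+(u_{\alpha,n}-u^\dag),\]
and I treat the two summands separately. Using the continuity of $A^\ast:\Hm\to L^\infty$, the first summand admits the bound
\[\|\alpha v_{\alpha,n}\|_{L^\infty}\ls C\,\|f+n-Au_{\alpha,n}\|_{\Hm} \ls C\bigl(\|n\|_{\Hm}+\|Au_{\alpha,n}-f\|_{\Hm}\bigr),\]
which is $O(\alpha)$ by combining the bound on $\|Au_{\alpha,n}-f\|_{\Hm}$ extracted from the proof of Proposition \ref{prop:convergenceoflaplacians} with the parameter choice $\|n\|_{\Hm}/\alpha\ls C$. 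The task therefore reduces to showing $\|u_{\alpha,n}-u^\dag\|_{L^\infty}\to 0$.

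For this reduction I would use the Arzel\`a--Ascoli theorem: a uniform bound on $\{u_{\alpha,n}\}$ in $\C^{0,s}(\overline{\Omega})$ combined with convergence to $u^\dag$ in some $L^r(\Omega)$ with $r<\infty$ forces uniform convergence, since any subsequence has a further sub-sequence converging in $\C^0(\overline{\Omega})$, whose limit must agree with the $L^r$ limit $u^\dag$ and hence determine the whole sequence. The weaker convergence is an immediate by-product of Proposition \ref{prop:convergenceoflaplacians}: the Bregman distance identifies with $|u_{\alpha,n}-u^\dag|^2_{H^s}$ in the present quadratic setting (Section \ref{sec:bregman}), so $|u_{\alpha,n}-u^\dag|_{H^s}=O(\alpha^{1/2})\to 0$, and the Sobolev inequality \eqref{eq:sobineq} upgrades this to strong convergence in $L^{2d/(d-2s)}(\Omega)$.

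The technical heart of the argument, and the step I expect to be the main obstacle, is the uniform H\"older bound. The starting observation is that $\|v_{\alpha,n}\|_{L^\infty}\ls C$ uniformly, obtained by dividing the estimate of the first paragraph by $\alpha$. Since $u_{\alpha,n}$ weakly solves a fractional Dirichlet problem whose right hand side is proportional to $v_{\alpha,n}$ and whose exterior datum is homogeneous, rewriting the equation as $(-\Delta)^s u_{\alpha,n}+u_{\alpha,n}=c\,v_{\alpha,n}+u_{\alpha,n}$ shows that it suffices to additionally establish a uniform $L^\infty$ bound on $\{u_{\alpha,n}\}$ in order to apply Proposition \ref{prop:boundaryregularity} and conclude the uniform $\C^{0,s}(\overline{\Omega})$ estimate. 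This preliminary $L^\infty$ bound is supplied by the Stampacchia-type argument behind Proposition \ref{prop:boundedsol} (or its original form in \cite{LeoPerPriSor15} for the equation $(-\Delta)^s u=g$), applied with the uniform $L^q$ bound on $v_{\alpha,n}$ inherited from its $L^\infty$ bound and $|\Omega|<\infty$ for some $q>d/2s$. The delicate point is to run this bootstrap without circularity, deriving the $L^\infty$ bound on $u_{\alpha,n}$ purely from the $L^q$ control on $v_{\alpha,n}$ that is itself independent of $\alpha$, so that the resulting H\"older estimate is likewise uniform along the vanishing sequence.
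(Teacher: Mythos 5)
Your proposal is correct, and its first half coincides with the paper's argument: the same decomposition $f_{\alpha,n}-u^\dag=\alpha v_{\alpha,n}+(u_{\alpha,n}-u^\dag)$, with $\|\alpha v_{\alpha,n}\|_{L^\infty}\to 0$ obtained from the boundedness of $(Au_{\alpha,n}-f)/\alpha$ in $\Hm$ (via \eqref{eq:buroshest}) and the continuity of $A^\ast$ into $L^\infty$. Where you genuinely diverge is in proving $\|u_{\alpha,n}-u^\dag\|_{L^\infty}\to 0$. The paper exploits linearity: $u_{\alpha,n}-u^\dag$ weakly solves the fractional Dirichlet problem with right-hand side proportional to $v_{\alpha,n}-A^\ast z$, which tends to zero strongly in $L^q$ by Proposition \ref{prop:convergenceoflaplacians} (this is precisely where compactness of $A^\ast$ into $L^q$ enters), so a single application of the Stampacchia estimate of Proposition \ref{prop:boundedsol} to the difference yields uniform convergence, with an explicit rate in terms of $\|v_{\alpha,n}-A^\ast z\|_{L^q}$. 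Your route instead combines the Bregman rate $|u_{\alpha,n}-u^\dag|_{H^s}=O(\alpha^{1/2})$ (which needs only the source condition and the parameter choice, not compactness), the Sobolev inequality \eqref{eq:sobineq}, a uniform $\C^{0,s}(\overline{\Omega})$ bound, and Arzel\`a--Ascoli plus uniqueness of the limit. This is longer and non-quantitative, but it buys something real: the compactness of $A^\ast$ into $L^q$ is never used, so for this particular proposition your hypotheses are weaker. Your bootstrap is also not circular, as you feared: the uniform $L^\infty$ (hence $L^q$, since $|\Omega|<\infty$) bound on $v_{\alpha,n}$ comes from \eqref{eq:buroshest} alone; the Stampacchia argument in the form of \cite[Thm.~13]{LeoPerPriSor15} (for $(-\Delta)^su=g$, no linear term) then bounds $\|u_{\alpha,n}\|_{L^\infty}$ uniformly; and only afterwards does Proposition \ref{prop:boundaryregularity}, applied to $(-\Delta)^s u_{\alpha,n}+u_{\alpha,n}=c\,v_{\alpha,n}+u_{\alpha,n}$ with its now uniformly bounded right-hand side, deliver the equicontinuity needed for Arzel\`a--Ascoli. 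Note finally that the subsequence argument identifies the uniform limit with $u^\dag$ only a.e., so you should record that $u^\dag$ itself is continuous (which follows from the source condition, $A^\ast z\in L^\infty$ and Proposition \ref{prop:boundaryregularity}) to conclude.
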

\begin{proof}
Given the source condition, the parameter choice, and the assumed compactness of $A^\ast$ into $L^q$, by Proposition \ref{prop:convergenceoflaplacians} we have $v_{\alpha, n} \to A^\ast z$ strongly in $L^q$. By the Stampacchia boundedness estimate of Proposition \ref{prop:boundedsol} this implies also $\|u_{\alpha, n} - u^\dag\|_{L^\infty} \to 0$. Moreover, in the proof of Proposition \ref{prop:convergenceoflaplacians} we also saw that the set $\{(Au_{\alpha, n}-f)/\alpha\}_\alpha$ is bounded in $\Hm$, so using the assumption that $A^\ast$ maps continuously into $L^\infty$ we have that $\|v_{\alpha, n}\|_{L^\infty} \ls C$. Combined with the previous convergence and in light of \eqref{eq:regularizationbydenoising}, we end up with \eqref{eq:Linftyconv}.
\end{proof}

\begin{example}\label{ex:convolutions}Let us assume that $p,q$ satisfy $p^{-1}+q^{-1}=1$ and $q>d/2s$, and that we have a convolution operator $A:L^p(\Omega) \to L^2(\Omega + \Sigma) = \Hm$ defined by
\[Au(x)=\int_{\R^d} K(x-y) u(y) \dd y \text{ with }K \in L^q(\Sigma),\]
where $\Sigma \subset \R^d$ is bounded. Then by the H\"older inequality we have \[\|Au\|_{L^\infty(\Omega + \Sigma)} \ls \|K\|_{L^q(\Sigma)}\|u\|_{L^p(\Omega)},\]
so $A$ maps into $L^\infty$ continuously. Now, the adjoint is the operator $A^\ast: L^2(\Omega + \Sigma) \to L^q(\Omega)$ given by
\begin{equation}\label{eq:adjointformula}A^\ast g(x) =\1_\Omega(x) \cdot \int_{\R^d} K(y-x) g(y) \dd y, \end{equation}
which analogously is bounded into $L^\infty(\Omega)$ on $L^2(\Omega + \Sigma) \subset L^p(\R^d)$. Moreover, $A^\ast$ is compact from $\Hm \to L^q(\Omega)$ as well. To see this, consider a $L^2$-weakly converging sequence $g_n \rightharpoonup g$ and notice that by \eqref{eq:adjointformula} and $K \in L^q(\Sigma) \subset L^2(\Sigma)$, we have that $A^\ast g_n(x) \to A^\ast g(x)$ for a.e.~$x \in \Omega + \Sigma$ as well. Moreover since $g_n$ is bounded in $L^2$ and $\Sigma$ is bounded, we also have 
\[|(A^\ast g_n)(x)|^2 \ls C \|K\|_{L^2(\Sigma)}^2 \ls C \|K\|_{L^q(\Sigma)}^2,\]
and since all the $A^\ast g_n$ have a common compact support we may use a constant function in the dominated convergence theorem. 
\end{example}

\begin{example}\label{ex:inversesourceprobs}
Another class of examples which fits these assumptions are inverse source problems in which the operator $A$ maps the right hand side of a linear elliptic PDE problem on a bounded domain $\Omega \subset \R^d$ with smooth boundary to its solution.

For instance, in the most basic case of an elliptic Dirichlet problem, the adjoint is again of the same type \cite[Lem.~2.24]{Tro10}. This means that for $d=2,3$ we also have that $A^\ast$ is continuous from $\mathcal{H}=L^2(\Omega)$ into $L^\infty(\Omega)$, by the classical Stampacchia boundedness estimate analogous to Proposition \ref{prop:boundedsol}, which holds for right hand side in $L^r(\Omega)$ with $r > d/2$. Compactness of $A^\ast$ into $L^q(\Omega)$ follows by elliptic regularity and Rellich-Kondrachov for $H^2(\Omega)$ automatically when $d \ls 4$, and otherwise under the additional condition
\[\frac{2d}{d-4}>q>\frac{d}{2s}, \text{ that is }s > \frac{d}{4}-1.\]
\end{example}

\subsection{Denoising with uniformly converging data}\label{sec:flat}
With Section \ref{sec:denoisingtrick} and Proposition \ref{prop:uniformconv} in mind, we now treat the case of denoising in which the data is uniformly continuous and uniformly converging. The structure of the proofs is analogous to those of Lemmas \ref{lem:compalphaone} and \ref{lem:compalphaoneD} and Theorem \ref{thm:hausmaxprinc}, but now we need to take care of the modulus of continuity of the data:
\begin{lemma}\label{lem:compalphaonef}
Let $\Om \subset \R^d$ be open but otherwise arbitrary, $\f \in C\big(\overline{\Om}\big)$ with $\f \gs 0$, $\max_{\Om} \f = 1$, and modulus of continuity $\omega_{\f}$. Assume that $\Vert \f_\alpha - \f \Vert_{L^\infty} \to 0$ as $\alpha \to 0$ and let $\u_\alpha$ be the unique minimizer of 
\begin{equation}
u \mapsto \int_{\Om} \big(u(x) - \f_\alpha(x)\big)^2 \dd x + |u|^2_{H^s(\Om)}.
\label{eq:onefracroff}
\end{equation}
Then for any $R > R_0:=R_0(d,s)$ as defined in Lemma \ref{lem:barrier}, a point $x_0 \in \Om$ with $d(x_0, \partial \Om)>R$, radius $0<r<R$, level $\otheta > \f(x_0)$ and tolerance $\eta < \otheta - \f(x_0)$ satisfying
\begin{equation}\label{eq:boundconditionf}C_B(R+1-r)^{-2s} \ls \otheta-\f(x_0)-\eta \text{ and }\omega_{\f}(R) \ls \frac{\eta}{2}\end{equation}
where $C_B:=C_B(d,s,1)$, we have that whenever 
\begin{equation}\label{eq:noisetolerancef}\|\f_\alpha - \f\|_{L^\infty} < \frac{\eta}{2},\end{equation}
also
\begin{equation}\label{eq:levelsetnotttheref}\u_\alpha \ls \otheta\text{ on }B(x_0,r), \text{ and }\  \u_\alpha(x_0) < \otheta.\end{equation}
\end{lemma}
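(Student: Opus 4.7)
My plan is to mirror the barrier argument of Lemma \ref{lem:compalphaone}, but now tuning the barrier's lower parameter to the local value $\f(x_0)$ rather than to a global constant, and absorbing the oscillation $\omega_\f(R)$ of $\f$ over $B(x_0, R)$ together with the noise error $\|\f_\alpha - \f\|_{L^\infty}$ into the slack $\eta$. Concretely, I would translate the barrier $w$ of Lemma \ref{lem:barrier} to $x_0$, setting $a := \f(x_0) + \eta$ and $b := a + 1$; the choice $b - a = 1$ matches the constant $C_B(d,s,1)$ appearing in the hypothesis \eqref{eq:boundconditionf}, and renders all barrier estimates usable with exactly the constant stated there. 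Because $d(x_0, \partial \Om) > R$, the whole ball $B(x_0, R)$ sits inside $\Om$, so on it $\u_\alpha$ weakly satisfies the Euler--Lagrange equation $(-\Delta)^s \u_\alpha + \u_\alpha = \f_\alpha$.

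\textbf{Key inequalities.} Combining $(-\Delta)^s w + w \gs a$ from \eqref{eq:barrierlapbound} with the equation for $\u_\alpha$ gives
\[(-\Delta)^s(\u_\alpha - w) + (\u_\alpha - w) \ls \f_\alpha - a \text{ on } B(x_0, R).\]
The crux is then to recognize that the bound $\omega_\f(R) \ls \eta/2$ in \eqref{eq:boundconditionf} together with \eqref{eq:noisetolerancef} give $\f_\alpha(x) \ls \f(x_0) + \omega_\f(R) + \|\f_\alpha-\f\|_{L^\infty} < \f(x_0) + \eta = a$ strictly on $B(x_0, R)$, so the right-hand side above is nonpositive. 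The maximum principle of Proposition \ref{prop:maxpple} then yields $\u_\alpha \ls w$ on $B(x_0, R)$, provided we also know $\u_\alpha \ls w = b$ outside $B(x_0, R)$. This latter global bound is the one genuinely new difficulty.

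\textbf{Outer bound and conclusion.} Unlike in Lemma \ref{lem:compalphaone}, Lemma \ref{lem:threshold} does not apply directly because the continuous datum $\f_\alpha$ need not satisfy $\essinf \f_\alpha \ls 0$. Instead, I would bound $\u_\alpha$ uniformly by comparing (via Corollary \ref{cor:comppple}) with the solution $\tilde v$ of $(-\Delta)^s \tilde v + \tilde v = c$ on $\Om$ and zero exterior condition, where $c := \esssup_\Om \f_\alpha \ls 1 + \eta/2$, and then applying Proposition \ref{prop:maxpple} to $\tilde v - c$ (whose equation closes and whose exterior value $-c$ is nonpositive) to conclude $\tilde v \ls c$. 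This yields $\u_\alpha \ls c < b$ globally and hence $\u_\alpha - w \ls 0$ on $\R^d \setminus B(x_0, R)$. Once $\u_\alpha \ls w$ is established on $B(x_0, R)$, the upper value-bound in \eqref{eq:barriervalbound} together with \eqref{eq:boundconditionf} deliver, for $x \in B(x_0, r)$,
\[\u_\alpha(x) \ls w(x) \ls a + C_B(R+1-|x-x_0|)^{-2s} \ls \otheta,\]
with strict inequality at $x_0$ thanks to the strict monotonicity of $(R+1-|x-x_0|)^{-2s}$ in $|x-x_0|$. Interior H\"older regularity from Section \ref{sec:regularity}, applicable since $\f_\alpha \in L^\infty \subset L^q$ for all $q > d/2s$, justifies interpreting these inequalities pointwise rather than a.e. The principal obstacle is therefore the outer bound on $\u_\alpha$, which replaces the simple truncation of Lemma \ref{lem:threshold} used in the indicatrix setting.
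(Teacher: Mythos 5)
Your proof is correct and follows essentially the same barrier argument as the paper: translate the Savin--Valdinoci barrier to $x_0$, apply the maximum principle on $B(x_0,R)$, and control the oscillation via $\omega_{\f}(R)\ls\eta/2$ and the data perturbation via $\|\f_\alpha-\f\|_{L^\infty}<\eta/2$. The only differences are cosmetic: you absorb both error terms into the barrier offset $a=\f(x_0)+\eta$ and work with $\u_\alpha$ directly, whereas the paper keeps $a=\f(x_0)$, first compares the clean-data solution $\u$ with $w+\omega_{\f}(R)$ and then passes to $\u_\alpha$ by the comparison principle with constants (which also supplies the outer bound that you instead obtain by comparing with $\esssup_{\Om}\f_\alpha$); both routes close.
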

In condition \eqref{eq:boundconditionf}, having a fixed lower bound on $R$ as well as an upper bound for it depending on the modulus of continuity might seem very restrictive, and it is. This means that we can only apply this lemma to functions that `change very slowly', which will be precisely the case when $\f$ is obtained from a uniformly continuous function by rescaling with a small factor, as we do in Proposition \ref{prop:hausmaxprinc2} below.
\begin{proof}[Proof of Lemma \ref{lem:compalphaonef}]
Let $w$ be again the translation to $x_0$ of the barrier of Lemma \ref{lem:barrier}, this time with $a = \f(x_0)$ and $b=\max_{\Om}\f=1$. Moreover, we first consider $\hat f$ instead of $\f_\alpha$ (the solution is denoted by $\u$). Then, on $B(x_0,R)$, we have
\[-(-\Delta)^s w \ls w - \f(x_0) \]
as well as the Euler Lagrange equation
\[ (-\Delta)^s \u = \f-\u \text{ on } B(x_0, R). \]
Summing up, we obtain
\[ (-\Delta)^s (\u-w) - \f +\u-w+\f(x_0) \ls 0,\]
which, using $\f \ls \f(x_0) + \omega_{\f}(R)$ on $B(x_0, R)$, writes
\[ (-\Delta)^s (\u-w) - \omega_{\f}(R) +\u- w \ls 0,\]
or
\[ (-\Delta)^s \left(\u-w - \omega_{\f}(R) \right) +\left( \u- w - \omega_{\f}(R)\right) \ls 0,\]
so that we can apply the maximum principle to $\u-w - \omega_{\f}(R)$ on $B(x_0, R)$ and take into account that by Lemma \ref{lem:threshold} we have $\u-w \ls 0$ on $\R^d \setminus B(x_0, R)$, to obtain that
\[\u(x) \ls w(x) + \omega_{\f}(R) \ls w(x) + \frac{\eta}{2} \text{ for all }x \in B(x_0, R).\]
To conclude, notice that as soon as $ \f-\epsilon\ls \f_\alpha \ls \f+\epsilon$, we have thanks to Corollary \ref{cor:comppple} and taking into account that the constant function with value $\epsilon$ solves $(-\Delta)^s u + u = \epsilon$ with itself as outer/boundary condition in any domain, also $\Vert \u - \u_\alpha\Vert_{L^\infty} \ls \epsilon$. This implies, thanks to \eqref{eq:boundconditionf}, the inequality
\[\Vert \u - \u_\alpha\Vert_{L^\infty} < \frac \eta 2\]
but then the last inequality combined with \eqref{eq:barriervalbound} implies, since $C_B$ is nondecreasing in the amplitude of the barrier, that
\begin{equation}\label{eq:wboundwithetaf}\begin{aligned}w &\ls C_B\big(d, s, 1-\f(x_0)\big)(R+1-r)^{-2s} \\&\ls C_B\big(d, s, 1\big)(R+1-r)^{-2s} < \otheta - \eta \ \;\text{ on }B(x_0, r) \subset B(x_0, R),\end{aligned}\end{equation}
so that
\[B(x_0, r) \cap \{\u_\alpha \gs \otheta\} = \emptyset,\text{ and }\u_\alpha(x_0)<\otheta.\qedhere\]
\end{proof}

We will also need the `flipped' comparison, analogous to Lemma \ref{lem:compalphaoneD}:
\begin{lemma}\label{lem:compalphaoneDf}
In the situation of Lemma \ref{lem:compalphaonef} but with $\f(x_0) >\otheta$, $\eta < \f(x_0) - \otheta$ and replacing \eqref{eq:boundconditionf} by
\begin{equation}\label{eq:flippedboundconditionf}C_B(R+1-r)^{-2s} \ls \f(x_0)-\otheta-\eta \text{ and }\omega_{\f}(R) \ls \frac{\eta}{2}\end{equation} we have that
\begin{equation}\label{eq:levelsetnottthereDf}\u_\alpha \gs \otheta - (1-\Xih) \text{ on }B(x_0,r)\text{ and }\u_\alpha(x_0) > \otheta - \big(1-\Xih(x_0)\big).\end{equation}
\end{lemma}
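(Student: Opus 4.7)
The strategy is to mirror the relationship between Lemmas \ref{lem:compalphaone} and \ref{lem:compalphaoneD}: I will work with $\Xih - \u_\alpha$, which by linearity of the fractional Dirichlet problem solves the same equation as $\u_\alpha$ but with right-hand side $1 - \f_\alpha$ in place of $\f_\alpha$. A barrier placed between $a = 1 - \f(x_0)$ and $b = 1$ will bound $\Xih - \u_\alpha$ from above on $B(x_0, r)$, and rearranging will yield the lower bound on $\u_\alpha$ claimed in \eqref{eq:levelsetnottthereDf}. Note that the amplitude $b - a = \f(x_0) \ls 1$, so by monotonicity of $C_B$ in the amplitude the constant $C_B = C_B(d,s,1)$ appearing in \eqref{eq:flippedboundconditionf} is indeed the one relevant for \eqref{eq:barriervalbound}.

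On $B(x_0, R)$ I would add the barrier inequality \eqref{eq:barrierlapbound} to the Euler--Lagrange equation for $\Xih - \u_\alpha$, and control the deviation of the forcing from its target value $1 - \f(x_0)$ by
\[\big(1 - \f_\alpha(x)\big) - \big(1 - \f(x_0)\big) \ls \omega_{\f}(R) + \|\f_\alpha - \f\|_{L^\infty} \ls \eta,\]
using both parts of \eqref{eq:flippedboundconditionf} together with \eqref{eq:noisetolerancef}. This produces
\[(-\Delta)^s\big(\Xih - \u_\alpha - w - \eta\big) + \big(\Xih - \u_\alpha - w - \eta\big) \ls 0 \quad \text{on } B(x_0, R).\]

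To apply the weak maximum principle (Proposition \ref{prop:maxpple}) I still need $\Xih - \u_\alpha - w - \eta \ls 0$ outside $B(x_0, R)$. There $w = b = 1$; moreover the truncation Lemma \ref{lem:threshold} gives $\Xih \ls 1$, and, since $\f_\alpha$ is within $\eta/2$ of $\f \in [0,1]$ in $L^\infty$, it also gives $\u_\alpha \gs -\eta/2$, while outside $\Om$ both functions vanish. In either case $\Xih - \u_\alpha - w - \eta \ls -\eta/2 \ls 0$. Proposition \ref{prop:maxpple} then yields $\Xih - \u_\alpha \ls w + \eta$ on $B(x_0, R)$, and combining with \eqref{eq:barriervalbound} and \eqref{eq:flippedboundconditionf} gives $w \ls 1 - \otheta - \eta$ on $B(x_0, r)$, so $\Xih - \u_\alpha \ls 1 - \otheta$ there, which rearranges to \eqref{eq:levelsetnottthereDf}. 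The strict inequality at $x_0$ itself will follow because the upper estimate in \eqref{eq:barriervalbound} evaluated at the center (where $|x - x_0| = 0$) is strictly smaller than its value on $\partial B(x_0, r)$, exactly as in the preceding lemmas.

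The only substantive departure from Lemma \ref{lem:compalphaoneD} is that $\f_\alpha$ is not guaranteed to lie exactly in $[0, 1]$, so a slack of order $\|\f_\alpha - \f\|_{L^\infty}$ must be absorbed into $\eta$ in the outer comparison; this is precisely what \eqref{eq:noisetolerancef} is designed to allow. I do not expect any new analytical obstacle beyond those already handled in Lemma \ref{lem:compalphaonef}.
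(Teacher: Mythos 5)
Your proof is correct and follows essentially the same route as the paper's, which simply reruns the argument of Lemmas \ref{lem:compalphaoneD} and \ref{lem:compalphaonef} for $\Xih - \u_\alpha$ viewed as the denoising of $1-\f_\alpha$, with the barrier placed between $1-\f(x_0)$ and $1$ and the level transformed to $1-\otheta$. The only cosmetic difference is bookkeeping: you absorb the perturbation $\|\f_\alpha-\f\|_{L^\infty}<\eta/2$ directly into the forcing-term deviation before invoking the maximum principle, whereas the paper (via Lemma \ref{lem:compalphaonef}) first runs the barrier comparison for the noiseless data $\f$ and then transfers to $\f_\alpha$ using the comparison principle of Corollary \ref{cor:comppple}; both arguments spend the same $\eta/2+\eta/2$ budget and reach the same conclusion.
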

\begin{proof}
Similarly to Lemma \ref{lem:compalphaoneD}, we consider denoising of $1-\f_\alpha$ with solution $\Xih - \u_\alpha$. Since the function $1-\f_\alpha$ again takes values in $[0, 1]$, we can compare with the same barrier and follow the same argument to obtain a conclusion for the level $1-\otheta$ of $\Xi - \u_\alpha$. Notice that since we are comparing the possible value $1-\otheta$ with the input value $1-\f_\alpha(x_0)$, it is $1-\otheta - (1-\f(x_0))=\f(x_0)-\otheta$ that appears in condition \eqref{eq:flippedboundconditionf}.
\end{proof}

Again undoing the rescaling, we arrive at:

\begin{prop}\label{prop:hausmaxprinc2}Let $\Omega$ be an open bounded set with Lipschitz boundary and satisfying the exterior ball condition and let $f \in \C(\overline{\Omega})$ with $f=0$ on $\partial \Omega$, $f_\alpha$ with
\begin{equation}\label{eq:linftyconv}\|f_\alpha - f\|_{L^\infty} \xrightarrow[\alpha \to 0]{} 0,\end{equation}
and $u_\alpha$ be the unique minimizer of 
\begin{equation}\label{eq:fracroff}
u \mapsto \int_{\Omega} \big(u(x) - f_\alpha(x)\big)^2 \dd x + \alpha |u|^2_{H^s}.
\end{equation}
then given $\theta \in \R \setminus \{0\}$ and any $\eps >0$ we have that
\begin{equation}\label{eq:halfhausdorff}\lim_{\alpha \to 0} \sup_{x \in \{u_\alpha > \theta \}} d\big(x, \{f \gs \theta - \eps\}\big) = 0, \text{ and }\, \lim_{\alpha \to 0} \sup_{x \in \{u_\alpha \ls \theta \}} d\big(x, \{f \ls \theta + \eps\}\big) = 0.
\end{equation}
Moreover, if for a.e.~$\theta \in \R \setminus \{0\}$ we have that $\{f > \theta\}$ satisfies inner and outer density estimates in the sense of \eqref{eq:densityestA}, then the Hausdorff convergence
\begin{equation}\label{eq:hausconvwithf1}d_H\big(\partial \{u_\alpha > \theta\}, \partial \{f > \theta\}\big) \xrightarrow[\alpha \to 0]{} 0\end{equation}
holds for a.e.~$\theta \in \R \setminus \{0\}$ for which
\begin{equation}\label{eq:notflat}\lim_{\eps\to 0} d_H\big(\{f \gs \theta - \eps\}, \{f > \theta\}\big) = 0.\end{equation}
\end{prop}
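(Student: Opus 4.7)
The plan is to closely parallel the proof of Theorem \ref{thm:hausmaxprinc}, replacing the indicatrix-specific avoidance and flipped-comparison lemmas with their continuous-data analogues, Lemmas \ref{lem:compalphaonef} and \ref{lem:compalphaoneDf}. First I would apply Lemma \ref{lem:scaling} with $\rho = \alpha^{1/2s}$ to reduce to regularization parameter $1$ on the expanded domain $\alpha^{-1/2s}\Omega$, writing $\u_\alpha(\hat x) := u_\alpha(\alpha^{1/2s}\hat x)$, $\f(\hat x) := f(\alpha^{1/2s}\hat x)$ and $\f_\alpha(\hat x) := f_\alpha(\alpha^{1/2s}\hat x)$. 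Two facts then drive the argument: the rescaled modulus $\omega_{\f}(R) = \omega_f(R\alpha^{1/2s})$ tends to $0$ for every fixed $R$ by uniform continuity of $f$ on $\overline\Omega$, while $\|\f_\alpha - \f\|_{L^\infty} = \|f_\alpha - f\|_{L^\infty} \to 0$ by \eqref{eq:linftyconv}.

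For the first assertion of \eqref{eq:halfhausdorff}, fix $\theta > 0$ (the case $\theta < 0$ being analogous upon a sign flip of $f$) and $\eps, \delta > 0$, and suppose $x_0 \in \Omega$ satisfies $d(x_0, \{f \geq \theta - \eps\}) > \delta$. Choose $R \geq R_0$ so that $C_B(R+1-R/2)^{-2s} < \eps/3$, then take $\alpha$ small enough that $\omega_{\f}(R) < \eps/12$, $\|\f_\alpha - \f\|_{L^\infty} < \eps/12$, and $\delta/\alpha^{1/2s} > R$, which ensures $\f < \theta - \eps$ throughout $B(\hat x_0, R)$ with $\hat x_0 = x_0/\alpha^{1/2s}$. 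In the interior subcase $d(x_0, \partial\Omega) > R\alpha^{1/2s}$, Lemma \ref{lem:compalphaonef} applied with $\otheta = \theta$, $\eta = \eps/6$ and $r = R/2$ gives $\u_\alpha(\hat x_0) < \theta$ directly. The boundary subcase uses the comparison $u_\alpha \leq \|f_\alpha\|_{L^\infty}\Xi_\alpha$ from Corollary \ref{cor:comppple} together with the $\mathcal{C}^{0,s}$ boundary regularity of $\Xi_\alpha$ supplied by Proposition \ref{prop:boundaryregularity}, yielding $u_\alpha < \theta$ throughout $\{d(\cdot, \partial\Omega) \leq R\alpha^{1/2s}\}$ for $\alpha$ small enough, exactly as in the passage producing \eqref{eq:spanishbound}. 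The second assertion of \eqref{eq:halfhausdorff} is proved symmetrically through Lemma \ref{lem:compalphaoneDf}: the assumption $f = 0$ on $\partial\Omega$ guarantees that $\{f > \theta + \eps\}$ sits at a positive distance from $\partial\Omega$, and choosing $\otheta = \theta + \eps/3$ together with Lemma \ref{lem:ctdecay} (which forces $1-\Xih(\hat x_0) < \eps/3$ once $\hat x_0$ is far from $\partial(\alpha^{-1/2s}\Omega)$) produces $\u_\alpha(\hat x_0) > \theta$.

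For the Hausdorff convergence \eqref{eq:hausconvwithf1}, the non-flatness condition \eqref{eq:notflat} combined with \eqref{eq:halfhausdorff} upgrades to $\sup_{x \in \{u_\alpha > \theta\}} d(x, \{f > \theta\}) \to 0$ and analogously for the sublevel sets, whence \cite[Prop.~2.6]{IglMer21} yields $\sup_{x \in \partial\{u_\alpha > \theta\}} d(x, \partial\{f > \theta\}) \to 0$. The reverse direction exploits that \eqref{eq:halfhausdorff}, combined with the uniform $L^\infty$ bound on $u_\alpha$ given by Lemma \ref{lem:threshold}, forces pointwise and hence $L^1$ convergence of superlevel sets for a.e.\ $\theta \neq 0$; together with the assumed inner and outer density estimates on $\{f > \theta\}$, this gives $\sup_{x \in \partial\{f > \theta\}} d(x, \partial\{u_\alpha > \theta\}) \to 0$ through \cite[Props.~2.2 and 2.6]{IglMer21}.

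The most delicate step will be the boundary subcase of the first assertion: inside the shrinking Dirichlet strip $\{d(\cdot, \partial\Omega) \leq R\alpha^{1/2s}\}$, neither Lemma \ref{lem:compalphaonef} nor a naive Hölder bound on $\u_\alpha$ applies cleanly, so one has to balance the $\alpha$-dependence of $\|\Xi_\alpha\|_{\mathcal{C}^{0,s}(\overline\Omega)}$ against the width of the strip, mimicking the subtle estimate \eqref{eq:spanishbound}. The passage from the piecewise constant setting to the continuous one is essentially painless there, since only the universal upper bound $u_\alpha \leq \|f_\alpha\|_{L^\infty}\Xi_\alpha$ is used in the boundary argument, while the full continuity of $f$ is absorbed by the modulus-of-continuity condition in \eqref{eq:boundconditionf} in the interior.
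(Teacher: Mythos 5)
Your proposal follows essentially the same route as the paper's proof: rescale by $\alpha^{1/2s}$, apply Lemmas \ref{lem:compalphaonef} and \ref{lem:compalphaoneDf} with the modulus-of-continuity and $L^\infty$-perturbation tolerances, control the boundary strip via the $\mathcal{C}^{0,s}$ estimate as in \eqref{eq:spanishbound} and via Lemma \ref{lem:ctdecay}, and conclude using \eqref{eq:notflat}, $L^1$ convergence of level sets and the density estimates. The only cosmetic differences are that the paper first reduces to $f \gs 0$ with $\max_\Omega f = 1$ by linearity and Lemma \ref{lem:threshold} (needed to apply the comparison lemmas as stated), and that for the sublevel sets it notes that $d_H\big(\{f \ls \theta+\eps\},\{f\ls\theta\}\big) \to 0$ holds unconditionally by nestedness (Lemma \ref{lem:nestedhausdorff}) rather than ``analogously'' to \eqref{eq:notflat}.
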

\begin{proof}
By linearity and taking into account Lemma \ref{lem:threshold}, we have that considering the positive part $f_\alpha^+$ as input leads to $u^+_{\alpha, 0}$, and $-f_\alpha^-$ to $-u^-_\alpha$. Therefore, without loss of generality we can assume $\theta >0$, $f\gs 0$, and prove just the first limit of \eqref{eq:halfhausdorff}. Moreover, since the result is not quantitative and the problem is linear, it is invariant by multiplication with a constant, and we can also assume $\max_\Omega f = 1$. With this in mind, our plan is to apply Lemmas \ref{lem:compalphaonef} and \ref{lem:compalphaoneDf}, which assume both upper and lower bounds on $R$, depending on the modulus of continuity of the function $\f$ that it is applied to. Here, we use $\f_\alpha(x) = f_\alpha(\alpha^{1/2s}x)$ and $\f(x) = f(\alpha^{1/2s}x)$, that is, with the same rescaling as applied to minimizers in Remark \ref{rem:scalingone} after Lemma \ref{lem:scaling}, and also in Theorem \ref{thm:hausmaxprinc}.

First, since $f \in\C(\overline{\Omega})$ it is uniformly continuous it has a modulus of continuity $\omega_f$, and by definition we have that
\[\omega_{\f}(R) = \omega_f(R\alpha^{1/2s}),\]
and for any fixed $R>0$ this quantity can be made as small as needed by reducing $\alpha$ further, ensuring in particular that for any given $\eta>0$, we can satisfy $\omega_{\f}(R) < \eta/2$. Moreover, we can also enforce $\|f_\alpha - f\|_{L^\infty} < \eta/2$, and this quantity is invariant to rescaling.

Now, if $x \in \{ f < \theta - \eps\}$ then we can apply Lemma \ref{lem:compalphaonef} with $\eta = (\theta - f(x))/2$ and $\otheta = \theta$ which gives us that for every $\eps> 0$
\begin{equation*}\begin{gathered}u_\alpha \ls \theta \text{ for all }x \in \{ f < \theta - \eps\} \text{ with }\max\left(d\big(x, \{ f \gs \theta - \eps \}\big),\,d(x, \partial \Omega)\right)>R\alpha^{1/2s},\\
\text{ where }R > R_\eps := \max\big( C \eps^{-1/2s} - C, R_0) > C(\theta-f(x))^{-1/2s}-C,\end{gathered}\end{equation*}
where $C$ is a constant that could be computed explicitly from \eqref{eq:boundconditionf}, directly implying that
\[\sup_{x \in \{u_\alpha > \theta \} \cap \{d(x, \partial \Omega)>R\alpha^{1/2s}\}} d\big(x, \{ f \gs \theta - \eps \}\big) \ls R \alpha^{1/2s}.\]
Here, we run into the same problem as in the proof of Theorem \ref{thm:hausmaxprinc}, because we need to guarantee that if $u_\alpha(x) \gs \theta$ then $d(x, \partial \Omega)>R\alpha^{1/2s}$. However, since we have assumed $f \in [0,1]$ we can use the estimate \eqref{eq:spanishbound} without modifications. Therefore, considering only $\alpha < \alpha_\theta$ we can remove the restriction and obtain the first limit in \eqref{eq:halfhausdorff}.

For the opposite inequality $u_\alpha > \theta$ which should hold for $x \in \{f > \theta + \eps\}$ away from the boundary of this set, as in Theorem \ref{thm:hausmaxprinc} we use the modified comparison Lemma \ref{lem:compalphaoneDf}, and Lemma \ref{lem:ctdecay} to quantify the boundary effect on the function $\Xi_\alpha$. First, notice that the condition $f = 0$ on $\partial \Omega$ implies that given $\theta$, there is $\delta_\theta$ such that $\omega_f(\delta_\theta) < \theta$, which implies
\begin{equation}\label{eq:fband}\{f>\theta\} \subset \{d(\cdot,\partial \Omega)>\delta_\theta\} =: \Omega_\theta.\end{equation}
In particular, we have $x \in \Omega_\theta$. Analogously to \eqref{eq:Xibound} we have now for all $y \in \Omega_\theta$ that
\begin{equation}\label{eq:Xiboundf}1-\Xi_\alpha(y)\ls C_B\big(1+\delta_\theta\,\alpha^{-1/2s}\big)^{-2s} := B_\theta(\alpha) \xrightarrow[\alpha \to 0]{} 0.\end{equation}
Note that $B_\theta(\cdot)$ depends only of $\theta$ and not on $x \in \{f>\theta + \eps\}$ and we may restrict $\alpha$ so that $B_\theta(\alpha) < (f(x)-\theta)/3$. To apply Lemma \ref{lem:compalphaoneDf}, we use the parameters $\otheta = \theta + (f(x)-\theta)/3$ and $\eta = (f(x)-\theta)/3$, additionally restrict $R$ so that
\[R>R_\eps':= \max( C'\eps^{-1/2s} - C', R_0 ) >C'(f(x)-\theta)^{-1/2s}-C',\]
where $C'$ is derived from \eqref{eq:flippedboundconditionf} and in general $C'\neq C$, to finally obtain
\[u_\alpha > \theta \, \text{ for all }x \in \{ f > \theta + \eps \} \text{ with }d\big(x, \{ f \ls \theta + \eps \}\big) > R\alpha^{1/2s},\]
and hence 
\[\sup_{x \in \{u_\alpha \ls \theta \}} d\big(x, \{ f \ls \theta + \eps \}\big) \ls R \alpha^{1/2s},\]
leading to the second limit in \eqref{eq:halfhausdorff}. 


Moreover, we have
\[d\big(x, \{ f > \theta \}\big) \ls d\big(x, \{ f \gs \theta - \eps \}\big) + d_H\big(\{ f \gs \theta - \eps \}, \{ f > \theta \}\big)\]
and analogously for $d\big(x, \{ f \ls \theta \}\big)$. If \eqref{eq:notflat} holds, then by diagonalization in $\alpha$ and $\epsilon$ we obtain
\begin{equation}\label{eq:quarterhausdorff1}\lim_{\alpha \to 0} \sup_{x \in \{u_\alpha > \theta \}} d\big(x, \{f > \theta \}\big) = 0.\end{equation}
On the other hand, because the $\{f \ls \theta + \eps\}$ are nested decreasing and closed (note that $f$ is continuous) and $\{f \ls \theta\}$ is their intersection, the limit $d_H\big(\{f \ls \theta + \eps\}, \{f \ls \theta\}\big) \xrightarrow[\eps \to 0]{} 0$ holds unconditionally as we show in Lemma \ref{lem:nestedhausdorff} below, so we also have
\begin{equation}\label{eq:quarterhausdorff2}\lim_{\alpha \to 0} \sup_{x \in \{u_\alpha \ls \theta \}} d\big(x, \{f \ls \theta \}\big) = 0.\end{equation}
Combining the limits \eqref{eq:quarterhausdorff1} and \eqref{eq:quarterhausdorff2} as in the proof of Theorem \ref{thm:hausmaxprinc} we have 
\[\sup_{x \in \partial \{u_\alpha > \theta \}} d\big(x, \partial \{f \gs \theta \}\big) \xrightarrow[\alpha \to 0]{} 0.\]

To complete the Hausdorff convergence of level sets $d_H\big(\partial \{u_\alpha > \theta \}, \partial \{ f > \theta \}\big) \to 0$ we also need the convergences
\[\sup_{x \in \partial \{ f > \theta \}} d\big(x, \partial \{u_\alpha > \theta \}\big) \xrightarrow[\alpha \to 0]{} 0,\]
and these are, as in Theorem \ref{thm:hausmaxprinc}, implied by combining the $L^1$ convergence of the level sets (that follows for a.e.~$\theta$ from $\|u_\alpha-f\|_{L^2} \to 0$) with the density estimates we have assumed for the level sets of $f$.
\end{proof}

\begin{remark}\label{eq:sard}
Condition \eqref{eq:notflat} is in some sense natural when trying to deduce convergence of level sets, since subsets where $f$ is constant are inherently unstable when applying any kind of smoothing to it. In particular, it also appears in \cite[Thm.~1.3]{IglMer21} for a similar result on $\TV$ regularization. A natural question arising from this condition is if it could fail for many levels of a given function simultaneously. In \cite[Ex.~5.10]{IglMer21} a function is constructed for which it fails for almost every $\theta$. This function belongs only to $\BV$ though, and here we deal with continuous data. On the other end of the spectrum, if $f \in \C^d$ (i.e. $d$ times continuously differentiable) the classical Morse-Sard theorem tells us that the set of critical values $\theta$ is of measure $0$ in $\R$, so \eqref{eq:notflat} holds for almost every $\theta$. In between these two extreme cases the situation is more intricate. In the recent work \cite[Thm.~1.2]{FerKorRov20} a Morse-Sard type result is proved in Bessel potential spaces, which on $\R^d$ (we may extend $f$ by zero since it's compactly supported) equal the Gagliardo-Slobodeckij spaces we consider here. Since $q>d/2s$ and assuming $s \in (1/2,1)$ this result can be applied to $u^\dag \in W^{2s, q}(\R^d)$, which is the best we could hope to obtain from the source condition (in fact such regularity is true only locally in $\Omega$, see \cite[Thm.~1.4 and Sec.~5]{BicWarZua17}), to conclude that for a.e.~$\theta$ the Hausdorff dimension of the set of critical points of $f$ with value $\theta$ is bounded above by $d-2s$. However, \eqref{eq:notflat} holds only when this set is empty, so it remains a genuine additional assumption that we cannot deduce from the regularization framework.
\end{remark}



Compared to the parameter choice in $L^q$ norm \eqref{eq:paramchoice} used in Theorem \ref{thm:hausmaxprinc}, the convergence \eqref{eq:linftyconv} assumed in the above result is in a different norm but without rate, so neither implies the other. Moreover, it is also not possible to fit plain denoising into Proposition \ref{prop:uniformconv}, since considering the identity map between $L^p(\Omega)$ and $L^2(\Omega)$ does not satisfy any of the assumptions on $A$. In any case, the result still holds with the noise and parameter conditions of Theorem \ref{thm:hausmaxprinc} and continuous limit:

\begin{cor}\label{cor:hausmaxprinc3}Let $\Omega$ be as in Proposition \ref{prop:hausmaxprinc2}, $f \in \C(\overline{\Omega})$ vanish on $\partial \Omega$, and $u_{\alpha, n}$ be the unique minimizer of 
\begin{equation}\label{eq:fracroff3}
u \mapsto \int_{\Omega} \big(u(x) - f(x)-n(x)\big)^2 \dd x + \alpha |u|^2_{H^s}.
\end{equation}
Then, if the parameter choice is such that
\begin{equation}\label{eq:paramchoiceagain}\frac{\|n\|_{L^q}}{\alpha} \to 0 \text{ for some } q > \frac{d}{2s},\end{equation}
then we have the same conclusions as in Proposition \ref{prop:hausmaxprinc2}.
\end{cor}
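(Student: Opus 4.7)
The plan is to obtain the result as a two-step reduction: treat the noiseless case by a direct appeal to Proposition \ref{prop:hausmaxprinc2} with a trivial choice of $f_\alpha$, then incorporate the noise as a small $L^\infty$ perturbation handled by the Stampacchia estimate.

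First I would let $u_{\alpha, 0}$ denote the unique minimizer of \eqref{eq:fracroff3} with $n = 0$. Taking $f_\alpha \equiv f$, the hypothesis \eqref{eq:linftyconv} of Proposition \ref{prop:hausmaxprinc2} is automatic, so its conclusions (both the $\eps$-statements and, under the extra density and non-flatness assumptions, the Hausdorff convergence of boundaries) hold directly for $u_{\alpha, 0}$.

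Next I would control the noise contribution $v_\alpha := u_{\alpha, n} - u_{\alpha, 0}$. By linearity of the Euler--Lagrange equation, $v_\alpha$ solves $(-\Delta)^s v_\alpha + v_\alpha/\alpha = n/\alpha$ in $\Omega$ with vanishing exterior data, exactly as in the proof of Theorem \ref{thm:hausmaxprinc}. Applying Proposition \ref{prop:boundedsol} to this equation together with the parameter choice \eqref{eq:paramchoiceagain} gives
\[\delta_\alpha := \|u_{\alpha, n} - u_{\alpha, 0}\|_{L^\infty(\Omega)} \ls C_S(\Omega, q, d, s)\left(\frac{\|n\|_{L^q}}{\alpha}\right)^{\!2d/(d-2s)} \xrightarrow[\alpha \to 0]{} 0.\]

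To conclude, I would fix $\theta \neq 0$ (assume $\theta > 0$ by splitting into positive and negative parts as in Proposition \ref{prop:hausmaxprinc2}) and $\eps > 0$ with $\eps < 2\theta$. Restricting to $\alpha$ small enough that $\delta_\alpha < \eps/2$, the inclusions $\{u_{\alpha, n} > \theta\} \subset \{u_{\alpha, 0} > \theta - \eps/2\}$ and $\{u_{\alpha, n} \ls \theta\} \subset \{u_{\alpha, 0} \ls \theta + \eps/2\}$ combined with \eqref{eq:halfhausdorff} applied to $u_{\alpha, 0}$ at levels $\theta \mp \eps/2$ and tolerance $\eps/2$ yield
\[\sup_{x \in \{u_{\alpha, n} > \theta\}} d\big(x, \{f \gs \theta - \eps\}\big) \ls \sup_{x \in \{u_{\alpha, 0} > \theta - \eps/2\}} d\big(x, \{f \gs \theta - \eps\}\big) \xrightarrow[\alpha \to 0]{} 0,\]
and analogously for the other half, proving the first conclusion. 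For the full Hausdorff convergence of boundaries, the density and non-flatness hypotheses combine with these $\eps$-estimates and \eqref{eq:notflat} to give $\sup_{x \in \partial \{u_{\alpha, n} > \theta\}} d(x, \partial \{f > \theta\}) \to 0$. The remaining half follows as in Proposition \ref{prop:hausmaxprinc2} from $L^1$ convergence of $\{u_{\alpha, n} > \theta\}$ to $\{f > \theta\}$ for a.e.~$\theta$, itself following from $L^2$ convergence $u_{\alpha, n} \to f$: this splits into the $L^\infty$ bound on $v_\alpha$ and the noiseless convergence $u_{\alpha, 0} \to f$ in $L^2$, the latter obtained by testing minimality of $u_{\alpha, 0}$ against $\C_c^\infty(\Omega) \subset H^s_0(\overline{\Omega})$ approximations of $f$, which are $L^2$-dense since $f \in \C(\overline{\Omega})$ vanishes on $\partial \Omega$. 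The only subtlety is the $\eps/2$--$\eps/2$ sandwich that replaces the moving levels $\theta \pm \delta_\alpha$ by fixed ones, and there is no serious obstacle here since the heavy lifting is done by Proposition \ref{prop:hausmaxprinc2}.
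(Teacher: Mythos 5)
Your proposal is correct and follows essentially the same route as the paper: apply Proposition \ref{prop:hausmaxprinc2} to the noiseless problem with $f_\alpha \equiv f$, control $u_{\alpha,n}-u_{\alpha,0}$ in $L^\infty$ via Proposition \ref{prop:boundedsol} and the parameter choice, and then transfer the conclusions by an $\eps/2$ shift of the level $\theta$. The only cosmetic difference is that you write the set inclusions in the contrapositive direction to the paper's, which changes nothing.
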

\begin{proof}
As in Theorem \ref{thm:hausmaxprinc}, we can control the effect of the noise in $L^\infty$ norm, that is, we can obtain
\[\|u_{\alpha, n} - u_{\alpha, 0}\|_{L^\infty(\Omega)} \ls C_S(\Omega, q, d, s) \left(\frac{\|n\|_{L^q(\Omega)}}{\alpha}\right)^{2d/(d-2s)} \text{ for each } q > \frac{d}{2s}.\]
so by reducing $\alpha$, for any given $\eps >0$ we can enforce
\begin{equation}\|u_{\alpha, n} - u_{\alpha, 0}\|_{L^\infty(\Omega)} \ls \frac{\eps}{2}.\label{eq:smallnoise}\end{equation}
For the rest of the proof, we apply first Proposition \ref{prop:hausmaxprinc2} to the result of the denoising problem
\[u \mapsto \int_{\Omega} \big(u(x) - f(x)\big)^2 \dd x + \alpha |u|^2_{H^s}.\]
(so with $f_\alpha = f$ for all $\alpha$) which directly implies
\begin{equation}\lim_{\alpha \to 0} \sup_{x \in \{u_{\alpha, 0} > \theta \}} d\left(x, \left\{f \gs \theta - \frac{\eps}{2}\right\}\right) = 0, \text{ and } \lim_{\alpha \to 0} \sup_{x \in \{u_{\alpha, 0} \ls \theta \}} d\left(x, \left\{f \ls \theta + \frac{\eps}{2}\right\}\right) = 0.
\end{equation}
Now, we only need to note that \eqref{eq:smallnoise} also implies
\[ \{u_{\alpha, 0} > \theta \} \supset \left\{u_{\alpha, n} \gs \theta + \frac{\eps}{2} \right\} \quad \text{and} \quad  \{u_{\alpha, 0} \ls \theta \} \supset \left\{u_{\alpha, n} \ls \theta - \frac{\eps}{2} \right\}.\]
This last inclusion of sets implies then, for any fixed $\theta$ and $\eps$,
\begin{align*}&\lim_{\alpha \to 0} \sup_{x \in \{u_{\alpha, n} > \theta + \frac{\eps}{2} \}} d\left(x, \left\{f \gs \theta - \frac{\eps}{2}\right\}\right) = 0 \text{ and }\\ &\lim_{\alpha \to 0} \sup_{x \in \{u_{\alpha, n} \ls \theta - \frac{\eps}{2} \}} d\left(x, \left\{f \ls \theta + \frac{\eps}{2}\right\}\right) = 0,
\end{align*}
which changing $\theta$ into $\theta \pm \eps/2$ provides
\[\lim_{\alpha \to 0} \sup_{x \in \{u_{\alpha,n} > \theta \}} d\big(x, \{f \gs \theta - \eps\}\big) = 0, \text{ and } \lim_{\alpha \to 0} \sup_{x \in \{u_{\alpha,n} \ls \theta \}} d\big(x, \{f \ls \theta + \eps\}\big) = 0 \]
which is \eqref{eq:halfhausdorff} and allows to conclude similarly as in Proposition \ref{prop:hausmaxprinc2}.
\end{proof}

Let us now state the lemma for nested sets used in the proof of Proposition \ref{prop:hausmaxprinc2}.

\begin{lemma}\label{lem:nestedhausdorff}
Let $E_k \subset \R^d$ be nonempty, closed, bounded, and nested decreasing, that is $E_{k+1}\subset E_k$ for all $k\gs 1$. Then denoting 
\[E := \bigcap_{k=1}^\infty E_k, \text{ we have }d_H(E_k, E)\xrightarrow[k \to \infty]{} 0.\]
\end{lemma}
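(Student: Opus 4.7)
The plan is to reduce Hausdorff convergence to a one-sided statement and then argue by contradiction using compactness. Since $E \subset E_k$ for all $k$, one half of the Hausdorff distance is trivial: $\sup_{y \in E} d(y, E_k) = 0$. Therefore the only thing to show is
\[\sup_{x \in E_k} d(x, E) \xrightarrow[k \to \infty]{} 0.\]
Before doing so, I would first check that $E$ is nonempty (so that $d(\cdot, E)$ is finite-valued), which follows from the fact that each $E_k$ is closed and bounded in $\R^d$ and hence compact, so the nested sequence of nonempty compacta has nonempty intersection by the finite intersection property.

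Next I would proceed by contradiction: suppose the supremum above does not tend to zero. Then there is some $\eps > 0$, a subsequence $(k_j)_{j\gs 1}$, and points $x_{k_j} \in E_{k_j}$ with $d(x_{k_j}, E) \gs \eps$ for every $j$. Because $E_{k_j} \subset E_1$ and $E_1$ is bounded, the sequence $(x_{k_j})$ is bounded in $\R^d$; Bolzano--Weierstrass then yields a further subsequence (still denoted $x_{k_j}$) converging to some $x_\infty \in \R^d$.

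The key step is to show $x_\infty \in E$. Fix an arbitrary $k \gs 1$. For all $j$ large enough that $k_j \gs k$, the nested inclusion gives $x_{k_j} \in E_{k_j} \subset E_k$. Since $E_k$ is closed, the limit point $x_\infty$ belongs to $E_k$. As $k$ was arbitrary, $x_\infty \in \bigcap_k E_k = E$. But then by continuity of $d(\cdot, E)$,
\[d(x_\infty, E) = \lim_{j \to \infty} d(x_{k_j}, E) \gs \eps > 0,\]
contradicting $x_\infty \in E$. This contradiction establishes the desired convergence.

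I do not expect any real obstacle here; the proof is essentially a soft argument combining closedness, nestedness, and the Heine--Borel property of $\R^d$. The only small subtlety to watch is the distinction between the two suprema defining $d_H$ in \eqref{eq:hausdorffdist} so that one argues in the correct direction, together with the preliminary verification that $E \neq \emptyset$ (without which $d(x,E)=+\infty$ would make the statement vacuously false).
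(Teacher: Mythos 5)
Your proof is correct and follows essentially the same route as the paper's: the trivial half from $E\subset E_k$, then a contradiction argument combining boundedness, closedness, and nestedness to produce a limit point lying in $E$ yet at positive distance from $E$. Your version is in fact marginally cleaner, since a single application of Bolzano--Weierstrass suffices (the paper invokes a diagonalization that is not really needed, as you observe that the limit of one convergent subsequence already lies in every $E_k$ by closedness), and you also make explicit the nonemptiness of $E$, which the paper leaves implicit.
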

\begin{proof}
We start with the definition
\[d_H(E_k, E) = \max \left( \sup_{x \in E_k} d(x,E)\,,\,\sup_{x \in E} d(x,E_k)\right).\]
Since $E$ is the intersection of all $E_k$, the second argument in the maximum is zero. Assume now for a contradiction that there is $\delta > 0$
\[x_k \in E_k \text{ with }d(x_k, E) > \delta \text{ for all }k.\]
Now, since the $E_k$ are nested we have that for any fixed $k_0$, then $x_{k} \in E_{k_0}$ for all $k \gs k_0$. Since $E_{k_0}$ is bounded and closed, up to a subsequence $x_k$ converges to some point in $E_{k_0}$. But since this is the case for any $k_0$, we could by diagonalization (that is, recursively) find a subsequence converging to some $x_0$ with $x_0 \in E_k$ for all $k\gs 1$. Then $x_0 \in E$, but also $d(x_0,E)\gs \delta$, which is impossible.
\end{proof}



\section*{Acknowledgments} A large portion of this work was completed while the first-named author was employed at the Johann Radon Institute for Computational and Applied Mathematics (RICAM) of the Austrian Academy of Sciences (\"OAW), during which his work was partially supported by the State of Upper Austria.

\bibliographystyle{plain}
\bibliography{conv_fractional_laplacian}
\end{document}